\documentclass{article}

\usepackage{microtype}
\usepackage{graphicx}
\usepackage{subcaption} 
\usepackage{booktabs}   
\usepackage{hyperref}   

\usepackage[accepted]{icml2026}

\usepackage{amsmath}
\usepackage{amssymb}
\usepackage{mathtools}
\usepackage{amsthm}
\usepackage{amsfonts}
\usepackage{bm}            
\usepackage{nicefrac}      
\usepackage{xcolor}        
\usepackage{url}

\usepackage{multirow}
\usepackage{makecell}

\usepackage{threeparttable}
\usepackage{array}
\usepackage{tabularx}

\usepackage{xspace}
\usepackage{enumerate}
\usepackage[inline]{enumitem} 
\usepackage{bbold}
\usepackage{epstopdf}
\usepackage{rotating}
\usepackage{mathrsfs}
\usepackage[normalem]{ulem} 
\usepackage{physics}        

\usepackage{algorithm}
\usepackage{algorithmic}

\allowdisplaybreaks

\newcommand*{\colorboxedAux}[3]{%
  \begingroup
    \colorlet{cb@saved}{.}%
    \color#1{#2}%
    \boxed{%
      \color{cb@saved}%
      #3%
    }%
  \endgroup
}
\newcommand*{\colorboxed}{}
\def\colorboxed#1#{%
  \colorboxedAux{#1}%
}

\renewcommand{\algorithmicrequire}{\textbf{Input:}}
\renewcommand{\algorithmicensure}{\textbf{Initial values:}}
\newcommand{\rgrad}{\mathrm{grad}}
\newcommand{\Log}{\mathrm{Log}}
\newcommand{\pert}{\nu}

\DeclareMathOperator{\sign}{sign}

\usepackage[capitalize,noabbrev]{cleveref}

\theoremstyle{plain}
\newtheorem{theorem}{Theorem}[section]
\newtheorem{proposition}[theorem]{Proposition}
\newtheorem{lemma}[theorem]{Lemma}

\theoremstyle{definition}
\newtheorem{definition}[theorem]{Definition}
\newtheorem{assumption}[theorem]{Assumption}
\theoremstyle{remark}
\newtheorem{remark}[theorem]{Remark}

\usepackage[textsize=tiny]{todonotes}

\icmltitlerunning{Riemannian Dueling Optimization}

\begin{document}

\twocolumn[
  \icmltitle{Riemannian Dueling Optimization}

  \icmlsetsymbol{equal}{*}

  \begin{icmlauthorlist}
    \icmlauthor{Yuxuan Ren}{rice}
    \icmlauthor{Abhishek Roy}{tamu}
    \icmlauthor{Shiqian Ma}{rice}
  \end{icmlauthorlist}

  \icmlaffiliation{rice}{Department of Computational Applied Mathematics and Operations Research, Rice University, Houston, TX, USA}
  \icmlaffiliation{tamu}{Department of Statistics, Texas A\&M University, College Station, TX, USA}

  \icmlcorrespondingauthor{Yuxuan Ren}{yr21@rice.edu}
  \icmlcorrespondingauthor{Abhishek Roy}{abhishekroy@tamu.edu}
  \icmlcorrespondingauthor{Shiqian Ma}{sqma@rice.edu}

  \icmlkeywords{Machine Learning, ICML}

  \vskip 0.3in
]

\printAffiliationsAndNotice{}

\begin{abstract}
Dueling optimization considers optimizing an objective with access to only a comparison oracle of the objective function. It finds important applications in emerging fields such as recommendation systems and robotics. Existing works on dueling optimization mainly focused on unconstrained problems in the Euclidean space. In this work, we study dueling optimization over Riemannian manifolds, which covers important applications that cannot be solved by existing dueling optimization algorithms. In particular, we propose a Riemannian Dueling Normalized Gradient Descent (RDNGD) method and establish its iteration complexity when the objective function is geodesically $L$-smooth or geodesically (strongly) convex. We also propose a projection-free algorithm, named Riemannian Dueling Frank–Wolfe (RDFW) method, to deal with the situation where projection is prohibited. We establish the iteration and oracle complexities for RDFW. We illustrate the effectiveness of the proposed algorithms through numerical experiments on both synthetic and real applications. 
\end{abstract}

\section{Introduction}
Many modern learning tasks involve settings where gradients or even function values are inaccessible, 
and the only feasible feedback comes in the form of pairwise preferences or comparison -- often referred to as \emph{dueling feedback}. 
For example, users in recommendation systems typically express relative judgments such as 
``item A is preferred over item B''; in robotics, human supervisors often compare trajectories rather than assign scalar rewards \citep{yang2025trajectory,jain2013learning}; 
and in representation learning, a downstream classifier may indicate that one projection preserves class structure better than another \citep{ventocilla2020comparative,morariu2021dumbledr}. 
Importantly, in many of these applications the decision space itself is inherently non-Euclidean: recommendation models often rely on hierarchical embeddings in hyperbolic space \citep{chamberlain2019scalable,shimizu2024fashion}, 
trajectory optimization in robotics often requires optimizing over special orthogonal groups $\mathrm{SO}(3)$ \citep{watterson2018trajectory}, and projection matrices for representation learning are naturally constrained to the Stiefel manifold \citep{boumal2023intro}. In addition, solutions to high-dimensional learning problems frequently lie on low-dimensional manifolds \citep{garipov2018loss,hu2022lora}. Furthermore, several preference elicitation frameworks require constrained optimization where feasible solutions belong to constraint sets such as sphere, and simplex. These problems can also be reformulated as optimization over a manifold \citep{boumal2023intro}. Classical problems such as the Karcher mean problem where the feedback is provided by human and they are not aware of the loss can also be formulated as a Riemannian dueling optimization problem. Motivated by these applications, we study the Riemannian dueling optimization problem in the following form:
\begin{equation} 
\min_{x\in\mathcal{X}\subseteq\mathcal{M}} \quad f(x), 
\label{eq:target1.1}
\end{equation}
where $\mathcal{M}$ is a Riemannian manifold with dimension $d$, and $f:\mathcal{X}\subseteq\mathcal{M}\to\mathbb{R}\cup\{\infty\}$ is a smooth function. Let $x^*$ denote the minimizer of \eqref{eq:target1.1}, and $f^* := f(x^*)$. We assume that we have access to only a pairwise comparison oracle $\mathcal{Q}_f(x,y)$ between two queried points $x,y\in\mathcal{M}$ given by 
\begin{align}\label{eq:oracle}
    \mathcal{Q}_f(x,y)=2*\mathbb{1}(f(x)>f(y))-1,
\end{align}
where $\mathbb{1}(\cdot)$ is the indicator function, which equals 1 if the statement is true and 0 otherwise. Note that we do not have access to the function values $f(x)$ and $f(y)$.

\subsection{Review of Euclidean Dueling Optimization}


In this subsection, we briefly review the literature on Euclidean dueling optimization. \cite{lobanov2024acceleration} and \cite{chervonenkis2024nesterov} proposed coordinate descent algorithms for dueling optimization. \cite{bergou2020stochastic} introduced a stochastic three points method which is a direct search algorithm based on comparison oracles. Variants of this method were further studied in \citep{boucherouite2024minibatch,kadi2025almost,tkachenkothree}. These methods typically rely on identifying the best candidate among the current iterate and its neighbors, which requires multiple calls to the comparison oracle in each iteration. The main difficulty in dueling optimization is how to efficiently estimate the gradient direction using comparison oracle, and recent advances have explored sophisticated techniques for explicit gradient approximation. \cite{Cai_2022} proposed one-bit and comparison-based gradient estimators; \cite{tang2024zerothorder} proposed to estimate gradient using ranking oracles; \cite{zhang2024comparisons} proposed a binary search method to compute the gradient estimator using comparison oracle. In our work, we extend the two-point random perturbation approach for gradient estimation proposed by \citet{pmlr-v139-saha21b} to the Riemannian setting, as it requires only a single random direction and one comparison per iteration. However, extending this framework is far from a trivial generalization, as the presence of curvature breaks standard Euclidean trigonometry and linearization.

\subsection{Motivating Examples}\label{sec:example}

We now discuss two motivating examples for Riemannian dueling optimization. 

{\bf Attack on Deep Neural Network.}
Attacks on deep neural networks (DNNs) aim to introduce small, human-imperceptible perturbations to an input so that the model produces an incorrect label. In realistic black-box scenarios, the attacker typically has only limited query access to the model and therefore cannot rely on gradient information. Moreover, the attacker may not even have access to a clean or reliable loss function. For example, prediction APIs often return only labels or heavily processed scores \citep{xie2018, s.2018stochastic, athalye2018obfuscatedgradientsfalsesense}, so function values are either unavailable or not reliable. In such cases, it is more natural to ask which of two perturbed images is more adversarial, and to use these pairwise preferences to drive the optimization. 
Furthermore, natural images are widely believed to concentrate near low-dimensional manifolds \citep{ws04}. 
Similar to the optimal $\ell_2$-norm attack framework established in \citep{lyu2015unifiedgradientregularizationfamily} and further explored in \citep{li2023stochastic}, we model the constraint manifold $\mathcal{M}$ as a sphere for simplicity. 
This leads to the following dueling optimization in the form of \eqref{eq:target1.1}:
\begin{equation*}
    \max_{x \in \mathcal{M}} \quad f(x) := \mathcal{L}_{w}(x_0 + x),
\end{equation*}
where $\mathcal{L}_w$ denotes the loss function of a DNN with weights $w$, $x_0$ is the image to attack, and the decision variable $x$ represents the adversarial perturbation. 



{\bf Horizon Leveling.}
Horizon leveling is a fundamental task in computer vision, where the goal is to estimate and correct the image horizon. It plays a key role in applications such as single-image camera calibration \citep{HoldGeoffroy2018} and visual navigation \citep{Ettinger2002}. 
Formally, the horizon tilt in an image can be represented by a rotation matrix $R_{\text{tilt}} \in \mathrm{SO}(2)$, where a perfectly level horizon corresponds to the identity matrix $I$. 
In many realistic settings, however, an explicit loss function or ground-truth labels may be unavailable. Instead, we observe only pairwise comparisons from the loss function $f(R)$ indicating which of two rotated versions appears more level. 
This leads naturally to the following Riemannian dueling optimization problem over $\mathrm{SO}(2)$, where the goal is to find the correction $R^\star$ that best compensates for the observed tilt $R_{\text{tilt}}$.
\begin{equation*}
    \min_{R \in \mathrm{SO}(2)} f(R).
\end{equation*}
\begin{remark}
Our applications illustrate distinct feedback sources: machine-generated adversariality (DNN attacks) versus human preferences (horizon leveling). This demonstrates that our framework is agnostic to the comparison source, unifying both model-based and human-based objectives within a single scheme.
\end{remark}

\subsection{Contributions}
In this paper, we introduce Riemannian dueling optimization, where only a comparison oracle of the objective function is available, and thus classical Riemannian optimization algorithms fail. Our main contributions are as follows.
\begin{itemize}[leftmargin=*]
    \item We propose the first Riemannian dueling normalized gradient descent method (RDNGD) for Riemannian dueling optimization and provide its iteration and oracle complexities for geodesically $L$-smooth objectives, as well as for geodesically $L$-smooth and (strongly) geodesically convex objectives.
    \item We propose a Riemannian dueling Frank-Wolfe method (RDFW), which is projection-free, to handle cases where projection is computationally prohibitive. 
    We establish iteration and oracle complexities of RDFW in the geodesically convex setting, which is the first convergence result for projection-free dueling optimization over manifolds.
\end{itemize}
Overall, this work bridges two active areas, preference-based optimization and Riemannian optimization, by showing how optimization can proceed reliably on manifolds when only comparison oracles are available. Our main results are summarized in Table~\ref{tab:cases}. 
Our \texttt{RDNGD} and \texttt{RRDNGD} algorithms can be viewed as Riemannian generalizations of the NGD methods proposed in \citep{pmlr-v139-saha21b}. While we employ a similar gradient direction estimator, our contribution extends significantly beyond a trivial generalization to the Riemannian setting. In fact, when reducing to the Euclidean setting, we also provide better results comparing with the results in \citep{pmlr-v139-saha21b}, and we will specify them when we present the specific results. 

\begin{table*}[t]
\centering
\caption{Comparison of the proposed algorithms with existing methods.}
\label{tab:cases}
\resizebox{\textwidth}{!}{%
    \renewcommand{\arraystretch}{1.35}
    \begin{tabular}{cccccc} 
    \toprule
    \textbf{Algorithm} &
    \textbf{\makecell{Requirements \\ on $f$}} & 
    \textbf{\makecell{Requirements \\ on $\mathcal{
    X}$}} & 
    \textbf{\makecell{Constraint \\ Handling}} &
    \textbf{\makecell{Iteration \\ Complexity}} &
    \textbf{\makecell{Oracle \\ Complexity}} \\
    \midrule

    \textbf{RDNGD (nonconvex)} &
    g-$L$-smooth &
    Unconstrained ($\mathcal{X} = \mathcal{M}$) &
    / &
    $\mathcal{O}(d\epsilon^{-2})$ &
    $\mathcal{O}(d\epsilon^{-2})$
    \\
    \midrule

    \textbf{RDNGD (convex)} &
    \begin{tabular}{c}
    g-$L$-smooth,
    g-convex
    \end{tabular} &
    Geodesically uniquely convex &
    Projection &
    $\mathcal{O}(d\epsilon^{-1})$ &
    $\mathcal{O}(d\epsilon^{-1})$
    \\
    \midrule

    \textbf{RRDNGD} &
    \begin{tabular}{c}
    g-$L$-smooth, 
    strongly g-convex
    \end{tabular} &
    Geodesically uniquely convex &
    Projection &
    $\mathcal{O}(d\log(1/\epsilon))$ &
    $\mathcal{O}(d\log(1/\epsilon))$
    \\
    \midrule

    \textbf{RDFW} &
    \begin{tabular}{c}
    g-$L$-smooth, 
    g-convex
    \end{tabular} &
    Geodesically uniquely convex &
    LMO &
    $\mathcal{O}(\epsilon^{-1})$ &
    $\mathcal{O}(d\epsilon^{-2})$
    \\
    \midrule
    \midrule

    \textbf{$\beta$-NGD }\citep{pmlr-v139-saha21b} &
    \begin{tabular}{c}
    $L$-smooth,
    convex
    \end{tabular} &
    Unconstrained &
    / &
    $\mathcal{O}(d\epsilon^{-1})$ &
    $\mathcal{O}(d\epsilon^{-1})$
    \\
    \midrule

    \textbf{$(\alpha,\beta)$-NGD }\citep{pmlr-v139-saha21b} &
    \begin{tabular}{c}
    $L$-smooth,
    strongly convex
    \end{tabular} &
    Unconstrained &
    / &
    $\mathcal{O}(d\log(1/\epsilon))$ &
    $\mathcal{O}(d\log(1/\epsilon))$
    \\
    \bottomrule
    \end{tabular}%
}
\end{table*}

\section{Preliminaries}
We consider smooth Riemannian manifold $\mathcal{M}$ equipped with a Riemannian metric $g$. For any point $x \in \mathcal{M}$, the metric induces an inner product on the tangent space $\mathrm{T}_x \mathcal{M}$, denoted as $\langle \cdot, \cdot \rangle_x$, and an associated norm $\|v\|_x := \sqrt{\langle v, v \rangle_x}$. The Riemannian gradient $\mathrm{grad} f(x) \in \mathrm{T}_x \mathcal{M}$ is uniquely defined as the tangent vector satisfying $\langle \mathrm{grad} f(x), v \rangle_x = \mathrm{D}f(x)[v]$ for all $v \in \mathrm{T}_x \mathcal{M}$, where $\mathrm{D}f(x)[\cdot]$ denotes the differential. To generalize the concept of straight lines in Euclidean space to the manifold, we use geodesics defined as curves that locally minimize distance between two points. The exponential map, $\mathrm{Exp}_x: \mathrm{T}_x \mathcal{M} \to \mathcal{M}$, maps a tangent vector $v$ to $y = \mathrm{Exp}_x(v)$ by following the geodesic starting at $x$ with velocity $v$ for unit time. Conversely, the logarithmic map $\mathrm{Log}_x: \mathcal{M} \to \mathrm{T}_x \mathcal{M}$ is defined as the local inverse of the exponential map, mapping a point $y$ on the manifold back to the tangent vector $v = \mathrm{Log}_x(y)$ such that the geodesic distance between $x$ and $y$ is given by $\|v\|_x$. We use $\Gamma^x_y$ to denote the parallel transport operator that moves a vector along a geodesic while preserving its norm and direction from $\mathrm{T}_y \mathcal{M}$ to $\mathrm{T}_x \mathcal{M}$. 
We refer the reader to Appendix \ref{appendix:prelim} for formal definitions and detailed properties of these concepts.

Now we present some definitions and assumptions. 

\begin{definition}[Geodesic $L$-smoothness]\label{as:lipgrad}
A differentiable function $f : \mathcal{M} \to \mathbb{R}$ is said to be geodesically $L$-smooth if $\rgrad f(x)$ is $L$-Lipschitz, i.e. for any $x, y \in \mathcal{M}$,
\begin{equation*}
\|\rgrad f(x) - \Gamma^x_y \rgrad f(y)\|_x \leq L d(x, y).
\end{equation*}
This also implies, 
\[    |f(y) - f(x) - \langle \rgrad f(x), \Log_x(y) \rangle_x| \leq \frac{L}{2} d^2(x, y).
\]
\end{definition}

\begin{definition}[Geodesically uniquely convex set]\label{def:geodesically_uniquelly_convex_set} 
We call $\mathcal{X}\subseteq \mathcal{M}$ a geodesically uniquely convex set if for any $x,y\in \mathcal{X}$, there exists a unique geodesic $\gamma$ such that 
\begin{align*}
\gamma(0) = x, \quad 
\gamma(1) = y, \quad 
\text{and }~ \gamma(t)\in \mathcal{X}~\text{for all } t\in[0,1].
\end{align*}
\end{definition}

\begin{definition}[Geodesic (strong) convexity]\label{as:strongcon}
Let $\mathcal{X}$ be a geodesically uniquely convex set. The function $f : \mathcal{X}\subseteq\mathcal{M} \to \mathbb{R}$ is geodesically $\alpha$-strongly convex, i.e., for any $x, y \in \mathcal{X}$, it holds that
\begin{equation}\label{geo-convex-inequality}
f(y) \geq f(x) + \langle \rgrad f(x), \Log_x(y) \rangle_x + \frac{\alpha}{2} d^2(x, y).
\end{equation}
If $f$ satisfies \eqref{geo-convex-inequality} with $\alpha=0$, then $f$ is geodesically convex.
\end{definition}
\begin{remark}
We adopt the notion of the geodesically uniquely convex set $\mathcal{X}$ \citep{pmlr-v162-kim22k} to address the ambiguity associated with the possible non-uniqueness of geodesics. 
This relaxes the global property to a local property, enabling a rigorous definition of convexity. This also ensures that $\mathrm{Exp}_x$ is a diffeomorphism for any $x$ in the geodesically uniquely convex set, and thus ensures that $\Log_x(y)=\mathrm{Exp}^{-1}_x(y)$ is well-defined.

\begin{assumption}\label{as:curvature_lb}
The sectional curvature of $\mathcal{M}$ is bounded below by $\kappa$.
\end{assumption}

\begin{assumption}[Nonexpansive projection]\label{as:proj}
We assume that the set $\mathcal{X}$ is geodesically uniquely convex, and the projection oracle $\mathcal{P}_{\mathcal{X}}:\mathcal{M}\rightarrow \mathcal{X}$ defined as
$
\mathcal{P}_{\mathcal{X}}(x) := \{ y \in \mathcal{X} : d(x, y) = \inf_{z \in \mathcal{X}} d(x, z) \},
$ is nonexpansive, i.e., $d(\mathcal{P}_{\mathcal{X}}(x), \mathcal{P}_{\mathcal{X}}(y)) \leq d(x, y)$ for all $x,y\in\mathcal{M}$. 
\end{assumption}
\end{remark}

\section{Riemannian Dueling Normalized Gradient Descent Method}\label{sec:section3}
In this section, we present our Riemannian Dueling Normalized Gradient Descent (RDNGD) method for solving \eqref{eq:target1.1}. For ease of notation, we use $\langle\cdot,\cdot\rangle$ to denote the inner product $\langle\cdot,\cdot\rangle_x$ on $\mathrm{T}_x\mathcal{M}$ and $\|\cdot\|$ to denote $\|\cdot\|_x$, when there is no ambiguity. For a tangent space $\mathrm{T}_x\mathcal{M}$ with an orthonormal basis $\{e_1,e_2,\ldots e_d\}$, any $v\in \mathrm{T}_x\mathcal{M}$ can be expressed as $v=\sum_{i=1}^d v_i e_i$ for some $v_1,v_2,\ldots,v_d\in\mathbb{R}$. When the choice of basis is clear from context, we represent the tangent vector by its coefficient vector $ v := [v_1, v_2, \ldots, v_d]^\top$.
For a given point $x\in\mathcal{M}$, we use $\mathcal{S}_{\mathrm{T}_x\mathcal{M}}(r)$ to denote the sphere with center $x$ and radius $r$ on $\mathrm{T}_x\mathcal{M}$. For a set $S$, let $\mathrm{Unif}(S)$ represent the uniform distribution on $S$. Sampling a tangent vector uniformly from $\mathcal{S}_{\mathrm{T}_x\mathcal{M}}(1)$ is equivalent with sampling the coefficient vector $u$ uniformly from $\mathcal{S}_d(1):=\{x\in \mathbb{R}^d:\|x\|_2=1\}$. 

We now introduce our Riemannian gradient direction estimator using dueling oracles $\mathcal{Q}_f(x,y)$. Although such estimators have been proposed for the Euclidean space \cite{zhang2024comparisons,pmlr-v139-saha21b}, none of them applies to the Riemannian setting directly. 
Our Riemannian gradient direction estimator is defined as:
\begin{align}\label{eq:graddirestimator}
h_\pert(x)=\mathcal{Q}_f(\mathrm{Exp}_x(\pert u), \mathrm{Exp}_x( -\pert u))u,
\end{align}
where $\pert>0$ is the perturbation radius and $u\sim \mathrm{Unif}(\mathcal{S}_{\mathrm{T}_x\mathcal{M}}(1))$. The exponential map ensures that the perturbed points stay on $\mathcal{M}$. 
$u\sim \mathrm{Unif}(\mathcal{S}_{\mathrm{T}_x\mathcal{M}}(1))$ is sampled by projecting a random Gaussian vector onto the tangent space  followed by normalization. 
The following results establish the relation between $h_\nu(x)$ in \eqref{eq:graddirestimator} and the normalized gradient. In Lemma~\ref{lem:grad_with_bias} we first show that $h_\pert(x)$ coincides with $\operatorname{sign}(\langle \rgrad f(x),u\rangle_x)u$  with high probability.

\begin{lemma}\label{lem:grad_with_bias}
Assume $f$ is geodesically $L$-smooth. Let $u \sim \mathrm{Unif(\mathcal{S}_{\mathrm{T}_x\mathcal{M}}(1))}$, and $\pert \in (0,1)$. Then with probability at least $1 - \gamma_x$, we have 
\begin{equation*}
h_{\pert}(x)
= \operatorname{sign}(\langle\rgrad f(x), u\rangle_x)  u,
\end{equation*}
where $h_{\pert}(x)$ is defined in \eqref{eq:graddirestimator} and 
\begin{equation}\label{def-gamma_x}
\gamma_x = \sqrt{\frac{d}{2\pi}}\frac{{L}\pert}{\norm{\rgrad f(x)}}
\end{equation}
\end{lemma}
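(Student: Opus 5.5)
The plan is to compare the sign of $f(\mathrm{Exp}_x(\pert u)) - f(\mathrm{Exp}_x(-\pert u))$ with the sign of its first-order approximation $2\pert\langle \rgrad f(x),u\rangle_x$, and to show that the two can disagree only when $|\langle \rgrad f(x),u\rangle_x|$ is small. The probability of that event is then controlled by an anti-concentration bound for one coordinate of a uniform vector on the sphere.

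\textbf{Step 1 (deterministic error bound).} Set $y_\pm = \mathrm{Exp}_x(\pm\pert u)$. Along the geodesic we have $\Log_x(y_\pm)=\pm\pert u$ and $d(x,y_\pm)\le \pert\|u\|=\pert$. The second inequality in the definition of geodesic $L$-smoothness then gives
\[
\bigl|f(y_\pm)-f(x)\mp\pert\langle\rgrad f(x),u\rangle_x\bigr|\le \tfrac{L}{2}\pert^2 .
\]
Subtracting the two bounds yields
\[
\bigl|f(y_+)-f(y_-)-2\pert\langle \rgrad f(x),u\rangle_x\bigr|\le L\pert^2 .
\]
Hence whenever $|\langle\rgrad f(x),u\rangle_x|> L\pert/2$, the sign of $f(y_+)-f(y_-)$ equals $\operatorname{sign}(\langle\rgrad f(x),u\rangle_x)$. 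By the definition of $\mathcal{Q}_f$, this gives $h_\pert(x)=\operatorname{sign}(\langle\rgrad f(x),u\rangle_x)u$ on that event. Here I read $\mathcal{Q}_f$ as returning $+1$ exactly when $f(y_+)>f(y_-)$, which is consistent with the claimed identity. Ties have probability zero or fall into the excluded event, so they cause no trouble.

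\textbf{Step 2 (probability bound).} The failure event is contained in $\{|\langle g,u\rangle|\le L\pert/2\}$ with $g=\rgrad f(x)$. Rotate the coordinates so that $g/\|g\|=e_1$; the event becomes $\{|u_1|\le L\pert/(2\|g\|)\}$. The density of $u_1$ on $[-1,1]$ is $c_d(1-t^2)^{(d-3)/2}$, where $c_d=\Gamma(d/2)/(\sqrt{\pi}\,\Gamma((d-1)/2))$. It is bounded by $c_d$, so
\[
P\le 2\cdot\frac{L\pert}{2\|g\|}\,c_d=\frac{L\pert}{\|g\|}c_d .
\]
It remains to show $c_d\le\sqrt{d/(2\pi)}$, which is equivalent to $\Gamma(d/2)/\Gamma((d-1)/2)\le\sqrt{d/2}$. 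This follows from Gautschi's (or Wendel's) inequality, $\Gamma(s+1/2)/\Gamma(s)\le\sqrt{s}$ with $s=(d-1)/2$. That inequality gives the stronger bound $\sqrt{(d-1)/2}$, which is at most $\sqrt{d/2}$. For $d=1$ the density formula degenerates, but then $u=\pm1$ and the event has probability $0$ unless $L\pert/(2\|g\|)\ge1$, in which case the stated bound exceeds $1$ anyway. If $\gamma_x\ge1$ the statement is trivial in every dimension.

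\textbf{Main obstacle.} The main obstacle is the constant in Step 2: obtaining exactly $\sqrt{d/(2\pi)}$ needs the Gamma-ratio inequality, and I would cite Wendel's inequality for it. The Riemannian part of Step 1 is the delicate point conceptually but is short: the exponential map makes the geodesic distance to both perturbed points at most $\pert$, and smoothness is applied only at the base point $x$. No curvature or parallel transport is needed. The condition $\pert<1$ is not really used; at most it guarantees that $\Log_x(y_\pm)=\pm\pert u$ holds within the injectivity radius, which I would assume implicitly.
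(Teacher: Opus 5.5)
Your argument follows the paper's proof step for step: the two-sided smoothness bound $|f(y_+)-f(y_-)-2\nu\langle \rgrad f(x),u\rangle_x|\le L\nu^2$, then the marginal density $c_d(1-t^2)^{(d-3)/2}$ of $u_1$, bounded by $c_d$, with Gautschi/Wendel giving $c_d\le\sqrt{d/(2\pi)}$. For $d\ge 3$ it is complete and correct, including your treatment of ties. Your two low-dimensional claims are wrong, however.

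\textbf{The case $d=2$.} Here the exponent $(d-3)/2=-\tfrac12$ is negative, so the density $\frac{1}{\pi\sqrt{1-t^2}}$ is not bounded by $c_2=1/\pi$. In fact $\mathbb{P}(|u_1|\le b)=\frac{2}{\pi}\arcsin b>2c_2b$ for $b\in(0,1]$, so your intermediate inequality $\mathbb{P}\le \frac{L\nu}{\|g\|}c_d$ is false. The paper makes the same slip when it asserts $(1-z^2)^{(d-3)/2}\le 1$. The conclusion survives because there is slack, and you can verify it directly. For $b\le 1$ we have $\frac{2}{\pi}\arcsin b\le b\le \frac{2b}{\sqrt{\pi}}=\gamma_x$, and for $b>1$ we have $\gamma_x>1$.

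\textbf{The case $d=1$.} Your remark contains an arithmetic error. With $b=L\nu/(2\|g\|)$ one has $\gamma_x=\sqrt{2/\pi}\,b$, which is still below $1$ for $b\in[1,\sqrt{\pi/2})$. This case cannot be rescued, because the statement itself fails there. On $\mathcal{M}=\mathbb{R}$, take $f'(t)=g-L|t|$ (smoothed slightly if you want $C^\infty$) with $L\nu/\sqrt{2\pi}<g<L\nu/2$. Then $f(\nu)-f(-\nu)=2g\nu-L\nu^2<0$. The outcomes for $u=1$ and $u=-1$ are mirror images: in both cases the oracle output disagrees with $\operatorname{sign}(gu)$. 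So $h_\nu(x)=-\operatorname{sign}(gu)u$ with probability one, while $1-\gamma_x>0$.

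The lemma and your proof should therefore be restricted to $d\ge 2$, with $d=2$ handled by the explicit arcsine computation above.
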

Now in Lemma~\ref{lem:estimating_normalized_gradient}, setting $v=\rgrad f(x)$, we show that $ \frac{\sqrt{d}}{\hat{C}}\operatorname{sign}(\langle \rgrad f(x),u\rangle_x)u$ is an unbiased estimator of the normalized gradient for some universal constant $\hat{C}$. 
\begin{lemma}\label{lem:estimating_normalized_gradient}
Fix $x\in\mathcal{M}$ and a nonzero vector $v\in \mathrm{T}_x\mathcal{M}$. 
Let $u\sim\mathrm{Unif}(\mathcal{S}_{\mathrm{T}_x\mathcal{M}}(1))$.  
The following holds for some universal constant $\hat{C} \in \big[\frac{1}{\sqrt{2\pi}},1\big]$:
\begin{align}\label{lem:estimating_normalized_gradient-eq}
    \mathbb{E}_u\big[\operatorname{sign}(\langle v,u\rangle_x)u\big]
= \frac{\hat{C}}{\sqrt{d}}\frac{v}{\|v\|_x}.
\end{align}
\end{lemma}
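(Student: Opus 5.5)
We reduce the statement to a computation on the Euclidean unit sphere. Fix an orthonormal basis $\{e_1,\dots,e_d\}$ of $\mathrm{T}_x\mathcal{M}$ whose first element is $e_1 = v/\|v\|_x$. In these coordinates $u\sim\mathrm{Unif}(\mathcal{S}_{\mathrm{T}_x\mathcal{M}}(1))$ becomes a coefficient vector $u\sim\mathrm{Unif}(\mathcal{S}_d(1))$, as noted before \eqref{eq:graddirestimator}. Moreover, $\langle v,u\rangle_x = \|v\|_x u_1$, so $\operatorname{sign}(\langle v,u\rangle_x)=\operatorname{sign}(u_1)$. The claim therefore reduces to showing
\[
\mathbb{E}[\operatorname{sign}(u_1)\,u] = \frac{\hat C}{\sqrt d}\, e_1 ,
\qquad \hat C := \sqrt{d}\,\mathbb{E}|u_1| \in \Big[\tfrac{1}{\sqrt{2\pi}},1\Big].
\]

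First we show that all coordinates $i\ge 2$ vanish. The uniform distribution on the sphere is invariant under the reflection $u_i\mapsto -u_i$, and this reflection leaves $u_1$ unchanged. Hence $\mathbb{E}[\operatorname{sign}(u_1)u_i] = -\mathbb{E}[\operatorname{sign}(u_1)u_i]=0$. The first coordinate is $\mathbb{E}[\operatorname{sign}(u_1)u_1]=\mathbb{E}|u_1|$. This gives exactly the direction $v/\|v\|_x$ with the stated constant $\hat C=\sqrt d\,\mathbb{E}|u_1|$. This constant depends only on $d$, not on $x$ or $v$, which is what "universal" should mean here.

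It remains to bound $\hat C$, which is the only real work. For the upper bound, Cauchy–Schwarz gives $\mathbb{E}|u_1|\le\sqrt{\mathbb{E}u_1^2}=1/\sqrt d$, since by symmetry $\sum_i\mathbb{E}u_i^2=1$. Hence $\hat C\le 1$.

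For the lower bound, write $u=g/\|g\|_2$ with $g\sim\mathcal{N}(0,I_d)$. The direction $u$ and the norm $\|g\|_2$ are independent, so
\[
\mathbb{E}|g_1| = \mathbb{E}\|g\|_2\,\mathbb{E}|u_1| .
\]
Using $\mathbb{E}|g_1|=\sqrt{2/\pi}$ and $\mathbb{E}\|g\|_2\le\sqrt{\mathbb{E}\|g\|_2^2}=\sqrt d$, we get $\mathbb{E}|u_1|\ge \sqrt{2/\pi}/\sqrt d$. Hence $\hat C\ge\sqrt{2/\pi}\ge 1/\sqrt{2\pi}$, which is the stated lower bound (and in fact a stronger one).

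The only delicate point is the independence of $\|g\|_2$ and $g/\|g\|_2$, which follows from the rotation invariance of the Gaussian distribution. Alternatively, one can compute $\mathbb{E}|u_1|$ exactly as $\Gamma(d/2)/(\sqrt{\pi}\,\Gamma((d+1)/2))$ and apply Gautschi-type bounds, but the Gaussian argument avoids this.
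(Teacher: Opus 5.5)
Your proof is correct. For the direction of the expectation and for the upper bound $\hat C\le 1$ it is essentially the paper's argument. The paper reflects along $v$ via $\xi\mapsto 2\langle v,\xi\rangle v-\xi$, which in your basis is exactly the simultaneous flip $u_i\mapsto -u_i$ for all $i\ge 2$, and it bounds $\mathbb{E}|u_1|\le 1/\sqrt d$ by the same Cauchy--Schwarz step.

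The lower bound is where you take a genuinely different route. The paper uses the explicit marginal density $A_d(1-t^2)^{(d-3)/2}$, with $A_d=\Gamma(d/2)/(\sqrt\pi\,\Gamma((d-1)/2))$, of a coordinate of $u$. It integrates $|t|$ against this density and then invokes Gautschi's inequality to get $\mathbb{E}|u_1|\ge 1/\sqrt{2\pi d}$. You instead use the polar decomposition $g=\|g\|_2\,u$ of a standard Gaussian, together with the independence of $\|g\|_2$ and $u$, so that $\sqrt{2/\pi}=\mathbb{E}\|g\|_2\,\mathbb{E}|u_1|\le\sqrt d\,\mathbb{E}|u_1|$.

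Your route needs no special-function estimates. It also covers $d=1$, where the density formula degenerates, without a separate case. And it yields $\hat C\ge\sqrt{2/\pi}$, which is twice the paper's constant and is in fact sharp, since $\hat C\to\sqrt{2/\pi}$ as $d\to\infty$. It would therefore improve the numerical constants wherever the paper later uses $\hat C\ge 1/\sqrt{2\pi}$.

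What the paper's route buys is economy within the paper. The same density and the same Gautschi-type bound $A_d\le\sqrt{d/(2\pi)}$ are reused in the proof of Lemma~\ref{lem:grad_with_bias} to control $\mathbb{P}(|\langle u,a\rangle|\le b)$.
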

Using Lemma~\ref{lem:grad_with_bias}, and Lemma~\ref{lem:estimating_normalized_gradient}, Proposition \ref{prop:estimate_biased_NG} below shows that the estimator $h_\pert(x)$ is, on average, approximately aligned with the gradient direction.
\begin{proposition}\label{prop:estimate_biased_NG}
Assume $f$ is geodesically $L$-smooth. For any $x\in\mathcal{M}$, $\nu\in (0,1)$ and $v\in S_{\mathrm{T}_x\mathcal{M}}(1)$, it holds that 
\begin{equation*}
\abs{\mathbb{E}_u\bigl[ \langle h_\pert(x), v\rangle\bigr]-\frac{\hat{C}}{\sqrt{d}}
\left\langle\frac{\rgrad f(x)}{\|\rgrad f(x)\|},v\right\rangle_x }
\le 2\gamma_x,
\end{equation*}
for some universal constant $\hat{C}\in[\frac{1}{\sqrt{2\pi}},1]$, where $\gamma_x$ is defined in \eqref{def-gamma_x}. 
\end{proposition}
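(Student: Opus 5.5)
We split the expectation over the event on which the estimator agrees with its idealized version. Let $E$ be the event of Lemma~\ref{lem:grad_with_bias}, namely $h_\pert(x)=\operatorname{sign}(\langle\rgrad f(x),u\rangle_x)u$. That lemma gives $\mathbb{P}(E^c)\le\gamma_x$. Write $g(u):=\operatorname{sign}(\langle\rgrad f(x),u\rangle_x)u$. Then
\begin{equation*}
\mathbb{E}_u\bigl[\langle h_\pert(x),v\rangle\bigr]=\mathbb{E}_u\bigl[\langle g(u),v\rangle\bigr]+\mathbb{E}_u\bigl[\langle h_\pert(x)-g(u),v\rangle\mathbb{1}(E^c)\bigr],
\end{equation*}
because the difference $h_\pert(x)-g(u)$ vanishes on $E$.

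For the first term we apply Lemma~\ref{lem:estimating_normalized_gradient} with the choice $v\leftarrow\rgrad f(x)$. We may assume this vector is nonzero, since otherwise $\gamma_x=\infty$ and the claim is trivial. Taking the inner product of \eqref{lem:estimating_normalized_gradient-eq} with the fixed vector $v$ and using linearity of expectation gives exactly
\begin{equation*}
\frac{\hat{C}}{\sqrt{d}}\left\langle\frac{\rgrad f(x)}{\|\rgrad f(x)\|},v\right\rangle_x.
\end{equation*}

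For the second term, note that both $h_\pert(x)$ and $g(u)$ equal $\pm u$. Here $\mathcal{Q}_f$ takes values in $\{\pm1\}$, and the sign function takes values in $\{\pm1\}$ except on the null event $\langle\rgrad f(x),u\rangle=0$. Hence $\|h_\pert(x)-g(u)\|\le 2\|u\|=2$. By Cauchy--Schwarz and $\|v\|=1$,
\begin{equation*}
\bigl|\langle h_\pert(x)-g(u),v\rangle\bigr|\le 2\quad\text{on }E^c.
\end{equation*}
The absolute value of the second term is therefore at most $2\,\mathbb{P}(E^c)\le 2\gamma_x$, which completes the argument.

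The only delicate point is measure-theoretic bookkeeping: the tie case $\operatorname{sign}(0)$ must be handled, but it has probability zero under the uniform distribution on the sphere. The real work, relating the comparison oracle to the gradient sign under curvature, is already contained in Lemma~\ref{lem:grad_with_bias}, so no obstacle is expected here.
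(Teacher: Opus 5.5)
Your proposal is correct and is essentially the paper's argument. The paper compares the scalars $A=\operatorname{sign}(f(\mathrm{Exp}_x(\nu u))-f(\mathrm{Exp}_x(-\nu u)))\langle u,v\rangle$ and $B=\operatorname{sign}(\langle\rgrad f(x),u\rangle_x)\langle u,v\rangle$, uses $\mathbb{P}(A\neq B)\le\gamma_x$ and $|A-B|\le 2$ to get $|\mathbb{E}_u A-\mathbb{E}_u B|\le 2\gamma_x$, and evaluates $\mathbb{E}_u B$ via Lemma~\ref{lem:estimating_normalized_gradient}, which is exactly your indicator decomposition; your handling of ties and of $\rgrad f(x)=0$ is extra bookkeeping that the paper leaves implicit.
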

\begin{remark}
Setting $v=\rgrad f(x)/\|\rgrad f(x)\|$, and $\pert=\sqrt{2\pi}/d$, we see that $\sqrt{d}h_\pert (x)/\hat{C}$ is approximately aligned with gradient direction with error $O(L/(\hat{C}\|\rgrad f(x)\|)$. This bias is unavoidable as $\hat{C}$, and $\|\rgrad f(x)\|$ are unknown. This is a key distinction of our setting from standard stochastic Riemannian optimization algorithms where the latter relies on unbiased gradient estimators \cite{SGD_riemannian, tripuraneni2018averaging}. Lemma~\ref{lem:estimating_normalized_gradient} improves the lower bound on $\hat{C}$ from $\tfrac{1}{20}$ in \citep{pmlr-v139-saha21b} to $\tfrac{1}{\sqrt{2\pi}}\approx0.4$, leading to up to an eightfold reduction in gradient direction estimation error bound.
Lemma~\ref{lem:grad_with_bias} sharpens the bias bound by removing the logarithmic factor $\sqrt{\log\big(|\nabla f(\mathbf{x})|/(\sqrt{d}L\nu)\big)}$ present in $\gamma_x$ in \citep{pmlr-v139-saha21b} for the Euclidean case.
\end{remark}

Building on the estimator $h_\pert(x)$, we now present our RDNGD method (Algorithm \ref{alg:alg1}) for solving \eqref{eq:target1.1}. Inputs of RDNGD are the initial point $x_0\in\mathcal{X}\subseteq\mathcal{M}$, learning rate $\{\eta_k\}$, perturbation radius $\nu$, and maximum iteration number $T>1$. $\hat{x}_k$ records the best iterate in the first $k$ iterations. 
The RDNGD method applies to two cases: (i) unconstrained problem where $f$ is geodesically $L$-smooth and $\mathcal{X}=\mathcal{M}$; (ii) constrained convex problem where $f$ is geodesically convex and geodesically $L$-smooth. The iteration complexities of RDNGD for obtaining an $\epsilon$-stationary solution in cases (i) and (ii) are presented in Theorem \ref{thm:beta_smoothness_complexityapp}, and Theorem \ref{thm:beta_smoothness_convex_complexityapp} respectively. For both cases, we consider two different choices of step sizes: the constant step size and the cosine annealing step size. The latter is defined as follows. 

\begin{algorithm}[t]
\caption{Riemannian Dueling Normalized Gradient Descent (\textbf{RDNGD})}
\label{alg:alg1}
\begin{algorithmic}[1]
\STATE Set $\hat{x}_0=x_0$
\FOR{$k = 0, 1, \dots, T-1$}
    \STATE Sample $u_k \sim \mathrm{Unif}(\mathcal{S}_{\mathrm{T}_{x_k}\mathcal{M}}(1))$
    \STATE $h_\pert(x_k) = \mathcal{Q}_f(\mathrm{Exp}_{x_k}(\pert u_k),\mathrm{Exp}_{x_k}(-\pert u_k))u_k$
    \STATE Update $x_{k+1} = \mathcal{P}_{\mathcal{X}}\left(\mathrm{Exp}_{x_k}(- \eta_k h_\pert(x_k))\right)$
    \STATE $\hat{x}_{k+1}=b_k\hat{x}_{k}+(1-b_k)x_{k+1}$, 
    \STATE \quad where $b_k=(\mathcal{Q}(x_{k+1},\hat{x}_k)+1)/2$
\ENDFOR
\STATE \textbf{Return} $\hat{x}:=\hat{x}_{T}$
\end{algorithmic}
\end{algorithm}

\begin{definition}[\bf Cosine Annealing step size]\label{def:cosine_annealing_step size}
Given the initial step size $\eta_0$, the minimum step size $\eta_{\min}\in [0,\eta_0]$, and the total number of iterations $T>0$, the cosine annealing step size at the $k$-th iteration is defined as:
\begin{equation}\label{eq:cosine-anneal}
\eta_k = \eta_{\min} + \frac{1}{2}(\eta_{0}-\eta_{\min})\left(1+\cos\left(\frac{k\pi}{T}\right)\right)
\end{equation}
for $k=0,1,\ldots,T$.
\end{definition}

\begin{theorem}\label{thm:beta_smoothness_complexityapp}
Assume $f$ is geodesically $L$-smooth. We consider using RDNGD (Algorithm~\ref{alg:alg1}) to solve \eqref{eq:target1.1} with $\mathcal{X}=\mathcal{M}$\footnote{Note that in this case, the projection operator in Algorithm \ref{alg:alg1} can be ignored.}. For any given $\epsilon>0$, RDNGD returns an $\epsilon$-stationary point with the following two sets of inputs. 
\vspace{-0.05in}
\begin{itemize}[leftmargin=*]
\item[(i)] \textbf{Constant step size.}
\[    T = \left\lceil \frac{L^2d(D+1)^2}{\hat{C}^2\epsilon^2} \right\rceil,\
    \eta_k\equiv \eta=\frac{1}{\sqrt{T}}, \
    \pert = \frac{\hat{C}\sqrt{2\pi}}{4dL}\epsilon.
\]
In this case, RDNGD returns a point $x_R$ such that $\mathbb{E}[\|\rgrad f(x_R)\|] < \epsilon$
where $x_R$ is chosen uniformly at random from $\{x_0, \dots, x_{T-1}\}$. 
 Throughout this paper, $D:= d^2(x_0,x^*)$ denotes the squared distance between the initial point and the optimal point. 

\item[(ii)] \textbf{Cosine annealing step size.}
\begin{equation}\label{thm:beta_smoothness_complexityapp-cosine-parameter}
\begin{split}
    T &= \left\lceil \left( \frac{2\sqrt{d}}{\epsilon \hat{C}} \left( 2L D + \tfrac{1}{2}L \right) \right)^{2} \right\rceil,\quad \eta_0 = \frac{1}{\sqrt{T}},\\
    &\eta_k \text{ defined in } \eqref{eq:cosine-anneal}, \quad
    \pert = \frac{\hat{C}\sqrt{\pi}}{2\sqrt{2}dL}\epsilon.
\end{split}
\end{equation}
In this case, RDNGD returns a point $x_R$ such that $\mathbb{E}[\|\rgrad f(x_R)\|] < \epsilon$
where $x_R$ is chosen at random from $\{x_0, \dots, x_{T-1}\}$ with probability $\mathbb{P}(R=r)=\frac{\eta_r}{\sum_{k=1}^T\eta_k}$.
\end{itemize}
\end{theorem}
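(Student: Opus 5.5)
The plan is a standard descent-lemma argument in which Proposition~\ref{prop:estimate_biased_NG} replaces the unbiased-gradient step.

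\textbf{One-step descent.} With $\mathcal{X}=\mathcal{M}$ the update is $x_{k+1}=\mathrm{Exp}_{x_k}(-\eta_k h_\pert(x_k))$. Since $\|h_\pert(x_k)\|=1$, we have $d(x_k,x_{k+1})\le \eta_k$ and $\Log_{x_k}(x_{k+1})=-\eta_k h_\pert(x_k)$. Geodesic $L$-smoothness (Definition~\ref{as:lipgrad}) then gives
\[
f(x_{k+1})\le f(x_k)-\eta_k\langle \rgrad f(x_k),h_\pert(x_k)\rangle+\tfrac{L}{2}\eta_k^2 .
\]

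\textbf{Expected inner product.} Take the expectation over $u_k$ conditioned on $x_k$. Apply Proposition~\ref{prop:estimate_biased_NG} with $v=\rgrad f(x_k)/\|\rgrad f(x_k)\|$, then multiply by $\|\rgrad f(x_k)\|$. The bias term becomes $2\gamma_{x_k}\|\rgrad f(x_k)\|=2\sqrt{d/(2\pi)}\,L\pert$, which no longer depends on $\|\rgrad f(x_k)\|$. This cancellation is the key step, since it removes the $1/\|\rgrad f\|$ blow-up in $\gamma_x$. (If $\rgrad f(x_k)=0$, the bound holds trivially.) We obtain
\[
\mathbb{E}_k\langle \rgrad f(x_k),h_\pert(x_k)\rangle\ge \tfrac{\hat C}{\sqrt d}\|\rgrad f(x_k)\|-\sqrt{\tfrac{2d}{\pi}}L\pert .
\]

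\textbf{Telescoping.} Sum over $k=0,\dots,T-1$, take total expectations, and rearrange:
\[
\sum_k \eta_k\,\mathbb{E}\|\rgrad f(x_k)\|\le \tfrac{\sqrt d}{\hat C}\Big(f(x_0)-f^*+\tfrac{L}{2}\textstyle\sum_k\eta_k^2\Big)+\tfrac{2dL\pert}{\hat C\sqrt{2\pi}}\textstyle\sum_k\eta_k .
\]
Apply smoothness again at the minimizer $x^*$, where $\rgrad f(x^*)=0$, to get $f(x_0)-f^*\le \tfrac L2 d^2(x_0,x^*)=\tfrac L2 D$. This is how $D$ enters. Dividing by $\sum_k\eta_k$ bounds $\mathbb{E}\|\rgrad f(x_R)\|$, where $R$ is drawn with probability proportional to $\eta_r$; for constant steps this is the uniform distribution.

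\textbf{Constant step size.} With $\eta=1/\sqrt T$ the bound reads
\[
\mathbb{E}\|\rgrad f(x_R)\|\le \frac{\sqrt d\,L(D+1)}{2\hat C\sqrt T}+\frac{2dL\pert}{\hat C\sqrt{2\pi}} .
\]
The choice $\pert=\hat C\sqrt{2\pi}\epsilon/(4dL)$ makes the second term equal to $\epsilon/2$. The choice of $T$ makes the first term at most $\epsilon/2$, and in fact strictly less after the slack is tracked.

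\textbf{Cosine annealing.} Use $\eta_k\le\eta_0$ to get $\sum\eta_k^2\le \eta_0\sum\eta_k$. Use $\sum_{k}\eta_k\ge \tfrac12 T\eta_0$: this holds because $\sum_{k=0}^{T-1}(1+\cos(k\pi/T))=T+\sum_{k=0}^{T-1}\cos(k\pi/T)=T+1$, using the identity $\sum_{k=0}^{T-1}\cos(k\pi/T)=1$, so with $\eta_{\min}\ge 0$ the sum is at least $\tfrac12(T+1)\eta_0$. The first term is then at most roughly $\tfrac{\sqrt d}{\hat C}\big(\tfrac{LD}{T\eta_0}+\tfrac{L\eta_0}{2}\big)$. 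With $\eta_0=1/\sqrt T$ this is $O\big(\sqrt d\,(LD+L/2)/(\hat C\sqrt T)\big)$, and the stated $T$ brings it below $\epsilon/2$. The given $\pert$ keeps the bias term below $\epsilon/2$.

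\textbf{Main obstacle.} The only genuinely delicate step is the second one: obtaining a bias that is uniform in $x_k$. The remaining work is bookkeeping of constants, including off-by-one indexing in the weights $\eta_r/\sum_{k=1}^T\eta_k$. Those constants can be absorbed because the stated $T$ and $\pert$ carry factors of $2$ of slack.
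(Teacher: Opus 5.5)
Your proposal is correct and follows essentially the same route as the paper: the one-step smoothness descent, then Proposition~\ref{prop:estimate_biased_NG} with $v=\rgrad f(x_k)/\|\rgrad f(x_k)\|$ so that the bias becomes the gradient-independent term $\sqrt{2d/\pi}\,L\pert$, then telescoping with $f(x_0)-f^*\le LD/2$, and finally the two parameter choices. In the cosine case the paper uses the looser bound $f(x_0)-f^*\le LD$ and assumes $\eta_{\min}=0$, so your version, which handles general $\eta_{\min}\ge 0$, has some room to spare there.

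One caveat applies to the paper as well. In the constant-step case the $\pert$-term is exactly $\epsilon/2$ and the $T$-term is only bounded by $\epsilon/2$. So the stated parameters give $\mathbb{E}\|\rgrad f(x_R)\|\le\epsilon$, not the strict inequality, unless you add a further argument; there is no factor-of-two slack to appeal to there. Also, the sampling weights should simply be normalized by $\sum_{k=0}^{T-1}\eta_k$, as you implicitly do. The stated $\sum_{k=1}^{T}\eta_k$ does not give a probability distribution, so this is a typo to correct rather than a constant to absorb.
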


\begin{theorem}\label{thm:beta_smoothness_convex_complexityapp}
Assume $f$ is geodesically $L$-smooth and geodesically convex, and Assumptions \ref{as:curvature_lb} and \ref{as:proj} hold. Consider using RDNGD (Algorithm \ref{alg:alg1}) to solve \eqref{eq:target1.1} over a bounded set $\mathcal{X}\subset\mathcal{M}$ of diameter $\mathfrak{D}$. For any $\epsilon>0$, RDNGD returns an $\epsilon$-optimal solution satisfying $\mathbb{E}[f(\hat{x}_{T})] - f(x^*) < \epsilon$ with the following two sets of inputs.
\begin{itemize}[leftmargin=*]
\item[(i)] \textbf{Constant step size.}
\begin{equation}\label{thm:beta_smoothness_convex_complexityapp-constant-step-param}
\begin{split}
    T &= \left\lceil1+\frac{16\pi d{L}\bar{\zeta}}{\epsilon}D\right\rceil, \quad 
    \eta =\frac{\sqrt{\epsilon}}{4\sqrt{\pi}\bar{\zeta}\sqrt{d{L}}}, \\
    \pert &= \frac{1}{4\sqrt{2} d}\frac{(\epsilon/{L})^{3/2}}{(\sqrt{D}+\eta T)^2}.
\end{split}
\end{equation}
Here and in the rest of the paper, $\bar{\zeta} :=  {\sqrt{|\kappa|}\mathfrak{D}}/{\tanh(\sqrt{|\kappa|} \mathfrak{D})}$, where $\kappa$ is the sectional curvature lower bound in Assumption \ref{as:curvature_lb}.

\item[(ii)] \textbf{Cosine annealing step size.}
\begin{equation}\label{thm:beta_smoothness_convex_complexityapp-cosine-step-param}
\begin{split}
    T &= \left\lceil 1 + \frac{32\pi d{L}\bar{\zeta}}{\epsilon}D \right\rceil, \quad 
    \eta_0 = \frac{\sqrt{\epsilon}}{4\sqrt{\pi}\bar{\zeta}\sqrt{d{L}}}, \\
    \pert &= \frac{1}{4\sqrt{2} d}\frac{(\epsilon/{L})^{3/2}}{(\sqrt{D}+\frac{\eta_0}{2}T)^2}.
\end{split}
\end{equation}
\end{itemize}
\end{theorem}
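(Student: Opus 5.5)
The plan is to track the squared distance to $x^*$ with a Riemannian trigonometric comparison inequality, and to combine it with the bias bound of Proposition~\ref{prop:estimate_biased_NG} and geodesic convexity.

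\textbf{Step 1: one-step distance recursion.} Set $y_{k+1}=\mathrm{Exp}_{x_k}(-\eta_k h_\pert(x_k))$. Since $x^*\in\mathcal{X}$ and $\mathcal{P}_{\mathcal{X}}$ is nonexpansive (Assumption~\ref{as:proj}), we have $d(x_{k+1},x^*)\le d(y_{k+1},x^*)$. The comparison inequality for curvature bounded below by $\kappa$ (Assumption~\ref{as:curvature_lb}, Zhang--Sra type) then gives
\[
d^2(x_{k+1},x^*)\le d^2(x_k,x^*)+2\eta_k\langle h_\pert(x_k),\Log_{x_k}(x^*)\rangle+\bar{\zeta}\eta_k^2 ,
\]
using $\|h_\pert\|=1$ and $d(x_k,x^*)\le\mathfrak{D}$.

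\textbf{Step 2: conditional expectation and bias.} Condition on $x_k$ and write $\Log_{x_k}(x^*)=r_k v$ with $\|v\|=1$. By Proposition~\ref{prop:estimate_biased_NG},
\[
\mathbb{E}\langle h_\pert,\Log_{x_k}(x^*)\rangle\le \tfrac{\hat C}{\sqrt d}\bigl\langle \rgrad f(x_k)/\|\rgrad f(x_k)\|,\Log_{x_k}(x^*)\bigr\rangle+2\gamma_{x_k}r_k .
\]
Geodesic convexity gives $\langle\rgrad f(x_k),\Log_{x_k}(x^*)\rangle\le -(f(x_k)-f^*)$. Moreover $L$-smoothness at the minimizer gives $\|\rgrad f(x_k)\|\le\sqrt{2L(f(x_k)-f^*)}$ in the unconstrained case; for the constrained case I plan to use $\|\rgrad f\|\le L\mathfrak{D}$-type bounds where needed. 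Hence the drift term is at most $-\frac{\hat C}{\sqrt d}\frac{f(x_k)-f^*}{\|\rgrad f(x_k)\|}$, while $\gamma_{x_k}\propto \sqrt d L\pert/\|\rgrad f(x_k)\|$. Both terms therefore share the factor $1/\|\rgrad f(x_k)\|$. The case split is as follows. If $f(x_k)-f^*\ge\epsilon$, the descent term $\hat C(f(x_k)-f^*)/\sqrt d$ dominates the bias $2\sqrt{d/(2\pi)}L\pert r_k$, because $\pert$ is chosen with $r_k\le\sqrt D+\eta T$ (distances grow at most $\eta$ per step). Using $\|\rgrad f\|\le\sqrt{2L(f-f^*)}$, the decrease is at least of order $\frac{1}{\sqrt{dL}}\sqrt{f(x_k)-f^*}\ge\sqrt{\epsilon/(dL)}$. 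Otherwise $x_k$ is already $\epsilon$-optimal.

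\textbf{Step 3: telescoping and the best iterate.} Suppose every $x_k$ had gap at least $\epsilon$. Summing the recursion would give
\[
0\le D-\sum_k 2\eta_k c\sqrt{\epsilon/(dL)}+\bar\zeta\sum_k\eta_k^2 .
\]
With $\eta=\sqrt\epsilon/(4\sqrt\pi\bar\zeta\sqrt{dL})$ the quadratic term absorbs half the linear one, and $T\ge 1+16\pi dL\bar\zeta D/\epsilon$ yields a contradiction. To make this rigorous in expectation, I would work with $\min_k(f(x_k)-f^*)$, or equivalently use a stopping-time argument on the first $\epsilon$-optimal iterate. Since $\hat x_k$ tracks the best iterate exactly through comparisons, we get $f(\hat x_T)=\min_{k\le T} f(x_k)$. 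In the cosine case, $\sum\eta_k\approx\eta_0T/2$ and $\sum\eta_k^2\le\eta_0^2T$, which accounts for the doubled $T$ and the $\eta_0T/2$ in $\pert$.

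\textbf{Main obstacle.} The hardest part is converting the pointwise case analysis into a bound on $\mathbb{E}[f(\hat x_T)]-f^*$, since the bias bound involves random $1/\|\rgrad f(x_k)\|$. To handle this, I would first divide the per-step inequality by the common factor and lower bound $\frac{f-f^*}{\|\rgrad f\|}\ge\sqrt{(f-f^*)/(2L)}$. I would then apply Jensen's inequality through the concavity of $\sqrt{\cdot}$ to $\mathbb{E}\min_k$, keeping the bias term $\propto \pert r_k$ deterministic via the bound $r_k\le \sqrt D+\eta T$.
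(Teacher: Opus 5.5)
Your Steps 1--3 are the paper's proof. It uses Lemma~\ref{lem:riemannian_cosine_law} with $\|h_\pert(x_k)\|=1$, Proposition~\ref{prop:estimate_biased_NG} with $v=\Log_{x_k}(x^*)/\|\Log_{x_k}(x^*)\|$, and the drift bound of Lemma~\ref{lem:inner_product_nt_logx}. The bias is absorbed using both $d(x_k,x^*)\le\sqrt{D}+\eta T$ and $\|\rgrad f(x_k)\|\ge\epsilon/d(x_k,x^*)$ (Lemma~\ref{lem:gt_lower_bound}). That second ingredient, implicit in your Step~2, is what the square in $(\sqrt{D}+\eta T)^2$ pays for. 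The result is a per-step decrease of $\epsilon/(16\pi dL\bar\zeta)$ and the contradiction. The paper runs that contradiction exactly as your Step~3 does: it treats ``$f(x_k)-f^*\ge\epsilon$ for all $k<T$'' as deterministic and takes full expectations of a recursion that holds only on that event. So it never confronts the obstacle you raise.

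Your proposed repairs, however, do not yield $\mathbb{E}[f(\hat x_T)]-f^*<\epsilon$ with the stated $T$.
\begin{itemize}
\item \textbf{Stopping time.} Let $\tau$ be the first index whose gap is below $\epsilon$, and $c=\epsilon/(16\pi dL\bar\zeta)$. Then $d^2(x_{k\wedge\tau},x^*)+c\,(k\wedge\tau)$ is a supermartingale, so $\mathbb{E}[\tau\wedge T]\le D/c$. This gives only $\mathbb{P}(\tau\ge T)\le D/(cT)$, which for $T=\lceil 1+D/c\rceil$ is merely below $1$. Converting it into an expectation bound (via $f(\hat x_T)-f^*\le f(x_0)-f^*$ on $\{\tau\ge T\}$) costs an extra factor of order $(f(x_0)-f^*)/\epsilon$ in $T$, unless you settle for a probability statement.
\item \textbf{Jensen.} Concavity gives $\mathbb{E}\sqrt{X}\le\sqrt{\mathbb{E}X}$, which lower-bounds $\mathbb{E}X$. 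So a bound on $\sum_k\mathbb{E}\sqrt{f(x_k)-f^*}$ cannot become an upper bound on $\mathbb{E}\min_k(f(x_k)-f^*)$.
\item \textbf{No summable inequality.} There is no untruncated inequality to feed into either argument. The bias term $4\eta\gamma_{x_k}d(x_k,x^*)\propto\pert\,d(x_k,x^*)/\|\rgrad f(x_k)\|$ keeps the random factor $1/\|\rgrad f(x_k)\|$ whatever bound you put on $d(x_k,x^*)$. It is small enough only on the event $f(x_k)-f^*\ge\epsilon$.
\end{itemize}

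Two quantitative remarks in your sketch would also break the stated parameters.
\begin{itemize}
\item \textbf{Cosine schedule.} The bound $\sum_k\eta_k^2\le\eta_0^2T$ is too crude. Since $\bar\zeta\eta_0=\sqrt{\epsilon}/(4\sqrt{\pi dL})$, the penalty $\bar\zeta\eta_0^2T$ cancels the descent $\frac{\sqrt{\epsilon}}{2\sqrt{\pi dL}}\cdot\frac{\eta_0}{2}(T+1)$ up to a $T$-independent remainder, so no contradiction arises. Use $\eta_k^2\le\eta_0\eta_k$ instead, i.e.\ $\sum_k\eta_k^2\le\frac{\eta_0^2}{2}(T+1)$. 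This is what the paper does after computing $\sum_k\eta_k^2=\frac{\eta_0^2}{8}(3T+4)$. It leaves a net decrease of $\frac{\sqrt{\epsilon}}{4\sqrt{\pi dL}}\sum_k\eta_k$ and produces the doubled $T$.
\item \textbf{Constrained case.} The fallback $\|\rgrad f\|\le L\mathfrak{D}$ lowers the drift from order $\sqrt{\epsilon/(dL)}$ to order $\epsilon/(\sqrt{d}\,L\mathfrak{D})$. With $\eta\propto\sqrt{\epsilon}$, the term $\bar\zeta\eta^2\propto\epsilon$ then dominates the descent (of order $\epsilon^{3/2}$) whenever $\epsilon\lesssim L\mathfrak{D}^2$. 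So the stated $\eta$ and $T$ cannot be recovered this way. The paper instead uses $\|\rgrad f(x_k)\|\le\sqrt{2L(f(x_k)-f^*)}$ throughout (Lemmas~\ref{lem:inner_product_nt_logx} and~\ref{lem:LG_smooth}). Its proof tacitly needs $f(x^*)\le f(\mathrm{Exp}_{x_k}(-\rgrad f(x_k)/L))$, i.e.\ that $x^*$ minimizes $f$ beyond $\mathcal{X}$. To match the stated constants you must adopt that same implicit assumption.
\end{itemize}
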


\begin{remark}
Here we remark that the Assumptions \ref{as:curvature_lb} and \ref{as:proj} naturally hold if $\mathcal{M}$ is a Hadamard manifold, and they are standard assumptions under such scenario. See \citep{Bacak2014Hadamard,zhang2016firstordermethodsgeodesicallyconvex, li2024zeroth}.
\end{remark}

\begin{remark}
Our rates in
Theorem~\ref{thm:beta_smoothness_convex_complexityapp} match the rates in \citep{pmlr-v139-saha21b} when the problem is reduced to the Euclidean case. However, \citep{pmlr-v139-saha21b} requires strictly smaller choices of step size to ensure convergence due to logarithmic factors in the analysis, whereas our analysis permits larger step size potentially leading to faster convergence in practice. Furthermore, when reduced to Euclidean case, our results in Theorems \ref{thm:beta_smoothness_complexityapp} and \ref{thm:beta_smoothness_convex_complexityapp} also improve the dimension dependence in \citep{zhang2024comparisons} by a factor of $\log d$. Additionally, our geometry-aware bounds are adaptive to the intrinsic manifold structure, in contrast to the ambient-dimension–dependent factors in prior analyses.
\end{remark}

We now introduce a new algorithm that achieves linear convergence rate for solving \eqref{eq:target1.1} when $f$ is geodesically $\alpha$-strongly convex and geodesically $L$-smooth. The key observation is that under strong convexity, controlling $f(x)-f(x^*)$ directly controls the squared distance $d^2(x,x^*)$. We exploit this idea by designing an algorithm that runs in phases: in phase $k$, we run our RDNGD (Algorithm \ref{alg:alg1}) to obtain an iterate with pre-given function value sub-optimality $\epsilon_k$, which leads to reductions in the estimation error. By reducing $\epsilon_k$ in each iteration, the convergence rate is improved to linear rate. 
The algorithm is called Riemannian Recurrent Dueling Normalized Gradient Descent (RRDNGD) and is presented in Algorithm~\ref{alg:alg2}, where we used a constant phase length $t = \lceil{1+{64\pi d L \bar{\zeta}}/{\alpha}}\rceil$.

\begin{algorithm}[t]
\caption{Riemannian Recurrent Dueling Normalized Gradient Descent (\textbf{RRDNGD})}
\label{alg:alg2}
\begin{algorithmic}[1]
\STATE \textbf{Input:} Initial point $x_0 \in \mathcal{X} \subset \mathcal{M}$, constant phase length $t = \left\lceil 1 + \frac{64\pi d L \bar{\zeta}}{\alpha} \right\rceil$, total number of phases $K$
\STATE \textbf{Initialize:} $x^{(0)}=x_0$, $D_0 = D$
\FOR{$k = 0, 1, \dots, K-1$}
    \STATE Run RDNGD with inputs $(x^{(k)}, \eta_k, \nu_k, t)$ to obtain $x^{(k+1)}$
    \STATE Update $D_{k+1} = D_{k}/2$
\ENDFOR
\STATE \textbf{Return} $\hat{x} = x^{(K)}$
\end{algorithmic}
\end{algorithm}

We now present the oracle complexity of RRDNGD. 
\begin{theorem}\label{thm:strong_convexity_complexity}
Assume $f$ is geodesically $L$-smooth and geodesically $\alpha$-strongly convex. Assume Assumptions \ref{as:curvature_lb} and \ref{as:proj} hold, and the global optima $x^*$ lies in the interior of $\mathcal{X}$. Consider using RRDNGD (Algorithm \ref{alg:alg2}) to solve \eqref{eq:target1.1} over a bounded set $\mathcal{X}\subset\mathcal{M}$ of diameter $\mathfrak{D}$. For any $\epsilon>0$, we set $K = \left\lceil \log_2\left( \frac{l(L,\mathfrak{D})^2 D}{\epsilon^2} \right) \right\rceil$, where $l(L,\mathfrak{D})$ is constant dependent on $L$, and $\mathfrak{D}$. In the $k$-th iteration, set:
\[    \epsilon_k = \frac{\alpha D}{2^{2-k}}, 
    \eta_k = \frac{1}{4\bar{\zeta}}\sqrt{\frac{\epsilon_k}{\pi dL}}, 
    \nu_k = \frac{1}{4\sqrt{2} d}\frac{(\epsilon_k/L)^{3/2}}{(2\sqrt{\epsilon_{k}/\alpha}+\eta_k t)^2}.
\]
Then RRDNGD returns an $\epsilon$-optimal solution $\hat{x}$ satisfying $\mathbb{E}[f(\hat{x})] - f(x^*) \leq \epsilon$
with an oracle complexity of
$\mathcal{O}\left( \frac{dL\bar{\zeta}}{\alpha} \log\left( \frac{l(L,\mathfrak{D}) \sqrt{D}}{\epsilon} \right) \right)$\footnote{Our complexity improves the dependency on the initial squared distance from linear in \citep{pmlr-v139-saha21b}, which is $\mathcal{O}\big( \frac{dL}{\alpha} ( \log \frac{\alpha}{\epsilon} + \|\mathbf{x}_1 - \mathbf{x}^*\|^2 ) \big)$, to logarithmic.}.
\end{theorem}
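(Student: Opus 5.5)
The plan is an induction over phases, with the convex guarantee of Theorem~\ref{thm:beta_smoothness_convex_complexityapp}(i) applied inside each phase. The inductive invariant is
\[
\mathbb{E}[f(x^{(k)})]-f^*\le \epsilon_{k}, \qquad \mathbb{E}[d^2(x^{(k)},x^*)]\le D_k = D/2^k,
\]
where the index convention for $\epsilon_k$ is adjusted so that the phase-$k$ target matches $\alpha D_{k+1}/2$.

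The link between the two quantities comes from strong convexity. Since $x^*$ lies in the interior of $\mathcal{X}$, we have $\rgrad f(x^*)=0$. Applying \eqref{geo-convex-inequality} at $x=x^*$ then gives
\[
\tfrac{\alpha}{2}d^2(y,x^*)\le f(y)-f^*.
\]
So any function-value accuracy $\epsilon$ yields a squared-distance bound $2\epsilon/\alpha$. The base case $k=0$ is the definition $D_0=D$.

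For the inductive step, I run RDNGD from $x^{(k)}$ with the constant step choice of Theorem~\ref{thm:beta_smoothness_convex_complexityapp}(i). The accuracy is $\epsilon_k$, and the role of $D$ is played by the current squared distance. The prescribed $\eta_k$ is exactly the constant step size there. Theorem~\ref{thm:beta_smoothness_convex_complexityapp} requires
\[
T\ge 1+16\pi dL\bar\zeta D_k/\epsilon_k.
\]
With $\epsilon_k\asymp \alpha D_k/4$, this ratio is the constant $64\pi dL\bar\zeta/\alpha$, so the fixed phase length $t$ suffices in every phase. Likewise, $\sqrt{D_k}\le 2\sqrt{\epsilon_k/\alpha}$, so the prescribed $\nu_k$ is no larger than the one required in \eqref{thm:beta_smoothness_convex_complexityapp-constant-step-param}. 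A smaller $\nu$ only decreases $\gamma_x$, so the guarantee is preserved.

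This yields $\mathbb{E}[f(x^{(k+1)})]-f^*\le \epsilon_k$. By the quadratic growth bound above, it also yields $\mathbb{E}[d^2(x^{(k+1)},x^*)]\le 2\epsilon_k/\alpha\le D_{k+1}$, which closes the induction.

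The main obstacle is that Theorem~\ref{thm:beta_smoothness_convex_complexityapp} is stated for a deterministic initial point, while $x^{(k)}$ is random. I would condition on $\mathcal{F}_k$, the history up to phase $k$. I would also recheck the proof of that theorem to confirm that its bound is affine in $d^2(x_0,x^*)$ when the parameters are fixed, rather than depending on $d^2(x_0,x^*)$ through the parameter choices. Then taking the outer expectation gives a bound in terms of $\mathbb{E}[d^2(x^{(k)},x^*)]\le D_k$.

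If the parameters depend nonlinearly on $D$, I would instead use the almost-sure bound $d^2\le\mathfrak{D}^2$ together with Markov's inequality. This is where the constant $l(L,\mathfrak{D})$ enters. The returned $\hat{x}_T$ is the best iterate of the phase, so $f(\hat x_T)\le f(x_{\text{last}})$, and this monotonicity is what makes the conversion valid.

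After $K$ phases the error is $\epsilon_{K-1}\propto \alpha D\,2^{-K}$. Choosing $K=\lceil\log_2(l^2D/\epsilon^2)\rceil$ makes this at most $\epsilon$. This choice is conservative: it involves squares because of the $\sqrt{\cdot}$ loss in the Markov/Jensen step, and it absorbs the $\alpha$- and $L$-dependent constants into $l$.

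Each phase uses $t$ iterations, and each iteration makes two oracle calls (one for $h_\nu$ and one for $b_k$). The total oracle complexity is therefore
\[
\mathcal{O}(tK)=\mathcal{O}\!\left(\frac{dL\bar\zeta}{\alpha}\log\frac{l\sqrt D}{\epsilon}\right).
\]
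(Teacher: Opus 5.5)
Your argument matches the paper's proof in its core. Each phase invokes Theorem~\ref{thm:beta_smoothness_convex_complexityapp}(i) with $D_k$ in place of $D$ and $\epsilon_k=\alpha D_k/4$; the statement's $2^{2-k}$ should read $2^{2+k}$, as the paper's proof confirms. The fixed $t$ and the prescribed $\nu_k$ fit because $16\pi dL\bar\zeta D_k/\epsilon_k=64\pi dL\bar\zeta/\alpha$ and $2\sqrt{\epsilon_k/\alpha}=\sqrt{D_k}$. Strong convexity then halves $D_k$, at a cost of $2t$ oracle calls per phase.

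The only real difference is the last step. The paper carries only $\mathbb{E}[d^2(x^{(K)},x^*)]\le D/2^K$ to the end. It converts this using the geodesic Lipschitz constant $l$ of $f$ on the bounded set $\mathcal{X}$ (e.g.\ $l=L\mathfrak{D}$, since $\rgrad f(x^*)=0$) and Jensen, obtaining $\mathbb{E}[f(x^{(K)})]-f^*\le l\sqrt{D/2^K}$. That is exactly where $l$ and the squares in $K$ come from.

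You instead read off the last phase's guarantee $\alpha D/2^{K+1}$. This is tighter and needs neither $l$ nor the square; $K=\lceil\log_2(\alpha D/(2\epsilon))\rceil$ would suffice. So the ``Markov/Jensen loss'' you cite is not part of your main line. Rather than absorbing $\alpha$-dependent constants into $l$, you can check the stated $K$ directly. With $l$ the Lipschitz constant, $\tfrac{\alpha}{2}D\le f(x_0)-f^*\le l\sqrt D$. Hence $\alpha D/(2\epsilon)\le l^2D/\epsilon^2$ whenever $l\sqrt D\ge\epsilon$, and otherwise $x_0$ is already $\epsilon$-optimal.

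The random starting point you flag is not resolved in the paper either. It simply plugs $D_k\ge\mathbb{E}[d^2(x^{(k)},x^*)]$ into Theorem~\ref{thm:beta_smoothness_convex_complexityapp}(i), whose $\nu$-condition uses the pathwise $d(x_0,x^*)$. So you are not missing anything the paper supplies. Be aware, though, that your fallback of bounding the bad event via $\mathfrak{D}^2$ contributes an error that does not shrink with $\epsilon_k$. On its own it would not keep the phase length constant.
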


\section{Riemannian Dueling Frank-Wolfe Method}
Note that RDNGD method requires a projection in each iteration, which can be expensive for some applications. To overcome this issue, in this section, we design a projection-free Riemannian dueling Frank-Wolfe (RDFW) method using comparison oracles. In the Euclidean space, the Frank-Wolfe method calls a linear minimization oracle in each iteration. On manifolds, at iteration $k$, Frank-Wolfe method requires minimizing the following analogous subproblem:
\begin{equation}\label{eq:rfwsubprob}
 \arg\min_{z\in\mathcal{X}} \left\langle \rgrad f(x_k), \Log_{x_k}(z)\right\rangle.
\end{equation}
In some applications, unlike projection, \eqref{eq:rfwsubprob} admits a closed–form solution. 
For example, consider the manifold of symmetric positive definite (SPD) matrices $\mathcal{M}=\mathcal{S}_{++}^n:=\{X\in\mathbb{R}^{n\times n}\mid X^T=X, X\succ 0\}$ with a geodesically convex ``interval'' constraint
$\mathcal{X} := \{ X \in \mathcal{M} : L \preceq X \preceq U \}$ for SPD matrices $L$ and $U$. When $L$ and $U$ do not commute, the Riemannian projection 
requires an expensive iterative solver. In contrast, \eqref{eq:rfwsubprob} admits a closed–form solution that is cheap to compute \citep{RFW}.
\begin{algorithm}[t]
\caption{Riemannian Dueling Frank-Wolfe Method (\textbf{RDFW})}
\label{alg:RFW_batch}
\begin{algorithmic}[1]
\STATE \textbf{Input:} Initial point $x_0 \in \mathcal{X} \subseteq \mathcal{M}$, total number of iterations $T$
\FOR{$k=0, 1, \dots, T-1$}
    \FOR{$j=1, \dots, M_k$}
        \STATE Sample $u^{(k)}_j \sim \mathrm{Unif}(\mathcal{S}_{\mathrm{T}_{x_k}\mathcal{M}}(1))$
        \STATE $o^{(k)}_j=\mathcal{Q}_f(\mathrm{Exp}_{x_k}(\pert_k u^{(k)}_j),\mathrm{Exp}_{x_k}(-\pert_k u^{(k)}_j))$
        \STATE $h_{\pert_k}^j(x_k) = o_j^{(k)} u^{(k)}_j$
    \ENDFOR
    \STATE $\bar h_k \gets \frac{1}{M_k}\sum_{j=1}^{M_k} h_{\pert_k}^j(x_k)$
    \STATE $z_k \gets \arg\min_{z\in\mathcal{X}} \left\langle \bar h_k, \Log_{x_k}(z)\right\rangle$
    \STATE $s_k \gets \frac{2}{k+3}$
    \STATE $x_{k+1}=\mathrm{Exp}_{x_k}(s_k\Log_{x_k}(z_k))$
\ENDFOR
\STATE \textbf{Return} $\hat{x} = x_{T}$
\end{algorithmic}
\end{algorithm}

We now present our RDFW algorithm with the detailed description in Algorithm \ref{alg:RFW_batch}. In the absence of gradient information in dueling feedback setting, RDFW solves the subproblem~\eqref{eq:rfwsubprob} with $\rgrad f(x)$ replaced by our gradient direction estimator. However, the solution to \eqref{eq:rfwsubprob} is highly sensitive to the noise in gradient estimation (see Section~\ref{sec:fw_sensitivity_example}). To reduce the noise variance, in the $k$-th iteration, we use the batch gradient direction estimator given by:
\begin{equation}\label{eq:batchgrad}
\bar{h}_k = \frac{1}{M_k}\sum_{j=1}^{M_k} o^{(k)}_ju^{(k)}_j,
\end{equation}
where we use $M_k$ i.i.d. directions $u^{(k)}_j \sim \mathrm{Unif}(\mathcal{S}_{\mathrm{T}_{x_k}\mathcal{M}}(1))$. The variance of $\bar{h}_k$ reduces at a $\frac{1}{M_k}$ rate. We now solve the following Riemannian Frank–Wolfe subproblem:
\begin{align*}
z_k \in \arg\min_{z\in\mathcal{X}} \left\langle \bar{h}_k, \Log_{x_k}(z)\right\rangle.
\end{align*}
The update $x_{k+1}$ is obtained by moving along the geodesic from $x_k$ to $z_k$, and this step preserves feasibility. Theorem \ref{thm:RFW_batch} gives the oracle complexity of RDFW.  
\begin{theorem}\label{thm:RFW_batch}
Assume $f$ is geodesically $L$-smooth and geodesically convex. For any given $\epsilon>0$, suppose RDFW (Algorithm~\ref{alg:RFW_batch}) is run with 
\begin{align*}
    &\pert_k=\sqrt{\frac{\pi}{8}}\frac{\hat{C}}{dL\mathfrak{D}}\frac{2}{k+3}, \quad M_k=\frac{8dL\mathfrak{D}^2}{\hat{C}^2}(k+3),\\
    &T=\left\lceil \frac{2(f(x_{0})-f(x^*)) + 4L\mathfrak{D}^2+12}{\epsilon}\right\rceil,
\end{align*} 
then $\mathbb{E}\left[f(x_{T}) - f(x^*)\right]<\epsilon$, and the oracle complexity is $2\sum_{k=0}^{T-1} M_k =\mathcal{O}(d/\epsilon^{2})$.
\end{theorem}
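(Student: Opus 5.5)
The plan is to run the standard Frank--Wolfe descent argument with step $s_k=\frac{2}{k+3}$. The main modification is that the Frank--Wolfe gap is measured with the rescaled estimator $\frac{\sqrt{d}}{\hat C}\|\rgrad f(x_k)\|\,\bar h_k$ in place of $\rgrad f(x_k)$, and we track the resulting error in expectation.

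\textbf{One-step inequality.} Since $x_{k+1}=\mathrm{Exp}_{x_k}(s_k\Log_{x_k}(z_k))$, geodesic $L$-smoothness and $d(x_k,z_k)\le\mathfrak{D}$ give
\[
f(x_{k+1})\le f(x_k)+s_k\langle \rgrad f(x_k),\Log_{x_k}(z_k)\rangle+\tfrac{L}{2}s_k^2\mathfrak{D}^2 .
\]
Write $g_k=\rgrad f(x_k)$ and $\tilde h_k=\frac{\sqrt{d}\|g_k\|}{\hat C}\bar h_k$. The minimizer $z_k$ does not change when $\bar h_k$ is multiplied by a positive scalar. Hence $\langle\tilde h_k,\Log_{x_k}(z_k)\rangle\le\langle\tilde h_k,\Log_{x_k}(x^*)\rangle$. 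Splitting $g_k=\tilde h_k+(g_k-\tilde h_k)$ and using geodesic convexity, $\langle g_k,\Log_{x_k}(x^*)\rangle\le f^*-f(x_k)$, we obtain
\[
f(x_{k+1})-f^*\le(1-s_k)(f(x_k)-f^*)+2s_k\mathfrak{D}\,\|g_k-\tilde h_k\|+\tfrac{L}{2}s_k^2\mathfrak{D}^2 .
\]

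\textbf{Bounding the estimation error.} Conditionally on $x_k$, decompose
\[
\mathbb{E}\|g_k-\tilde h_k\|\le\|g_k-\mathbb{E}\tilde h_k\|+\sqrt{\mathbb{E}\|\tilde h_k-\mathbb{E}\tilde h_k\|^2}.
\]
For the bias term, apply Proposition~\ref{prop:estimate_biased_NG} with $v$ equal to the unit vector in the direction of $g_k-\mathbb{E}\tilde h_k$. This gives
\[
\|g_k-\mathbb{E}\tilde h_k\|\le\frac{\sqrt d\|g_k\|}{\hat C}\,2\gamma_{x_k}=\frac{2dL\nu_k}{\sqrt{2\pi}\,\hat C}.
\]
The factor $\|g_k\|$ cancels, which is the key point; the case $g_k=0$ is handled trivially. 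With the prescribed $\nu_k$ this bias equals $\frac{1}{2\mathfrak{D}}s_k$, up to the constant in $\nu_k$.

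For the variance term, each summand $o_j u_j$ has unit norm, so $\mathbb{E}\|\bar h_k-\mathbb{E}\bar h_k\|^2\le 1/M_k$. The standard deviation of $\tilde h_k$ is therefore at most $\frac{\sqrt d\|g_k\|}{\hat C\sqrt{M_k}}$. With $M_k=\frac{8dL\mathfrak{D}^2}{\hat C^2}(k+3)$ this is $\|g_k\|\,(8L\mathfrak{D}^2(k+3))^{-1/2}$.

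\textbf{Main obstacle.} The difficulty is that $\|g_k\|$ does not cancel in the variance term, and $M_k$ grows only linearly, so the standard deviation decays only like $s_k^{1/2}$ rather than $s_k$. I would first try to handle the $\|g_k\|$ factor with AM-GM, turning the product $2s_k\mathfrak{D}\cdot\mathrm{std}$ into $O(s_k^2)$ plus a small multiple of $s_k\|g_k\|^2/L$. The plan is then to control $\|g_k\|^2\le 2L(f(x_k)-f^*)+\|\rgrad f(x^*)\|^2$ in the interior case, or, more simply, to use that $\|g_k\|$ is bounded on the compact set $\mathcal{X}$ and absorb the resulting constant into the "$+12$" appearing in $T$. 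If the AM-GM route produces an extra $s_k^{3/2}$ term, that would leave an $\epsilon^{-1/2}$-type slack, and I would then re-balance the constants.

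\textbf{Solving the recursion.} Assuming the per-step error totals $O(s_k^2)$ with a constant $C_0=2L\mathfrak{D}^2+6$, the recursion reads
\[
e_{k+1}\le(1-s_k)e_k+C_0 s_k^2,\qquad e_k:=\mathbb{E}[f(x_k)-f^*].
\]
Induction gives $e_k\le\frac{2(e_0+2C_0)}{k+3}$, which matches the stated $T$.

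\textbf{Oracle complexity.} Each inner sample uses two oracle calls, so the total is $2\sum_k M_k=O\!\big(\frac{dL\mathfrak{D}^2}{\hat C^2}T^2\big)=O(d/\epsilon^2)$.
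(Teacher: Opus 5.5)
Up to the variance term your argument matches the paper's proof. The one-step inequality is the same: the paper's error term $\|\rgrad f(x_k)\|\,\|w_k-\tilde C\bar h_k\|$, with $\tilde C=\sqrt d/\hat C$, is your $\|g_k-\tilde h_k\|$. You also use Proposition~\ref{prop:estimate_biased_NG} the same way, with $\|g_k\|$ cancelling so that the bias contributes exactly $s_k^2$, and the same $1/M_k$ variance bound.

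\textbf{First gap: the fallback fails.} You leave the key step open, and your fallback for it does not work. The variance contribution is
\[
2s_k\mathfrak D\cdot\frac{\|g_k\|}{\sqrt{8L\mathfrak D^2(k+3)}}=\frac{\|g_k\|}{2\sqrt L}\,s_k^{3/2}.
\]
Bounding $\|g_k\|\le G$ on the compact set $\mathcal X$, or carrying a $\|\rgrad f(x^*)\|$ term, leaves an additive error of order $s_k^{3/2}$. The recursion $e_{k+1}\le(1-s_k)e_k+Cs_k^2+C's_k^{3/2}$ then only gives $e_k=O(k^{-1/2})$, i.e.\ $T=O(\epsilon^{-2})$ and $O(d\epsilon^{-4})$ oracle calls. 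That is a loss in rate, not a constant that can be absorbed into the ``$+12$'' or removed by rebalancing. What the argument needs is the gradient-domination bound $\|g_k\|^2\le 2L\Delta_k$ with $\Delta_k=f(x_k)-f^*$ (Lemma~\ref{lem:LG_smooth}, which is what the paper uses). With it,
\[
\frac{\|g_k\|}{2\sqrt L}\,s_k^{3/2}\le s_k\sqrt{\Delta_k}\sqrt{s_k/2}\le s_k\Bigl(\frac{\Delta_k}{4}+\frac{s_k}{2}\Bigr).
\]
Your concern that this bound is clean only when $x^*$ is effectively an unconstrained minimizer is legitimate. The paper proves that lemma via $f(x^*)\le f(\mathrm{Exp}_x(-\rgrad f(x)/L))$ and invokes it here without comment. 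Still, neither of your substitutes removes the need for it.

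\textbf{Second gap: the recursion.} Once you use this bound, Young's inequality consumes part of the contraction. The recursion becomes
\[
\mathbb E[\Delta_{k+1}\mid x_k]\le\Bigl(1-\tfrac34 s_k\Bigr)\Delta_k+\tfrac{s_k^2}{2}\bigl(L\mathfrak D^2+3\bigr),
\]
not $e_{k+1}\le(1-s_k)e_k+C_0s_k^2$, so your induction does not apply as written. The paper closes it using
\[
\prod_{t=j+1}^{k-1}\Bigl(1-\frac{3/2}{t+3}\Bigr)\le\Bigl(\frac{j+3}{k+2}\Bigr)^{3/2},
\]
which gives $\mathbb E[\Delta_k]\le\frac{2\Delta_0+4L\mathfrak D^2+12}{k+2}$ and hence the stated $T$. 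A direct induction on $\mathbb E[\Delta_k]\le A/(k+2)$ with $A=2\Delta_0+4L\mathfrak D^2+12$ also works.

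Both routes depend on the Young split leaving more than half of $s_k\Delta_k$ in the contraction, since the effective exponent $2\cdot\frac34$ must exceed $1$. If you absorbed $\frac12 s_k\Delta_k$ instead, the product would decay only like $1/k$ and an extra $\log k$ factor would appear. Also, your $C_0=2L\mathfrak D^2+6$ is the coefficient of $(k+3)^{-2}$, not of $s_k^2$; the coefficient of $s_k^2$ is $\frac12(L\mathfrak D^2+3)$.
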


\section{Numerical Experiments}

In this section, we conduct numerical experiments to test our algorithms on both synthetic data and real applications.

\subsection{Synthetic Problems}
We consider three problems with synthetic data: Rayleigh quotient maximization, the Karcher mean problem, and the Karcher mean problem with an additional constraint. 

\subsubsection{Rayleigh Quotient Maximization}\label{sec:rayleigh}

Given a symmetric matrix $A \in \mathbb{R}^{d \times d}$, the Rayleigh quotient maximization problem is formulated as the minimization of the negative quadratic form over the unit sphere:
\begin{equation}\label{prob:rayleigh}
    \min_{x \in \mathcal{S}_d(1)} f(x) := -\frac{1}{2} x^\top A x.
\end{equation}
The objective $f$ is geodesically $L$-smooth with $L = \lambda_{\max}(A) - \lambda_{\min}(A)$ \citep{pmlr-v162-kim22k} and $f^* = -\frac{1}{2}\lambda_{\max}(A)$. Following the setup in \cite{pmlr-v162-kim22k}, we generate a random matrix $B \in \mathbb{R}^{d \times d}$ with entries sampled i.i.d.\ from $\mathcal{N}(0, 1/d)$ and set $A = \frac{1}{2}(B + B^\top)$. 
We compare our RDNGD (Algorithm \ref{alg:alg1}) 
with the zeroth-order Riemannian gradient descent method (ZO-RGD) \citep{li2023stochastic}. While ZO-RGD requires function values, RDNGD relies only on comparison oracles yet achieves comparable performance. The algorithms are initialized at $x_0 \sim \mathrm{Unif}(\mathcal{S}_d(1))$. RDNGD-constant uses a constant step size $\eta = 10^{-2}$, and RDNGD-cosine employs cosine annealed step size with $\eta_{0}=10^{-1}$ and $\eta_{\min}=10^{-8}$. 
Both RDNGD variants use $\nu=10^{-8}$. For ZO-RGD \citep{li2023stochastic}, we set the smoothing parameter as $10^{-6}$ and the step size as $\frac{1}{2dL}$. These values are tuned via a grid search. All algorithms are run for $T=50,000$ iterations. 

Figure~\ref{fig:rayleigh} reports $f(x_k)-f(x^*)$ for different methods for \eqref{prob:rayleigh} for dimensions $d=100$ and $d=150$. We see that RDNGD-cosine achieves similar performance as ZO-RGD although the former uses only the comparison oracle. Although RDNGD-constant performed worse than RDNGD-cosine and ZO-RGD but still achieves acceptable accuracy.
\begin{figure}[t]
    \centering
    \begin{minipage}[b]{0.48\linewidth}
        \centering
        \includegraphics[width=\linewidth]{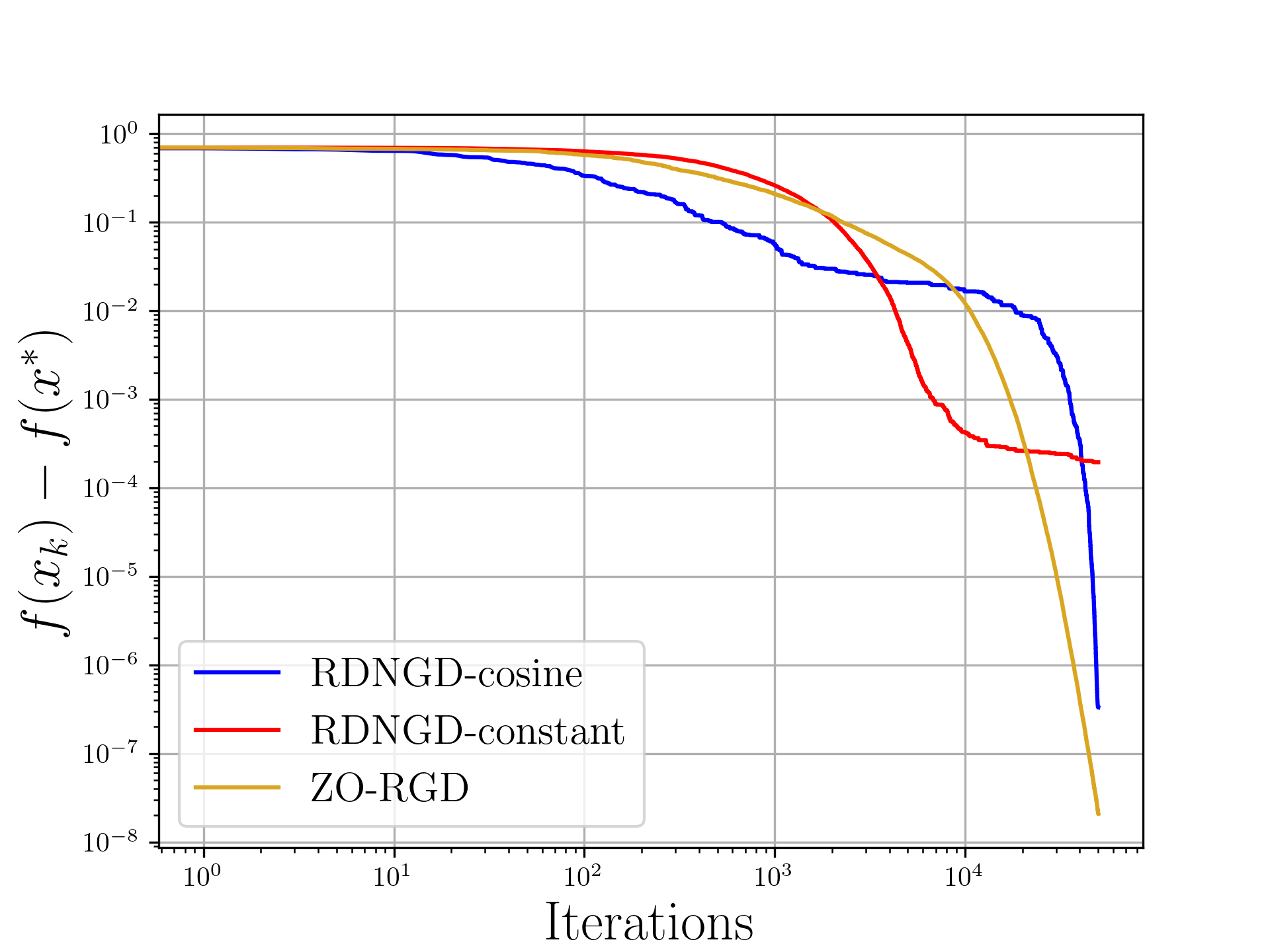}
        \centerline{\small (a) d=100}
    \end{minipage}
    \hfill
    \begin{minipage}[b]{0.48\linewidth}
        \centering
        \includegraphics[width=\linewidth]{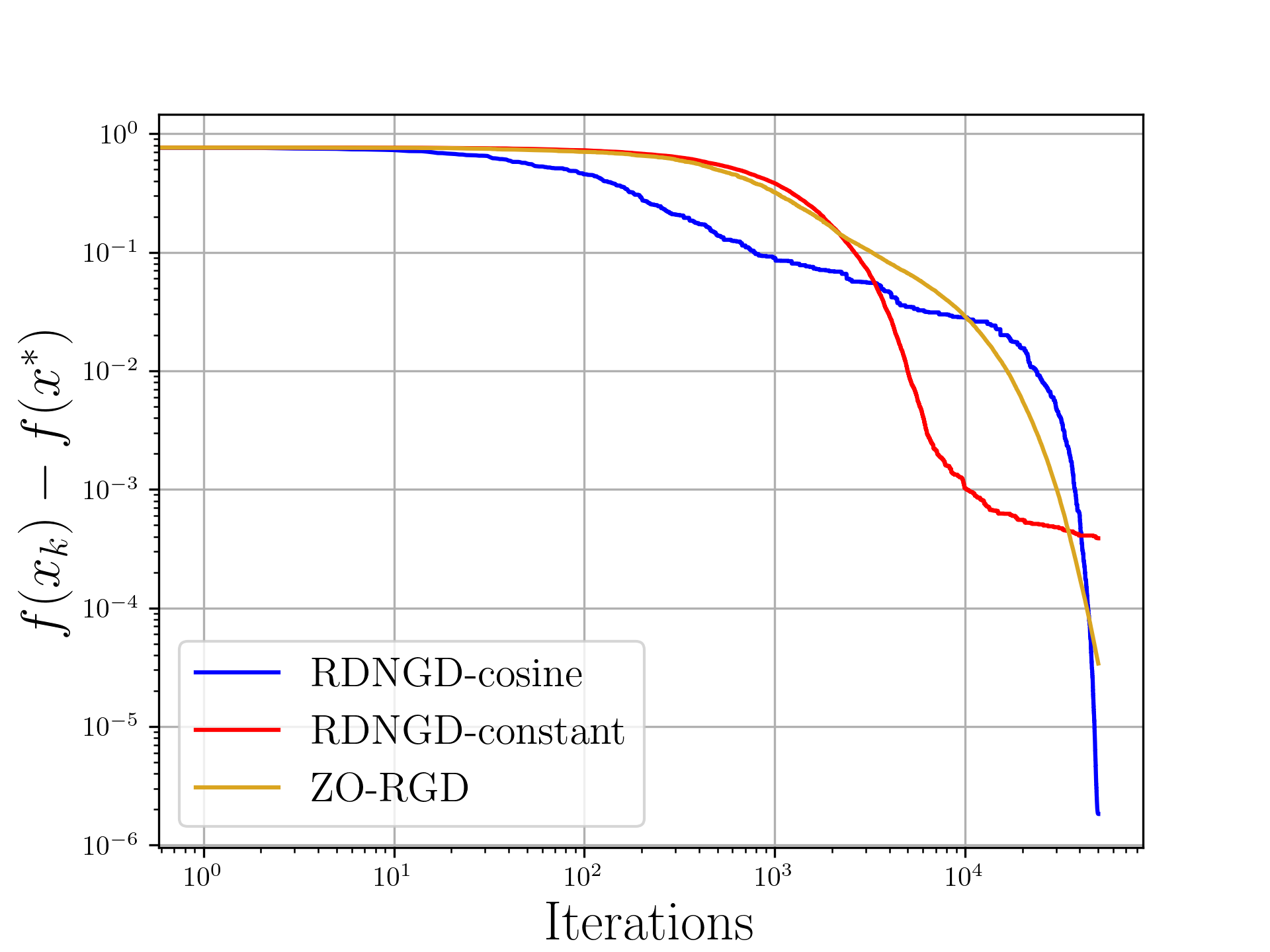}
        \centerline{\small (b) d=150}
    \end{minipage}
    \caption{Numerical results for Rayleigh quotient maximization.}
    \label{fig:rayleigh}
\end{figure}
\vspace{-0.1in}
\subsubsection{Karcher Mean Problem}
In this problem, the goal is to find the geometric mean or Karcher mean of $m$ SPD matrices $\{A_i\}_{i=1}^m \subset \mathcal{S}_{++}^n$, and can be cast as follows:
\begin{equation}\label{prob:Karcher}
    \min_{X \in \mathcal{S}_{++}^n} f(X) := \frac{1}{2m} \sum_{i=1}^m d^2(X, A_i),
\end{equation}
where $d(\cdot,\cdot)$ is the Riemannian distance induced by the affine-invariant metric on $\mathcal{S}_{++}^n$. $f$ in \eqref{prob:Karcher} is geodesically $L$-smooth and geodesically convex on $\mathcal{S}_{++}^n$ \cite{zhang2016firstordermethodsgeodesicallyconvex}. Here we again compare our RDNGD algorithm with the ZO-RGD algorithm. The matrices $A_i$ were randomly generated using the Matrix Mean Toolbox \citep{BINI20131700} with a condition number of $10^6$ and sample size $m=50$. We consider dimensions $n \in \{5, 10\}$. The algorithms are initialized at $X_0$, which is set to the arithmetic mean of all $A_i$'s.
To evaluate the performance, the optimal solution $X^*$ of \eqref{prob:Karcher} is computed using the Richardson-like linear gradient descent algorithm provided in the Matrix Mean Toolbox. RDNGD-constant uses a constant step size $\eta = 10^{-2}$, and RDNGD-cosine employs $\eta_{0}=10^{-1}$ and $\eta_{\min}=10^{-6}$. 
Both RDNGD variants use $\nu=10^{-8}$. For ZO-RGD \citep{li2023stochastic}, we set the smoothing parameter as $10^{-6}$ and the step size as $\frac{1}{20nL}$ and these values are again tuned via a grid search to produce the best results for this problem. All algorithms are run for $T=5,000$ iterations.

\begin{figure}[t]
    \centering
    \begin{minipage}[b]{0.48\linewidth}
        \centering
        \includegraphics[width=\linewidth]{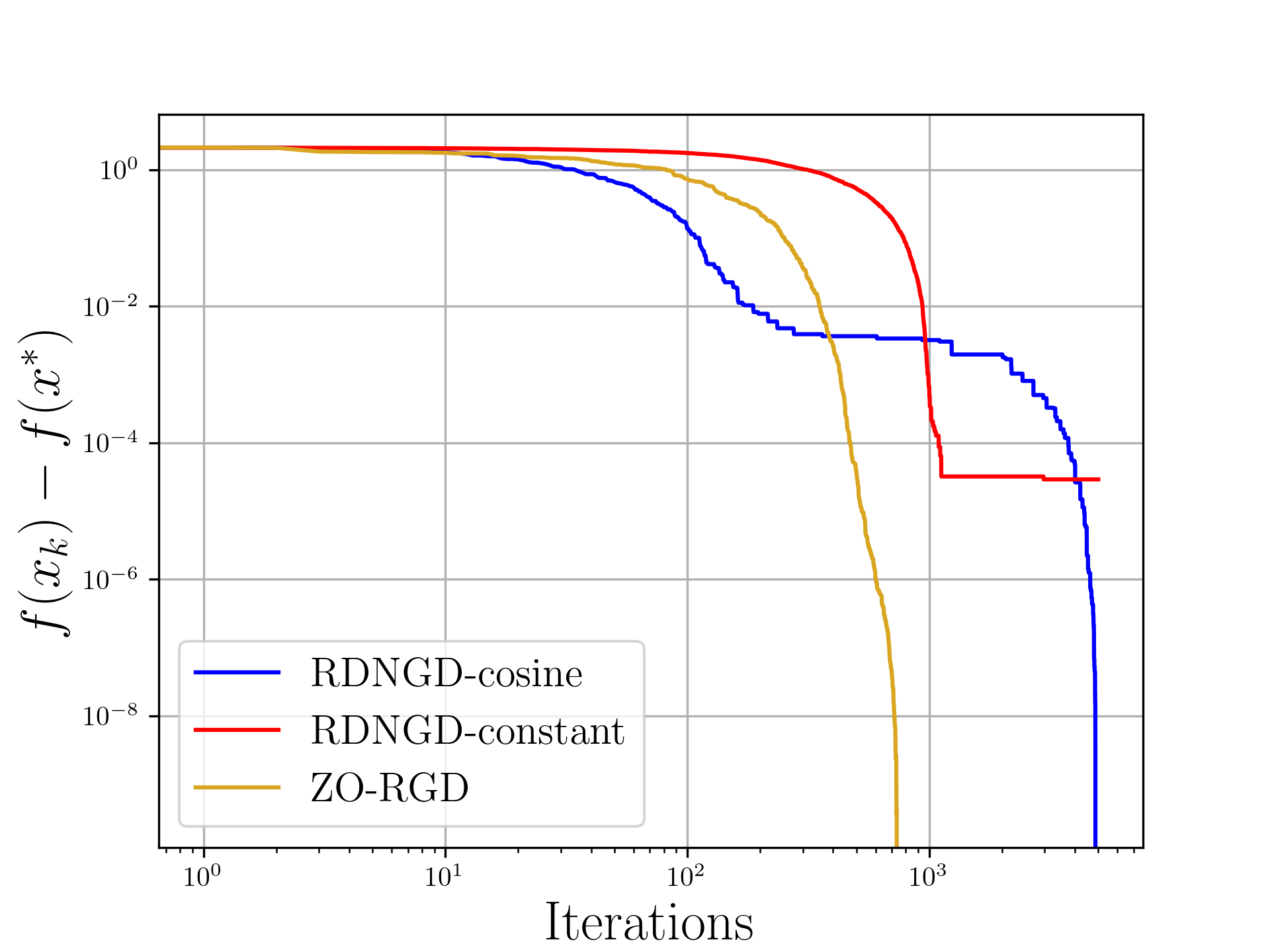}
        \centerline{\small (a) n=5}
    \end{minipage}
    \hfill
    \begin{minipage}[b]{0.48\linewidth}
        \centering
        \includegraphics[width=\linewidth]{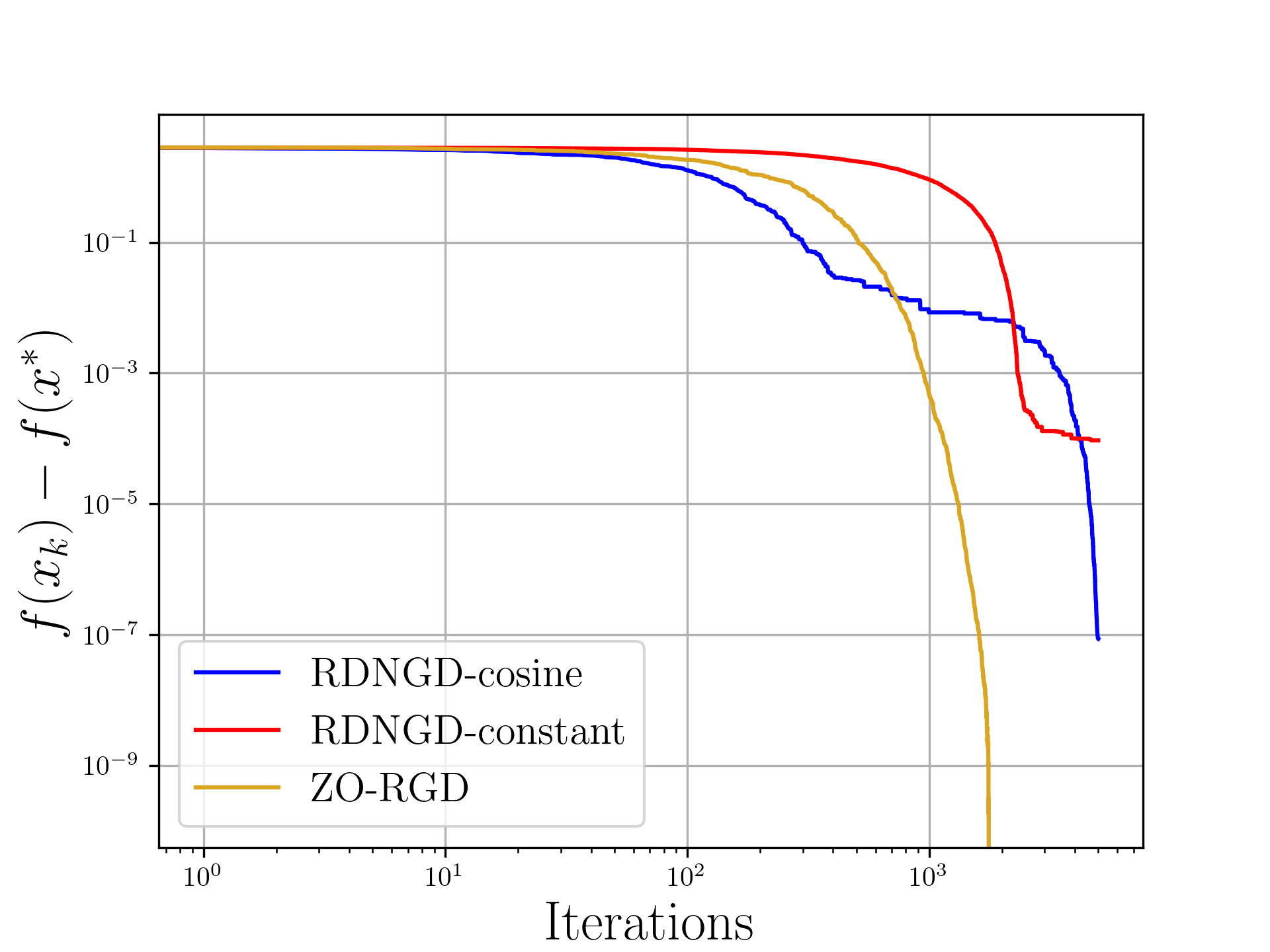}
        \centerline{\small (b) n=10}
    \end{minipage}
    \caption{Numerical result for Karcher mean problem.}
    \label{fig:Karcher_mean_unconstrained}
\end{figure}
Figure~\ref{fig:Karcher_mean_unconstrained} reports the convergence of different methods on \eqref{prob:Karcher} for dimensions $n\in\{5,10\}$. We see that RDNGD-cosine can achieve comparable accuracy with ZO-RGD. RDNGD-constant performed slightly worse but the accuracy is still on an acceptable level. This is again remarkable because RDNGD can only access a comparison oracle. 

\subsubsection{Karcher Mean with Constraints}
\begin{figure}[t]
    \centering
    \begin{minipage}[b]{0.42\linewidth}
        \centering
        \includegraphics[width=\linewidth]{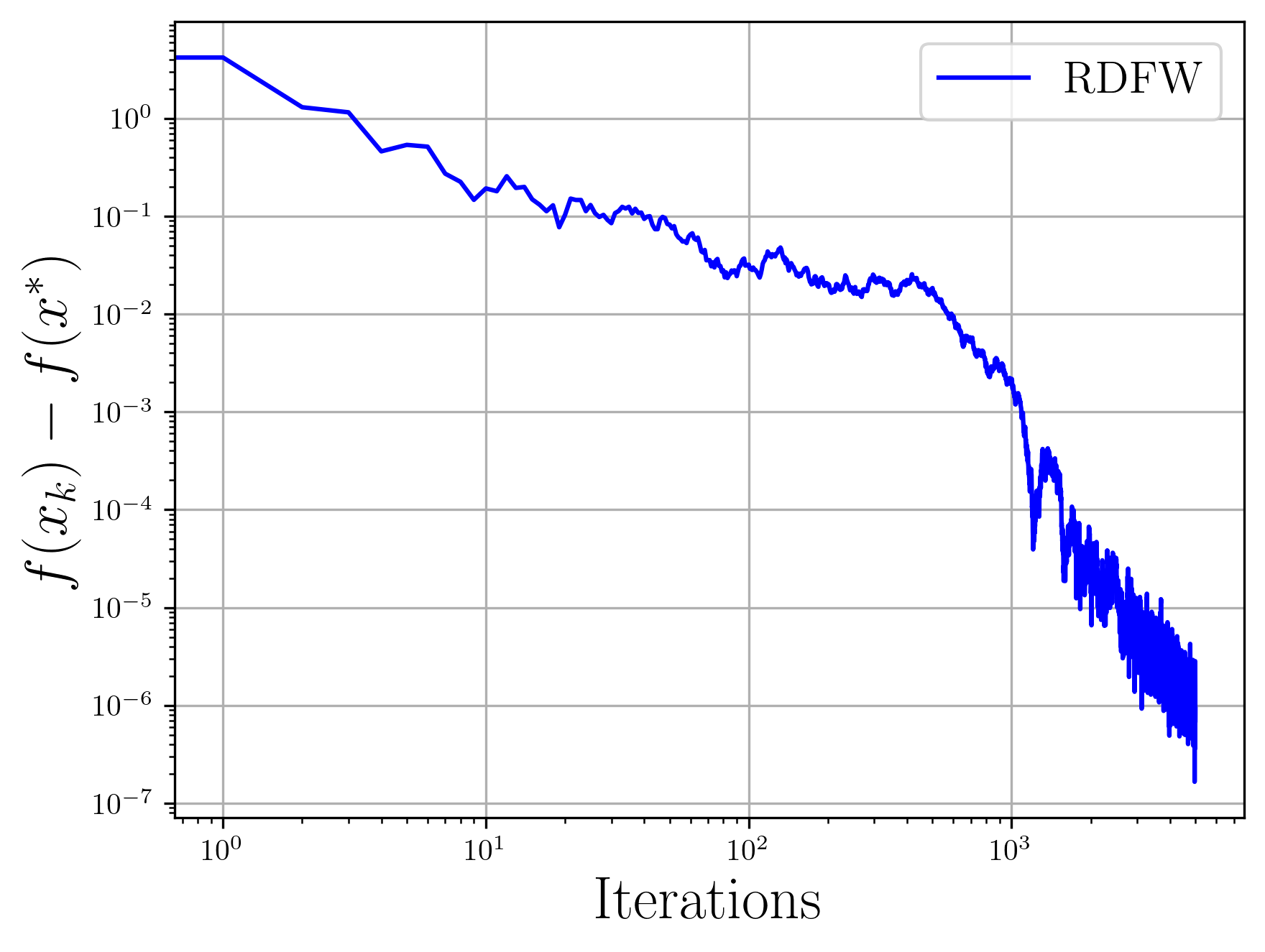}
        \centerline{\small (a) n=5}
    \end{minipage}
    \hfill
    \begin{minipage}[b]{0.42\linewidth}
        \centering
        \includegraphics[width=\linewidth]{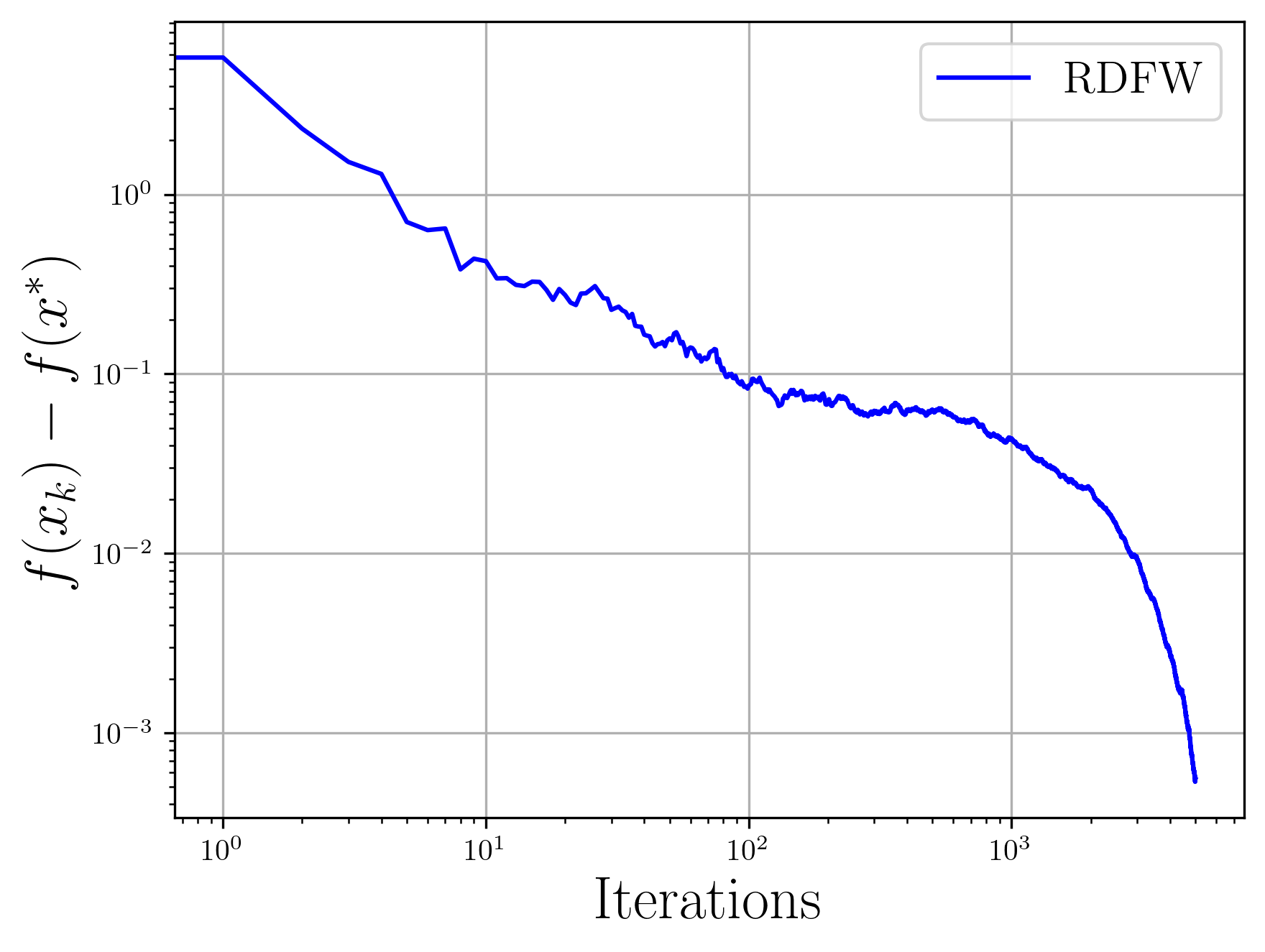}
        \centerline{\small (b) n=10}
    \end{minipage}
    \caption{Numerical result for constrained Karcher mean problem.}
    \label{fig:Karcher_mean_constrained}
\end{figure}
In this subsection, we apply our RDFW method (Algorithm \ref{alg:RFW_batch}) to solve the Karcher mean problem with the constraint $H \preceq X \preceq A$ where $H$ is the harmonic mean, and $A$ is the arithmetic mean \citep{RFW}.
It is a natural constraint as the Karcher mean $G$ satisfies $H \preceq G \preceq A$ \citep{bhatia2007}. Formally, we obtain the following formulation:
\begin{align*}
    \min_{X \in \mathcal{S}_{++}^n,\ H \preceq X \preceq A} ~~~ f(X) := \frac{1}{2m} \sum_{i=1}^m d^2(X, A_i).
\end{align*}
We set $m=50$ and consider dimensions $n \in \{5, 10\}$. The algorithm is initialized at $X_0 = A$. We set the perturbation parameter to $\pert_k=10^{-8}$, the batch size to $M_k=\lceil (k+1)/500\rceil$, and the total number of iterations to $T=5,000$.

Figure~\ref{fig:Karcher_mean_constrained} shows that RDFW converges to high-accuracy solutions for the constrained Karcher mean problem. To our knowledge, RDFW is the first projection-free algorithm to solve this problem using only comparison oracles.
\subsection{Real Applications}
\subsubsection{Attack on Deep Neural Network}
We evaluate our dueling Riemannian attack on the black-box benchmark~\citep{li2023stochastic}, attacking a VGG network on CIFAR-10 under an $\ell_2$-norm constraint. The perturbation is confined to a sphere (dimension $\approx 1000$) centered at the original image $x \in \mathbb{R}^d$. We adopt the exact experimental settings from ~\citep{li2023stochastic}.\footnote{All attacks are implemented by extending the open-source codebase available at \url{https://github.com/JasonJiaxiangLi/Zeroth-order-Riemannian}}For our RDNGD algorithm, we set $\nu=10^{-6}$, step size $\eta=10^{-6}$, and $T=1,000$. We use the batch gradient direction estimator $\bar{h}_k$ \eqref{eq:batchgrad} with batch size of $10$ to mitigate the high variance of the gradient direction estimator in such a high-dimensional space. In contrast, ZO-RGD request 500 samples.
\begin{figure}[t]
    \centering
    \begin{subfigure}[b]{1.0\linewidth}
        \centering
        \setlength{\tabcolsep}{1pt}
        \renewcommand{\arraystretch}{1.0}
        \begin{tabular}{ccccc}
            \includegraphics[width=0.19\linewidth]{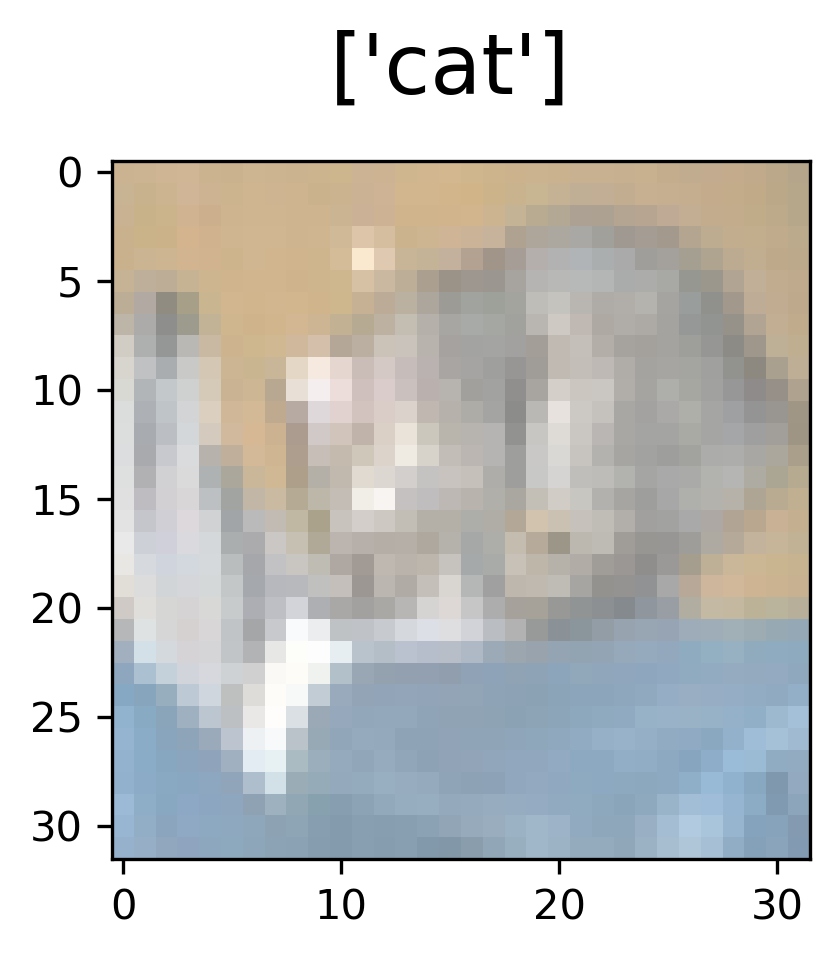} &
            \includegraphics[width=0.19\linewidth]{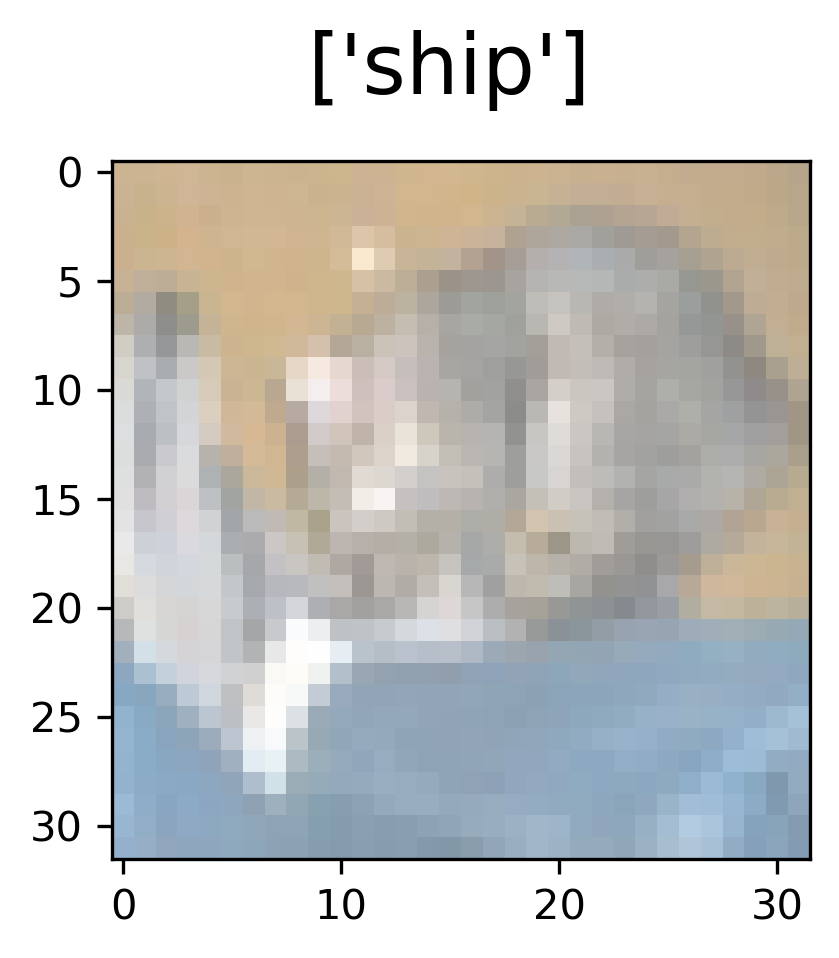} &
            \includegraphics[width=0.19\linewidth]{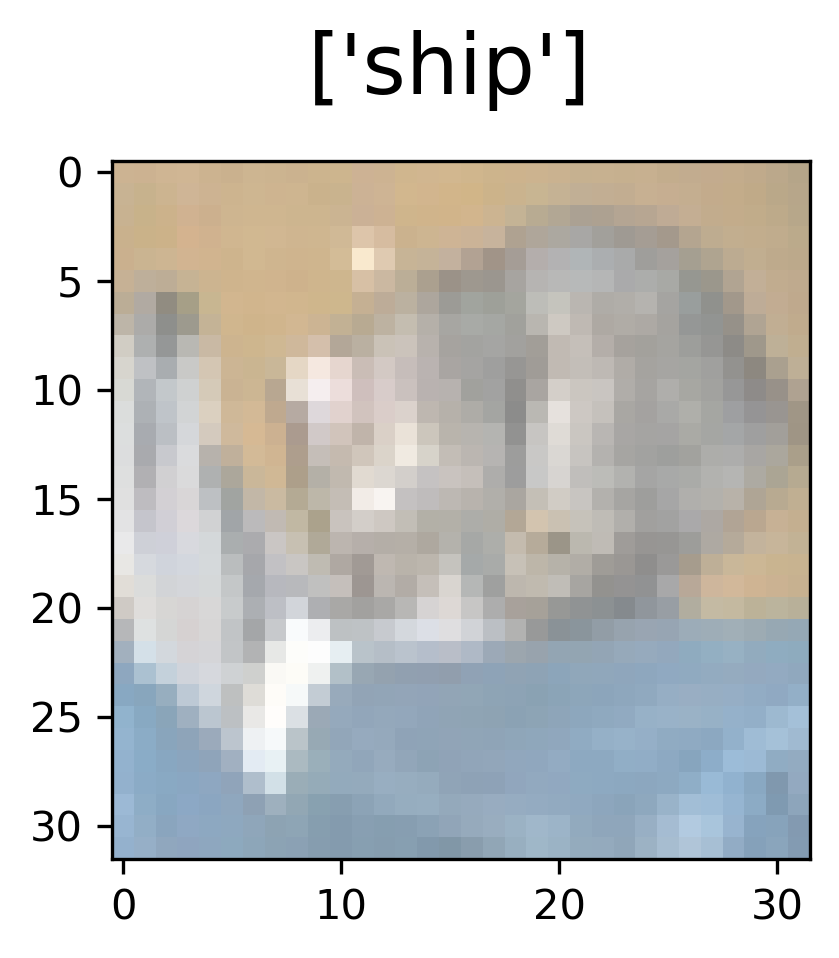} &
            \includegraphics[width=0.19\linewidth]{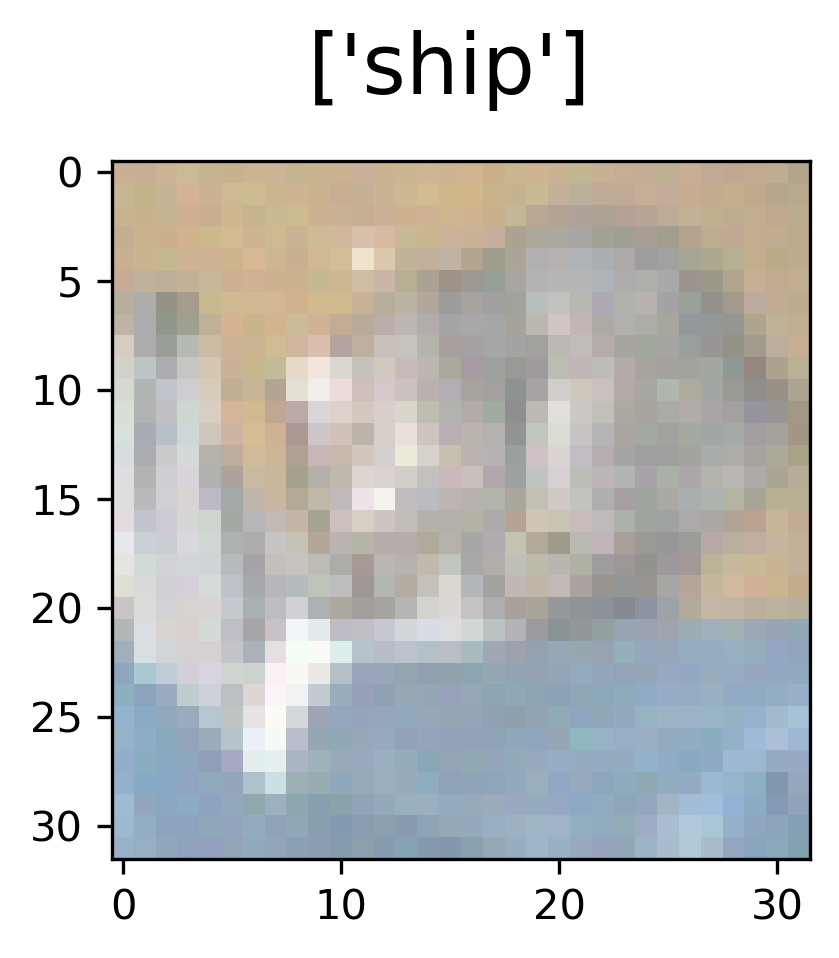} &
            \includegraphics[width=0.19\linewidth]{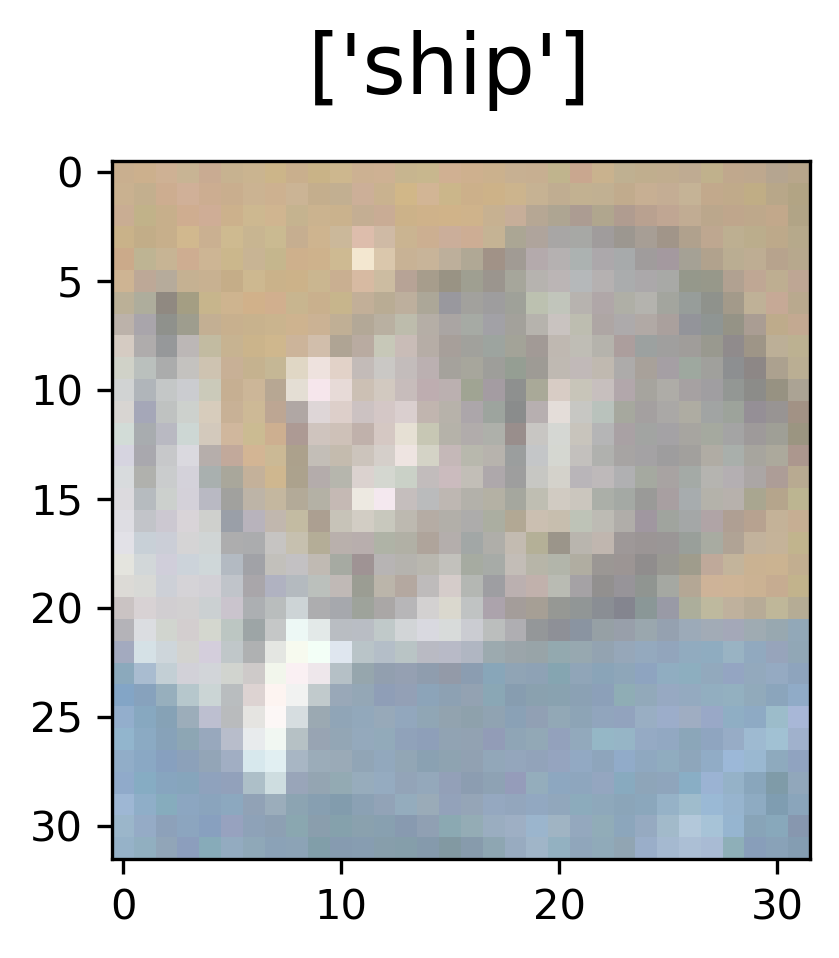} \\
            \scriptsize Original & \scriptsize PGD & \scriptsize RGD & \scriptsize ZO-RGD & \scriptsize RDNGD \\
        \end{tabular}
        \caption{Visual comparison of adversarial examples}
        \label{fig:cifar_visual}
    \end{subfigure}

    \begin{subfigure}[b]{0.48\linewidth}
        \centering
        \includegraphics[width=\linewidth,height=1.1in]{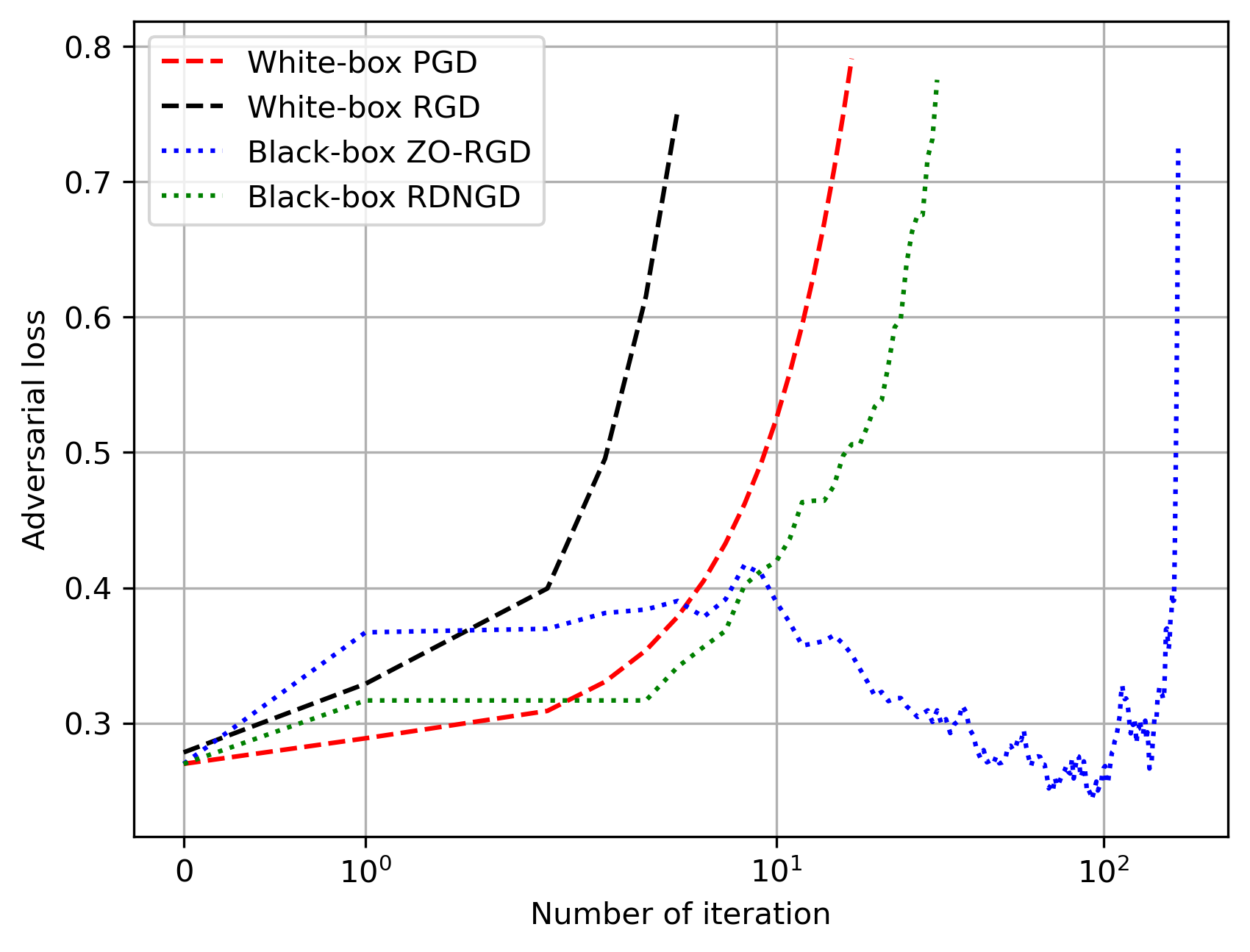}
        \caption{Adversarial loss vs. Iteration}
        \label{fig:cifar_loss_iter}
    \end{subfigure}
    \hfill 
    \begin{subfigure}[b]{0.48\linewidth}
        \centering
        \includegraphics[width=\linewidth,height=1.1in]{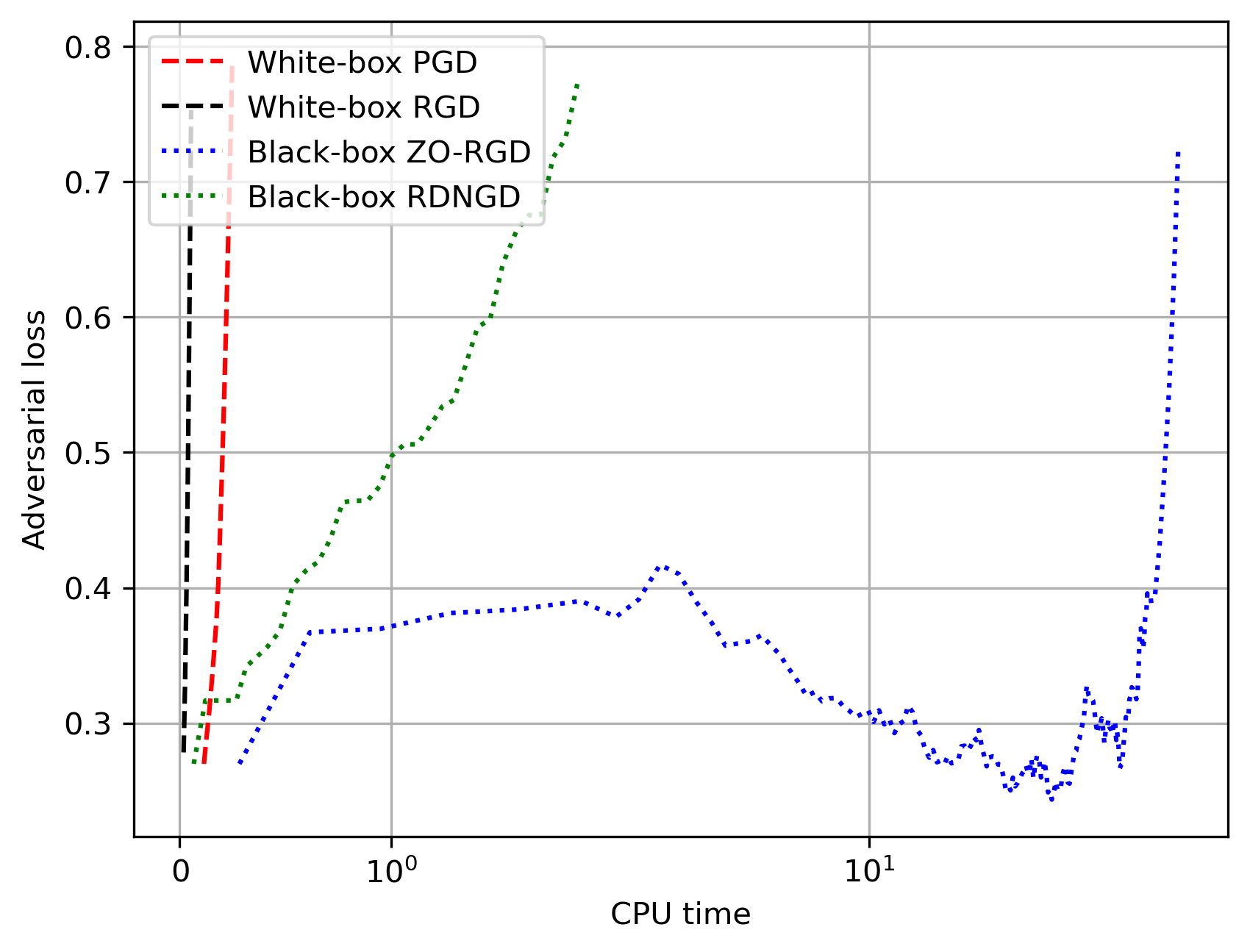}
        \caption{Adversarial loss vs. time}
        \label{fig:cifar_loss_time}
    \end{subfigure}

    \vspace{5pt}

    \caption{Attack results on CIFAR-10. (a) Generated adversarial images. (b) and (c) show the convergence curves. Our estimator yields accurate gradient estimation with only $10$ samples (vs. $500$ for ZO-RGD), demonstrating superior query efficiency.}
    \label{fig:cifar_attack_example}
\end{figure}

Figure~\ref{fig:cifar_attack_example} illustrates representative adversarial examples (additional results in Section~\ref{sec:additiona}) generated by different attacks. The white-box methods PGD and RGD are included as references but are infeasible in our comparison oracle setting. For all these test images, the attacks successfully induce misclassification with visually indistinguishable perturbations to the original image. Among the practical black-box methods, ZO-RGD and our RDNGD attack, the loss curves show that RDNGD yields a more stable optimization trajectory and reaches higher adversarial loss in fewer iterations and less CPU time, indicating a clear efficiency advantage. Moreover, as noted earlier, RDNGD operates with strictly weaker oracle information than ZO-RGD which makes its performance gains even more remarkable.

\subsubsection{Horizon leveling}
In this experiment, we evaluate our methods in the horizon leveling problem presented in Section~\ref{sec:example}. For each image in the HLW dataset~\citep{Workman2016}, we compute $R_{\text{tilt}} \in \mathrm{SO}(2)$ between the human-annotated horizon and the horizontal axis. Our goal is to find the correction rotation $R^\star \in \mathrm{SO}(2)$ that minimizes the misalignment cost $f(R)$:
\[
    \min_{R \in \mathrm{SO}(2)} f(R):= \| R R_{\text{tilt}} - I \|_F^2.
\]
While dueling feedback could be obtained from human comparisons, we use $f(R)$ as a scalable and reproducible surrogate for human preference. Given two rotations $R_1$ and $R_2$, the oracle returns the one with smaller misalignment, providing pairwise preferences on $\mathrm{SO}(2)$ without revealing function values.
We set the maximum iteration number $T=100$, $\nu=10^{-6}$, and constant step size $\eta=10^{-2}$.
\begin{figure}[t]
    \centering
    \begin{subfigure}[b]{0.31\linewidth}
        \centering
        \includegraphics[width=\linewidth]{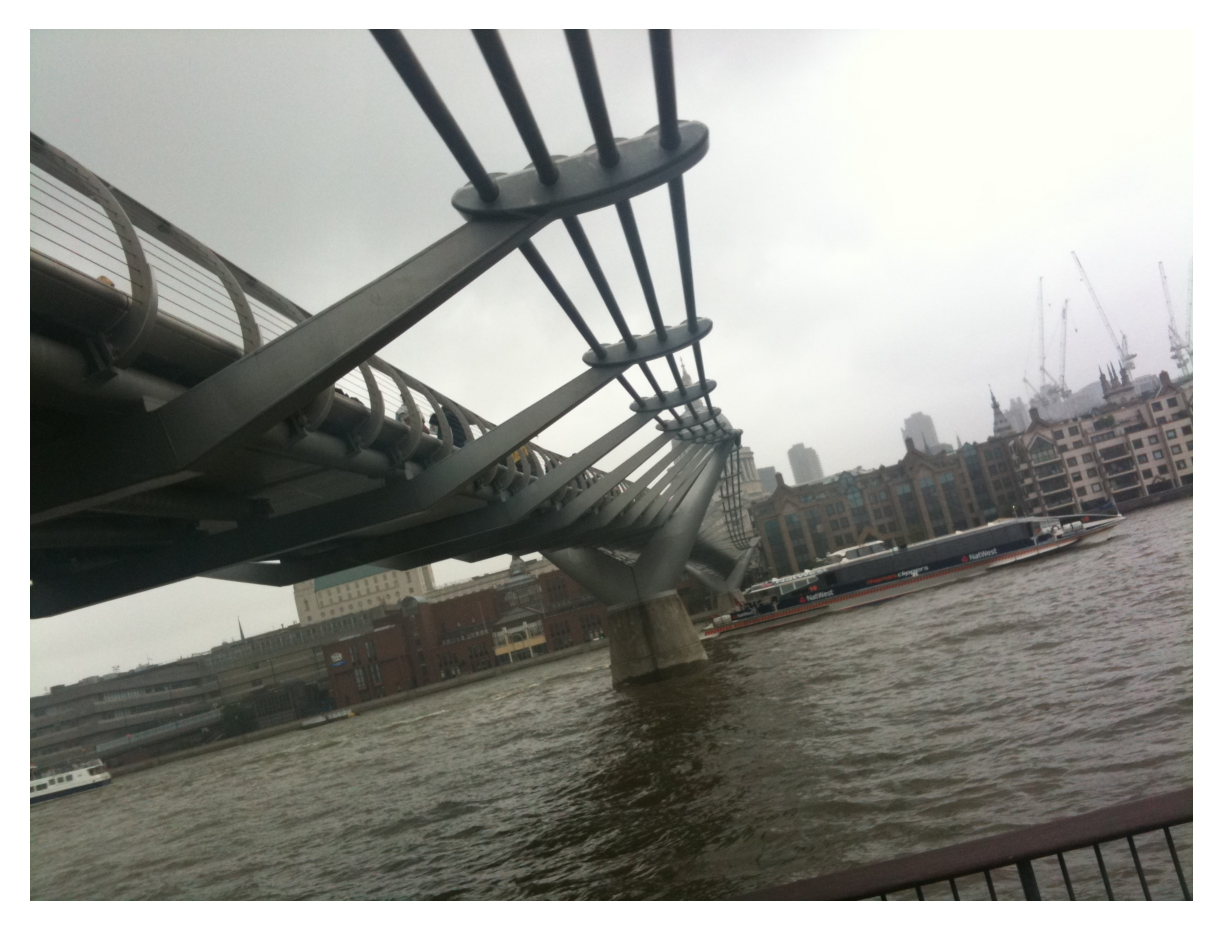}
        \caption{Original}
    \end{subfigure}
    \hfill
    \begin{subfigure}[b]{0.31\linewidth}
        \centering
        \includegraphics[width=\linewidth]{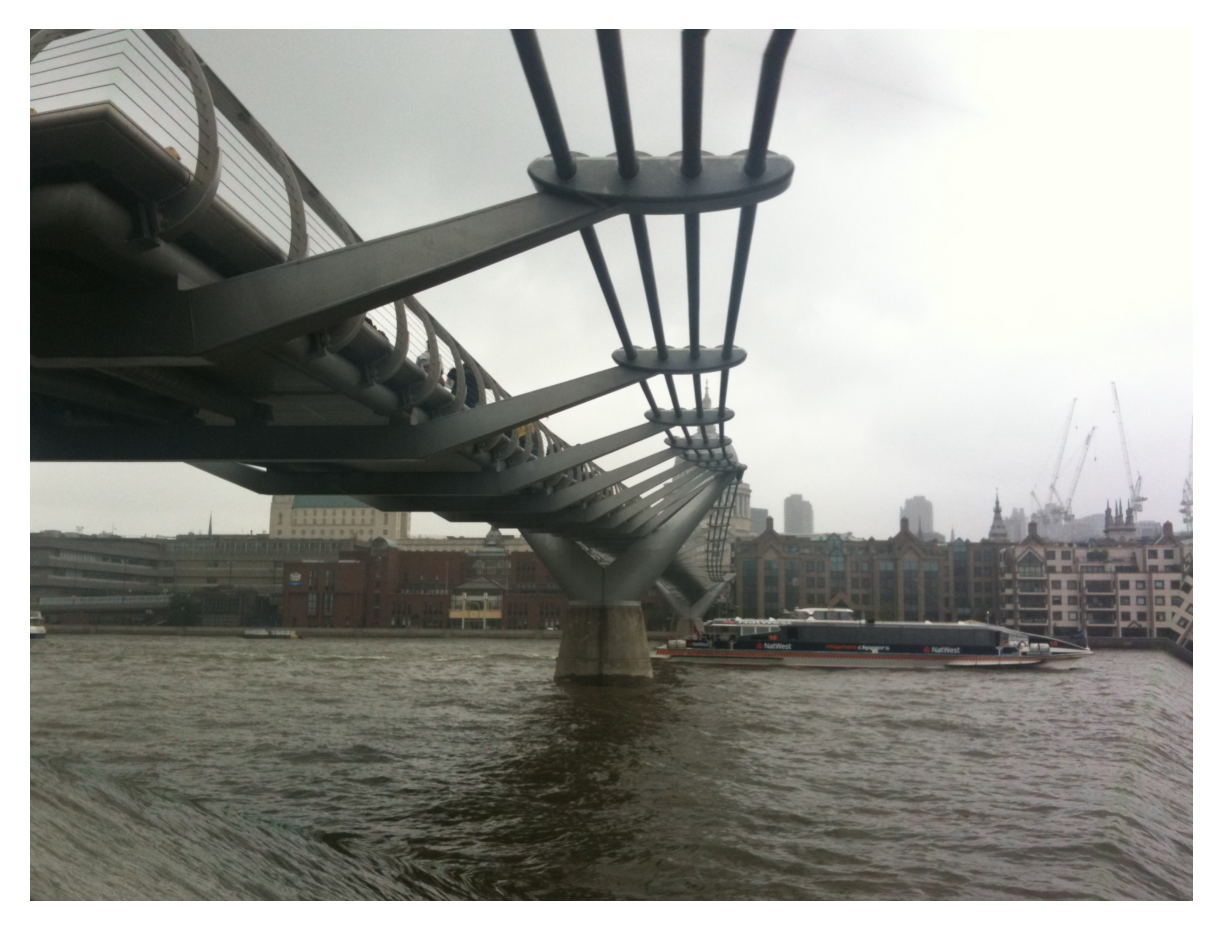}
        \caption{Corrected}
    \end{subfigure}
    \hfill
    \begin{subfigure}[b]{0.35\linewidth}
        \centering
        \includegraphics[width=\linewidth]{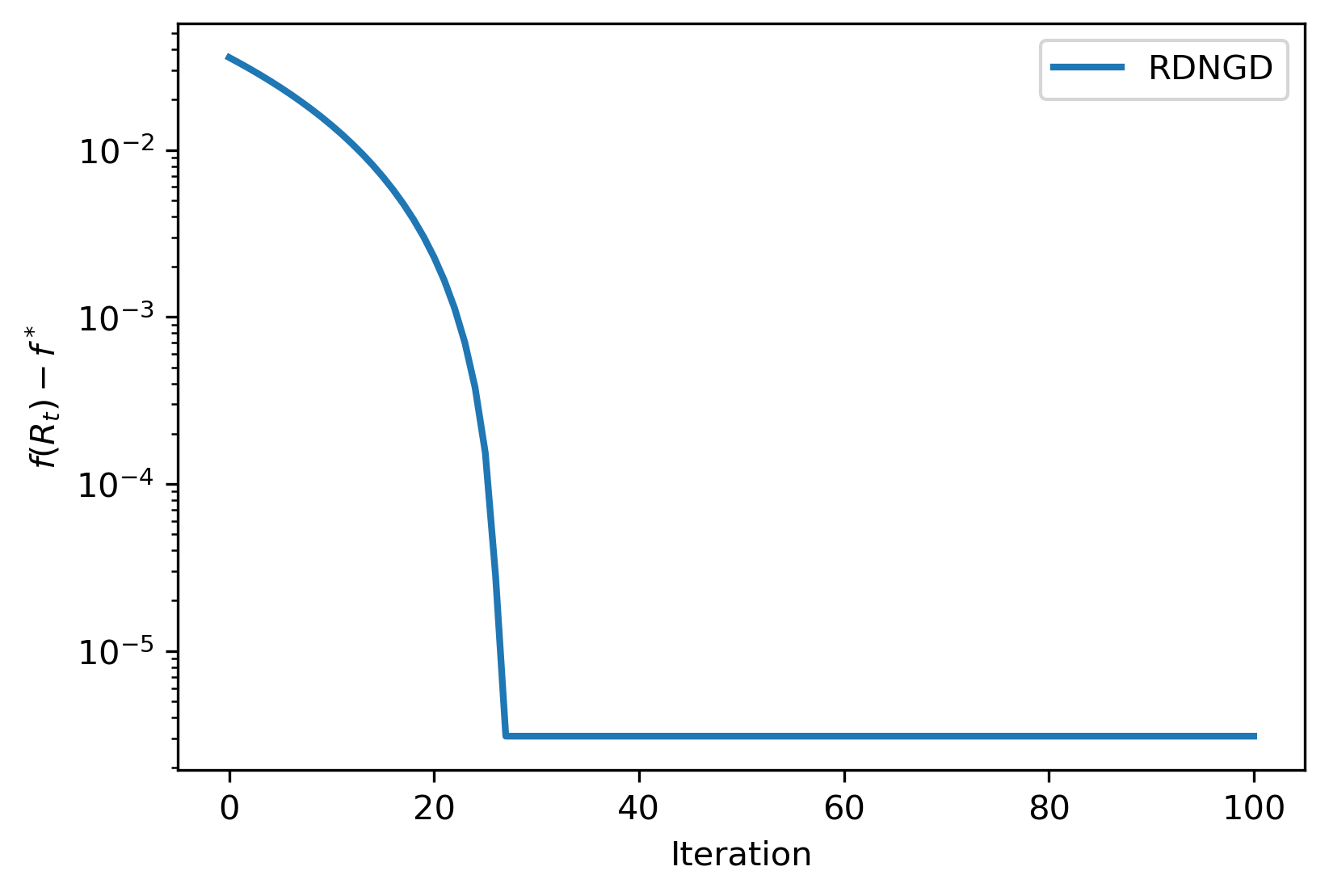}
        \caption{Loss vs Iteration}
    \end{subfigure}

    \caption{Horizon leveling results. The loss stabilizes at $10^{-5}$ due to a large constant step size, chosen to balance convergence speed and the optimality gap.}
    \label{fig:horizon_results_example}
\end{figure}

Figure~\ref{fig:horizon_results_example} shows an example of our dueling horizon-leveling procedure on real image (additional results in Section~\ref{sec:additiona}). Across all cases, suboptimality reaches $\sim 10^{-5}$ within $30$ iterations, showing high accuracy with few iterations despite no access to function values or gradients. In each example, the corrected image looks visually level indicating that the algorithm successfully recovers a good rotation.
\section{Conclusion}
In this paper, we establish the first theoretical framework for Riemannian dueling optimization that uses only comparison oracles. We proposed three algorithms for tackling Riemannian dueling optimization in different scenarios: (i) geodesically $L$-smooth objective; (ii) geodesically $L$-smooth and geodesically (strongly) convex objective where projection onto the constraint set $\mathcal{X}$ is cheap; (iii) geodisically $L$-smooth and geodesically convex objective when projection is prohibited. We established iteration and oracle complexities for each scenario. Moreover, our refined analysis overcomes intrinsic geometric barriers and improves upon existing Euclidean results by offering tighter constants and more flexible hyperparameter selection. Numerical experimental results on both synthetic problems and real applications demonstrated the capability of the proposed algorithms. Our work opens several promising directions on Riemannian optimization with dueling feedback, e.g., accelerated algorithms, Hessian-aware methods for identifying local minima, and exploring whether the noise in direction estimator can facilitate escape from saddle points.

\label{author info}
\label{final author}

\bibliography{reference}
\bibliographystyle{icml2026}
\newpage
\appendix
\onecolumn

\section{Preliminaries on Riemannian Geometry }\label{appendix:prelim}
Let $\mathcal{M}$ be a Riemannian manifold. We introduce some relevant geometric concepts and definitions below.
\begin{definition}[\bf Tangent Vector and Tangent Space]\label{def:tangent_space} 
Let $\mathcal{M}$ be a Riemannian manifold and let $x$ be a point on the manifold $\mathcal{M}$. Define $\mathfrak{F}_x(\mathcal{M})$ to be the set of smooth functions on $\mathcal{M}$ defined on the neighborhood of $x \in\mathcal{M}$. A tangent vector $v_x:\mathfrak{F}_x(\mathcal{M})\rightarrow\mathbb{R}$ at $x$ is a linear operator such that $$v_x(f)=\frac{d(f(\gamma(t)))}{dt}\biggr\rvert_{t=0}, f\in\mathfrak{F}(\mathcal{M}),$$where $\gamma:(-\delta,\delta)\rightarrow\mathcal{M}$ is a smooth curve on $\mathcal{M}$ with $\gamma(0)=x$. The tangent space $\mathrm{T}_x\mathcal{M}$ is the linear space of all tangent vector at $x$.
\end{definition}

\begin{definition}[\bf Riemannian Gradient]\label{def:riemannian_gradient} 
Suppose $f$ is a smooth function on $\mathcal{M}$. The Riemannian gradient $\mathrm{grad} f(x)$ is the unique tangent vector in $\mathrm{T}_x \mathcal{M}$ satisfying
\begin{align*}
\langle v_x, \mathrm{grad} f(x) \rangle_x = v_x(f)\quad\forall v_x \in \mathrm{T}_x \mathcal{M}.
\end{align*}
\end{definition}

Geodesics extend the conceppt of straight line to manifold, and represents the locally distance-minimizing paths.

\begin{definition}[\bf Geodesic]\label{def:geodesic} 
Let $ (\mathcal{M}, g) $ be a Riemannian manifold. A smooth curve $ \gamma: I \to \mathcal{M} $, where $ I \subset \mathbb{R} $ is an interval, is called a geodesic if it satisfies that
\begin{align*}
\nabla_{\dot{\gamma}(t)} \dot{\gamma}(t) = 0 \quad \text{for all } t \in I,
\end{align*}
where $ \nabla $ is the Levi-Civita connection associated with the metric $ g $.
\end{definition}

The exponential mapping at a point $ x \in M $ maps a tangent vector $ v \in \mathrm{T}_x M $ to the point reached by the unique geodesic starting at $ x $ with initial velocity $ v $, evaluated at time $ t = 1 $. 

\begin{definition}[\bf Exponential Mapping]\label{def:exponential_mapping} 
Let $ \mathcal{M} $ be a Riemannian manifold and $ x \in M $. The exponential mapping at $ x $, denoted $ \mathrm{Exp}_x: \mathrm{T}_x \mathcal{M} \to \mathcal{M} $, is defined by
\begin{align*}
\mathrm{Exp}_x(v) = \gamma_v(1),
\end{align*}
where $ \gamma_v: [0,1] \to \mathcal{M} $ is the unique geodesic such that $ \gamma_v(0) = x $ and $ \dot{\gamma}_v(0) = v \in \mathrm{T}_x \mathcal{M} $. That is, $ \mathrm{Exp}_x(v) $ is the endpoint of the geodesic starting at $ x $ with initial velocity $ v $, evaluated at time $ t = 1 $.
\end{definition}

\begin{definition}[\bf Riemannian distance]
Let $(\mathcal{M}, g)$ be a Riemannian manifold with metric tensor $g$. 
For any two points $x,y \in \mathcal{M}$, the Riemannian distance between $x$ and $y$ is defined as
\[
d(x,y) := \inf_{\gamma \in \Gamma(x,y)} \int_{0}^{1} 
    \sqrt{ g_{\gamma(t)}\bigl(\dot{\gamma}(t), \dot{\gamma}(t)\bigr)}  dt,
\]
where $\Gamma(x,y)$ denotes the set of all smooth curves $\gamma:[0,1]\to \mathcal{M}$ such that $\gamma(0)=x$ and $\gamma(1)=y$. 
\end{definition}

\begin{remark}
The exponential mapping $\mathrm{Exp}_x$ is a local diffeomorphism. That is, within a open neighborhood (restricted by the manifold), $\mathrm{Exp}_x$ is invertible and well defined. We use $\mathrm{Log}_{x}$ to denote the invert map $\mathrm{Exp}_x^{-1}$.
\end{remark}

Tangent vectors at two different points lie in different spaces, and thus cannot be directly compared. To solve this issue, parallel transport is defined to move a tangent vector along the geodesics in the meanwhile preserving the length and direction.
\begin{definition}[\bf Parallel Transport]\label{def:parallel_tranposrt} 
Let $ \gamma: [0,1] \to \mathcal{M} $ be a smooth curve on a Riemannian manifold $ \mathcal{M} $, and let $ V(t) \in T_{\gamma(t)} \mathcal{M} $ be a vector field along $ \gamma $. The vector field $ V $ is said to be parallel along $ \gamma $ if it satisfies
\begin{align*}
\nabla_{\dot{\gamma}(t)} V(t) = 0 \quad \text{for all } t \in [0,1].
\end{align*}
Given an initial vector $ v_0 \in T_{\gamma(0)} \mathcal{M} $, there exists a unique parallel vector field $ V(t) $ such that $ V(0) = v_0 $. The vector $ V(1) \in T_{\gamma(1)} \mathcal{M} $ is called the parallel transport of $ v_0 $ along $ \gamma $.
\end{definition}

\section{Some Useful Lemmas}
In the proof provided in the appendix, we adopt the following notation: for any positive integer $N$, let $[N]$ denote the set $\{0, 1, \ldots, N\}$. Additionally, we define $\mathcal{U}_k = \{u_0, u_1, \ldots, u_k\}$ as the history containing all information available up to step $k$.

\begin{lemma}\label{lem:cosine_annealing_step sizes}
Let $\{\eta_k\}_{k=0}^{T-1}$ be the cosine annealing step size defined in Definition \ref{def:cosine_annealing_step size}\footnote{Here, similar to \cite{li2021secondlookexponentialcosine}, we assume $\eta_{\min}=0$ for simplicity.}. It holds that:
\begin{itemize}[leftmargin=*]
  \item 
  $\displaystyle \sum_{k=0}^{T-1} \cos\left(\frac{\pi k}{T}\right) = 1.$
  \item 
  $\displaystyle \sum_{k=0}^{T-1} \eta_k
      = \frac{\eta_0}{2}\left( T + \sum_{k=0}^{T-1}\cos\left(\frac{\pi k}{T}\right) \right)
      = \frac{\eta_0}{2}(T+1).$
  \item 
  $\displaystyle \sum_{k=0}^{T-1} \eta_k^{2}
      = \frac{\eta_0^{2}}{8}(3T+4).$
\end{itemize}
\end{lemma}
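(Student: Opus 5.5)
The plan is to reduce all three identities to elementary trigonometric sums, working throughout with $\eta_{\min}=0$ as in the footnote, so that $\eta_k=\frac{\eta_0}{2}\bigl(1+\cos(\frac{\pi k}{T})\bigr)$. I will assume $T\ge 2$. The third identity fails at $T=1$: the left side is $\eta_0^2$, while the right side is $\frac{7}{8}\eta_0^2$. Since $T>1$ is the standing input in Algorithm~\ref{alg:alg1}, this restriction costs nothing, and I will state it explicitly.

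\textbf{First identity.} I will use a pairing (reflection) argument. For $1\le k\le T-1$, the index $T-k$ also lies in $\{1,\dots,T-1\}$, and
\[
\cos\Bigl(\frac{\pi(T-k)}{T}\Bigr)=\cos\Bigl(\pi-\frac{\pi k}{T}\Bigr)=-\cos\Bigl(\frac{\pi k}{T}\Bigr).
\]
Hence $\sum_{k=1}^{T-1}\cos(\frac{\pi k}{T})$ equals its own negative and is therefore $0$. When $T$ is even, the middle term $k=T/2$ is paired with itself, and the identity just says that term is $\cos(\pi/2)=0$, so no case split is needed. Adding the $k=0$ term $\cos 0=1$ gives the total $1$. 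As a cross-check, with $z=e^{i\pi/T}$ we have $z^T=-1$, so the sum equals $\mathrm{Re}\,\frac{2}{1-z}=\frac{2(1-\cos(\pi/T))}{2-2\cos(\pi/T)}=1$.

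\textbf{Second identity.} This follows immediately by summing the formula for $\eta_k$ and applying the first identity:
\[
\sum_{k=0}^{T-1}\eta_k=\frac{\eta_0}{2}\Bigl(T+\sum_{k=0}^{T-1}\cos\bigl(\tfrac{\pi k}{T}\bigr)\Bigr)=\frac{\eta_0}{2}(T+1).
\]

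\textbf{Third identity.} Expanding the square gives
\[
\eta_k^2=\frac{\eta_0^2}{4}\Bigl(1+2\cos\bigl(\tfrac{\pi k}{T}\bigr)+\cos^2\bigl(\tfrac{\pi k}{T}\bigr)\Bigr).
\]
I then apply $\cos^2\theta=\frac{1+\cos 2\theta}{2}$. The only new ingredient is $\sum_{k=0}^{T-1}\cos(\frac{2\pi k}{T})=0$, which holds because this is the real part of the sum of all $T$-th roots of unity, and that sum vanishes for $T\ge 2$. Combining the three pieces:
\[
\sum_{k=0}^{T-1}\eta_k^2=\frac{\eta_0^2}{4}\Bigl(T+2\cdot 1+\frac{T}{2}\Bigr)=\frac{\eta_0^2}{8}(3T+4).
\]

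\textbf{Main obstacle.} There is no real obstacle here. The only points needing care are the bookkeeping of the middle index in the pairing argument (handled above) and the edge case $T=1$ in the roots-of-unity step, which the assumption $T\ge 2$ excludes.
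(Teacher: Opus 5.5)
Your proof is correct and matches the paper's argument for the second and third identities. The one difference is in the first identity: you give a self-contained reflection argument ($k\mapsto T-k$ on $\{1,\dots,T-1\}$), whereas the paper quotes $\sum_{k=1}^{T}\cos(k\pi/T)=-1$ from the cosine-annealing literature and shifts the summation range. Your caveat that the third identity needs $T\ge 2$ is also right: at $T=1$ one has $\sum_{k=0}^{T-1}\cos(2\pi k/T)=1\neq 0$, and the paper's proof passes over this silently, though it is harmless because RDNGD is run with $T>1$.
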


\begin{proof}
We first prove part (i). According to \cite{li2021secondlookexponentialcosine}, it holds that $\sum_{k=1}^{T} \cos\left(\frac{k\pi}{T}\right) = -1$. Adjusting the summation range to $k=0, \dots, T-1$, we obtain
\begin{align*}
\sum_{k=0}^{T-1} \cos\left(\frac{k\pi}{T}\right) 
= \sum_{k=1}^{T} \cos\left(\frac{k\pi}{T}\right) - \cos(\pi) + \cos(0) 
= -1 - (-1) + 1 
= 1.
\end{align*}
Part (ii) follows immediately by substituting the result of part (i) into the definition of $\eta_k$:
\[
\sum_{k=0}^{T-1} \eta_k = \frac{\eta_0}{2}\left(T + \sum_{k=0}^{T-1}\cos\left(\frac{\pi k}{T}\right)\right) = \frac{\eta_0}{2}(T+1).
\]
For part (iii), expanding the square of $\eta_k$ yields
\begin{align*}
\sum_{k=0}^{T-1} \eta_k^{2}
&= \frac{\eta_0^{2}}{4}\sum_{k=0}^{T-1}\left(1 + 2\cos\left(\frac{\pi k}{T}\right) + \cos^{2}\left(\frac{\pi k}{T}\right)\right).
\end{align*}
Using the identity $\cos^2(x) = \frac{1+\cos(2x)}{2}$ and the fact that $\sum_{k=0}^{T-1} \cos(\frac{2\pi k}{T}) = 0$, we have
\[
\sum_{k=0}^{T-1} \cos^2\left(\frac{\pi k}{T}\right) = \sum_{k=0}^{T-1} \frac{1}{2} + \frac{1}{2}\sum_{k=0}^{T-1}\cos\left(\frac{2\pi k}{T}\right) = \frac{T}{2}.
\]
Substituting this result and the conclusion from part (i) back into the expansion gives
\begin{align*}
\sum_{k=0}^{T-1} \eta_k^{2}
&= \frac{\eta_0^{2}}{4}\left( T + 2(1) + \frac{T}{2} \right)
= \frac{\eta_0^{2}}{8}(3T+4).
\end{align*}
This completes the proof.
\end{proof}

\begin{lemma}\label{lem:riemannian_cosine_law}[\cite{zhang2016firstordermethodsgeodesicallyconvex} Corollary 8]
For any Riemannian manifold $\mathcal{M}$ where the sectional curvature is lower bounded by $\kappa$, the following holds for any $x, x_k \in \mathcal{X}$ and $x_{k+1} = \mathcal{P}_{\mathcal{X}}(\operatorname{Exp}_{x_k}(-\eta_k g_k))$:
\begin{equation*}
\langle -g_k, \Log_{x_k}(x) \rangle \leq \frac{1}{2\eta_k} (d^2(x_k, x) - d^2(x_{k+1}, x)) + \frac{\zeta(\kappa, d(x_k, x))\eta_k}{2} \|g_k\|^2, 
\end{equation*}
where $\zeta(\kappa, d(x_k, x)) = \frac{\sqrt{|\kappa|} d(x_k, x)}{\tanh(\sqrt{|\kappa|} d(x_k, x))}$.
\end{lemma}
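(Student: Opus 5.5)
The plan is to reduce the claim to a comparison inequality for geodesic triangles, and then absorb the projection step using nonexpansiveness. Write $y := \mathrm{Exp}_{x_k}(-\eta_k g_k)$ for the unprojected point, so that $x_{k+1} = \mathcal{P}_{\mathcal{X}}(y)$. I will first prove the inequality with $x_{k+1}$ replaced by $y$, and then pass to $x_{k+1}$.

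The key tool is the trigonometric distance bound for manifolds with sectional curvature bounded below by $\kappa$ (Lemma 5 in \cite{zhang2016firstordermethodsgeodesicallyconvex}). Consider a geodesic triangle with side lengths $a,b,c$, where $A$ is the angle between the sides of lengths $b$ and $c$. The bound states
\[
a^2 \le \zeta(\kappa,c)\, b^2 + c^2 - 2bc\cos A .
\]
Its proof proceeds in two steps. First, Toponogov's comparison theorem reduces it to the model space of constant curvature $\kappa$; for $\kappa\ge 0$ the Euclidean law of cosines already suffices, since $\zeta\ge 1$. Second, one analyzes the hyperbolic law of cosines, $\cosh(\sqrt{|\kappa|}a)=\cosh(\sqrt{|\kappa|}b)\cosh(\sqrt{|\kappa|}c)-\sinh(\sqrt{|\kappa|}b)\sinh(\sqrt{|\kappa|}c)\cos A$. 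Viewing $a^2$ as a function of $b$ for fixed $c$ and $A$, one shows that its second derivative in $b$ is bounded by $2\sqrt{|\kappa|}c/\tanh(\sqrt{|\kappa|}c)$. Integrating twice from $b=0$ then gives the bound. I expect this hyperbolic calculus to be the main obstacle. I would either take it directly from the cited reference or carry out the convexity estimate on $\tfrac{d^2}{db^2}a^2$ via the hyperbolic identity.

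I apply the bound to the triangle with vertices $x_k$, $y$, $x$, taking the angle at $x_k$. The sides are $b=d(x_k,y)=\eta_k\|g_k\|$, $c=d(x_k,x)$ and $a=d(y,x)$. Since the two sides at $x_k$ are the geodesics with initial velocities $-\eta_k g_k$ and $\Log_{x_k}(x)$, we have $bc\cos A=\langle -\eta_k g_k,\Log_{x_k}(x)\rangle$. This step uses that $\Log_{x_k}$ is well defined and that these geodesics realize the distances, which is provided by the standing uniquely-convex / diffeomorphism assumptions. Substituting these values gives
\[
d^2(y,x)\le \zeta(\kappa,d(x_k,x))\,\eta_k^2\|g_k\|^2 + d^2(x_k,x) + 2\eta_k\langle g_k,\Log_{x_k}(x)\rangle .
\]
Rearranging and dividing by $2\eta_k$ yields exactly the claimed inequality, with $d^2(y,x)$ in place of $d^2(x_{k+1},x)$.

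Finally, since $x\in\mathcal{X}$ we have $\mathcal{P}_{\mathcal{X}}(x)=x$. Assumption~\ref{as:proj} then gives $d(x_{k+1},x)=d(\mathcal{P}_{\mathcal{X}}(y),\mathcal{P}_{\mathcal{X}}(x))\le d(y,x)$, so $-d^2(y,x)\le -d^2(x_{k+1},x)$. This replacement only enlarges the right-hand side, which completes the argument.
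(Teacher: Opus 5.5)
Your proposal is correct and follows the argument behind the cited Corollary~8 of Zhang and Sra. That argument applies their trigonometric distance bound, obtained from Toponogov comparison plus the hyperbolic law of cosines, to the triangle formed by $x_k$, $\mathrm{Exp}_{x_k}(-\eta_k g_k)$ and $x$; you then handle the projection correctly via $\mathcal{P}_{\mathcal{X}}(x)=x$ and the nonexpansiveness in Assumption~\ref{as:proj}. One small refinement: the hinge form of Toponogov only needs the side from $x_k$ to $x$ to be minimizing, with $b=\eta_k\|g_k\|$ taken as the length of the geodesic to $y$, so you need not claim that the standing assumptions make the geodesic to a possibly infeasible $y$ distance-realizing.
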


\begin{lemma}\label{lem:xt_gt_upper_bound}
Suppose $f : \mathcal{M} \rightarrow \mathbb{R}$ is geodesically ${L}$-smooth. Consider the updates rule in Algorithm~\ref{alg:alg1}. Then we have for all $k \in [T]$,
\begin{align*}
d(x_{k}, x^*) \leq \sqrt{D} + \sum_{t=0}^{k-1}\eta_t 
\quad \text{and} \quad
\| \rgrad f(x_{k}) \| \leq {L} \left(\sqrt{D} + \sum_{t=0}^{k-1}\eta_t\right).
\end{align*}
\end{lemma}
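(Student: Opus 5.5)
We argue by induction on $k$, using only the triangle inequality for the Riemannian distance and the fact that each RDNGD step moves the iterate by a geodesic of length at most $\eta_k$. The base case $k=0$ is immediate: $d(x_0,x^*)=\sqrt{D}$ by the definition $D=d^2(x_0,x^*)$, and the sum is empty.

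For the inductive step, we first bound a single iteration. The estimator $h_\pert(x_k)=\pm u_k$ with $\|u_k\|_{x_k}=1$, so $\|\eta_k h_\pert(x_k)\|=\eta_k$. Let $y_{k+1}:=\mathrm{Exp}_{x_k}(-\eta_k h_\pert(x_k))$. Since $t\mapsto \mathrm{Exp}_{x_k}(-t\eta_k h_\pert(x_k))$, $t\in[0,1]$, is a curve of length $\eta_k$, the definition of distance as an infimum of curve lengths gives $d(x_k,y_{k+1})\le\eta_k$.

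The projection needs care. In the unconstrained case $x_{k+1}=y_{k+1}$ and there is nothing to do. In the constrained case, $x^*\in\mathcal{X}$, so $\mathcal{P}_{\mathcal{X}}(x^*)=x^*$, and Assumption~\ref{as:proj} gives $d(x_{k+1},x^*)=d(\mathcal{P}_{\mathcal{X}}(y_{k+1}),\mathcal{P}_{\mathcal{X}}(x^*))\le d(y_{k+1},x^*)$. The lemma as stated omits this assumption, but it is needed whenever a genuine projection occurs; we would state that we invoke it in that case. In either case the triangle inequality yields
\[
d(x_{k+1},x^*)\le d(x_k,x^*)+\eta_k\le\sqrt{D}+\sum_{t=0}^{k}\eta_t,
\]
which completes the induction.

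For the gradient bound, we need $\mathrm{grad} f(x^*)=0$, which holds in the unconstrained case, or when $x^*$ is interior. We then apply the geodesic $L$-smoothness of Definition~\ref{as:lipgrad} with $y=x^*$:
\[
\|\mathrm{grad} f(x_k)\|=\|\mathrm{grad} f(x_k)-\Gamma^{x_k}_{x^*}\mathrm{grad} f(x^*)\|\le L\,d(x_k,x^*),
\]
and substitute the distance bound. The only real subtlety is this stationarity requirement at $x^*$ together with the projection step. Everything else is direct.
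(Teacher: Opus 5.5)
Your proof is correct and follows the same route as the paper. It combines the triangle inequality, the step-length bound $d(x_k,\mathrm{Exp}_{x_k}(-\eta_k h_\pert(x_k)))\le\eta_k$, nonexpansiveness of $\mathcal{P}_{\mathcal{X}}$, and then $L$-smoothness tested against $x^*$. The only difference is where nonexpansiveness is applied: the paper uses it on the pair $(\mathrm{Exp}_{x_t}(-\eta_t h_t),x_t)$ and telescopes, whereas you use it on $(y_{k+1},x^*)$ and induct, which is equivalent.

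Your caveat that the gradient bound needs $\rgrad f(x^*)=0$ is well taken. The paper's proof uses this silently, and it can fail for a boundary minimizer in the constrained setting. However, only the distance bound is invoked later, in the proof of Theorem~\ref{thm:beta_smoothness_convex_complexityapp}.
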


\begin{proof}
Using non-expansiveness of the projection operator, we have that
\begin{align*}
d(x_{k} , x^* )
&\leq \sum_{t=0}^{k-1} d(x_{t+1} ,x_t ) + d( x_0 , x^* )\\
&= \sum_{t=0}^{k-1} d(\mathcal{P}_\mathcal{X}(\mathrm{Exp}_{x_t}(-\eta_t h_t)) ,\mathcal{P}_\mathcal{X}(x_t) ) + d( x_0 , x^* )\\
&\leq \sum_{t=0}^{k-1} d(\mathrm{Exp}_{x_t}(-\eta_t h_t) ,x_t ) + d( x_0 , x^* )\\
&\leq \sum_{t=0}^{k-1}\eta_t + \sqrt{D}.
\end{align*}
From that $f$ is geodesically ${L}$-smooth, we have
\begin{align*}
\| \rgrad f(x_{k})\|=
\| \rgrad f(x_{k}) - \Gamma_{x^*}^{x_{k}}\rgrad f(x^*) \|
\leq {L} d( x_{k} , x^* )
\leq {L} \left(\sqrt{D}+\sum_{t=0}^{k-1}\eta_t\right).
\end{align*}
This completes the proof. 
\end{proof}

\begin{lemma}\label{lem:gt_lower_bound}
Suppose $f : \mathcal{M} \rightarrow \mathbb{R}$ is a geodesically convex function. Then $f(x_k)-f(x^*)>\epsilon$ implies $\|\rgrad f(x_k)\|\geq {\epsilon}/{d(x_k,x^*)}.$
\end{lemma}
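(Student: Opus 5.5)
Lemma~\ref{lem:gt_lower_bound} follows directly from the geodesic convexity inequality \eqref{geo-convex-inequality} with $\alpha=0$, combined with Cauchy--Schwarz in $\mathrm{T}_{x_k}\mathcal{M}$.

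First, apply \eqref{geo-convex-inequality} at the base point $x=x_k$ with $y=x^*$. Both points lie in the geodesically uniquely convex set $\mathcal{X}$, so $\Log_{x_k}(x^*)$ is well defined and $\|\Log_{x_k}(x^*)\|_{x_k}=d(x_k,x^*)$. This gives
\[
f(x^*) \ge f(x_k) + \langle \rgrad f(x_k), \Log_{x_k}(x^*)\rangle_{x_k}.
\]
Rearranging,
\[
f(x_k)-f(x^*) \le -\langle \rgrad f(x_k), \Log_{x_k}(x^*)\rangle_{x_k}.
\]

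Next, bound the right-hand side by Cauchy--Schwarz:
\[
-\langle \rgrad f(x_k), \Log_{x_k}(x^*)\rangle_{x_k} \le \|\rgrad f(x_k)\|\, d(x_k,x^*).
\]
Combining this with the hypothesis $f(x_k)-f(x^*)>\epsilon$ yields
\[
\epsilon < \|\rgrad f(x_k)\|\, d(x_k,x^*).
\]

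Finally, divide by $d(x_k,x^*)$, which is legitimate provided $d(x_k,x^*)>0$. This holds automatically: if $x_k=x^*$, the left-hand side $f(x_k)-f(x^*)$ would be $0$, contradicting the hypothesis $f(x_k)-f(x^*)>\epsilon>0$. Hence $\|\rgrad f(x_k)\| > \epsilon/d(x_k,x^*)$, which gives the claim, in fact with strict inequality.

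The argument is essentially routine. The only points needing care are that $x_k\in\mathcal{X}$, so that the convexity inequality and $\Log_{x_k}$ apply (this is guaranteed by the projection step or the feasibility of the iterates), and the degenerate case $d(x_k,x^*)=0$, which is excluded as shown above.
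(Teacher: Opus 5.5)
Your proof is correct and follows the paper's argument step for step: apply geodesic convexity at $x_k$ toward $x^*$, use Cauchy--Schwarz with $\|\Log_{x_k}(x^*)\| = d(x_k,x^*)$, then divide. Your explicit check that $d(x_k,x^*)>0$ (which the hypothesis forces) is a small piece of care that the paper leaves implicit.
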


\begin{proof}
By geodesically convexity, we get 
\begin{align*}
    f(x^*)\geq f(x_k)+\langle \rgrad f(x_k), \Log_{x_k}(x^*)\rangle.
\end{align*}
Using the Cauchy-Schwarz inequality and that $d(x_k, x^*) = \|\Log_{x_k}(x^*)\|$, we obtain
\begin{align*}
    \epsilon&<f(x_k)-f(x^*)\leq - \langle \rgrad f(x_k), \Log_{x_k}(x^*)\rangle \leq\|\rgrad f(x_k)\|\cdot d(x_k,x^*).
\end{align*}
Dividing both sides by $d(x_k, x^*)$ completes the proof.
\end{proof}

\begin{lemma}\label{lem:inner_product_nt_logx}
Let $f:\mathcal{X}\subseteq\mathcal{M}:\rightarrow\mathbb{R}$ be a geodesically convex and geodesically ${L}$-smooth function, and $x^*$ be its minimizer over $\mathcal{X}$. For any $x_k\in\mathcal{X}$ such that $f(x_k)-f(x^*)\geq\epsilon$, we have $$\left\langle \frac{\rgrad f (x_k)}{\|\rgrad f(x_k)\|}, \frac{\Log_{x_k}(x^*)}{\|\Log_{x_k}(x^*)\|} \right\rangle \leq -\frac{1}{\|\Log_{x_k}(x^*)\|}\sqrt{\frac{ \epsilon}{2{L}}}.$$
\end{lemma}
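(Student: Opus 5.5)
The plan is to reduce the claimed inequality to a bound relating the suboptimality gap to the gradient norm, and then combine the two estimates multiplicatively. Multiplying both sides of the claim by $\|\Log_{x_k}(x^*)\| = d(x_k,x^*)>0$, it suffices to show
\begin{equation*}
\langle \rgrad f(x_k), \Log_{x_k}(x^*)\rangle \le -\|\rgrad f(x_k)\|\sqrt{\frac{\epsilon}{2L}} .
\end{equation*}
The case $\rgrad f(x_k)=0$ is excluded implicitly, since the normalized gradient must be defined. Also, $x_k\neq x^*$ because $f(x_k)-f(x^*)\ge\epsilon>0$.

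The first step is geodesic convexity, exactly as in the proof of Lemma~\ref{lem:gt_lower_bound}. Taking $y=x^*$ and $x=x_k$ in \eqref{geo-convex-inequality} with $\alpha=0$ gives
\begin{equation*}
\langle \rgrad f(x_k),\Log_{x_k}(x^*)\rangle \le -\bigl(f(x_k)-f(x^*)\bigr).
\end{equation*}
So it remains to prove $f(x_k)-f(x^*) \ge \|\rgrad f(x_k)\|\sqrt{\epsilon/(2L)}$.

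The second step is the standard smoothness inequality $f(x_k)-f(x^*) \ge \|\rgrad f(x_k)\|^2/(2L)$. To get it, set $y=\mathrm{Exp}_{x_k}\!\bigl(-\rgrad f(x_k)/L\bigr)$ and apply the upper quadratic bound in Definition~\ref{as:lipgrad}. This uses $\Log_{x_k}(y)=-\rgrad f(x_k)/L$ and $d(x_k,y)=\|\rgrad f(x_k)\|/L$, which hold at least for the minimizing geodesic, and gives
\begin{equation*}
f(y)\le f(x_k)-\frac{1}{2L}\|\rgrad f(x_k)\|^2 .
\end{equation*}
Then $f(x^*)\le f(y)$ yields the bound.

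The last step combines the two facts as a geometric mean:
\begin{equation*}
f(x_k)-f(x^*) = \sqrt{(f(x_k)-f(x^*))\,(f(x_k)-f(x^*))} \ge \sqrt{\epsilon\cdot \frac{\|\rgrad f(x_k)\|^2}{2L}} = \|\rgrad f(x_k)\|\sqrt{\frac{\epsilon}{2L}}.
\end{equation*}
Dividing by $\|\rgrad f(x_k)\|\,\|\Log_{x_k}(x^*)\|$ then gives the statement.

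The main obstacle is justifying $f(x^*)\le f(y)$ in the second step, because $y$ need not lie in $\mathcal{X}$ while $x^*$ is only the minimizer over $\mathcal{X}$. I would handle this by using that $f$ is geodesically $L$-smooth on all of $\mathcal{M}$ (Definition~\ref{as:lipgrad} is stated for $x,y\in\mathcal{M}$), together with $x^*$ being a global minimizer. The global-minimizer property holds in the unconstrained case, or when $x^*$ is interior as in Theorem~\ref{thm:strong_convexity_complexity}, since then $\rgrad f(x^*)=0$ and geodesic convexity extends the minimality. If needed, I would state this as the standing interpretation of $f^*$. A secondary technical point is ensuring $\Log_{x_k}(y)$ coincides with $-\rgrad f(x_k)/L$, i.e.\ that the step stays within the injectivity radius. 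Failing that, I would apply the smoothness bound along the geodesic curve directly, integrating the Lipschitz condition on the gradient along $t\mapsto \mathrm{Exp}_{x_k}(-t\,\rgrad f(x_k)/L)$.
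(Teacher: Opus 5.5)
Your argument is correct and is essentially the paper's proof: geodesic convexity gives $\langle \rgrad f(x_k),\Log_{x_k}(x^*)\rangle\le-(f(x_k)-f(x^*))$, the smoothness bound at $\mathrm{Exp}_{x_k}(-\rgrad f(x_k)/L)$ gives $\|\rgrad f(x_k)\|\le\sqrt{2L(f(x_k)-f(x^*))}$, and your geometric-mean step is just the paper's division by $\|\rgrad f(x_k)\|$ rearranged. The obstacle you flag is genuine, and the paper passes over it by simply asserting $f(x^*)\le f(\mathrm{Exp}_{x_k}(u))$, so taking $x^*$ to be a global minimizer of $f$ on $\mathcal{M}$ as a standing assumption is the right fix (interiority of $x^*$ alone does not give this, since convexity is only assumed on $\mathcal{X}$).
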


\begin{proof}
From geodesically convexity, we have
$
f(x^*) \geq f(x_k) + \langle \rgrad f(x_k), \Log_{x_k}(x^*) \rangle_{x_k}.
$
Denote $g_k:=\rgrad f(x_k)$ and $w_k :=\frac{g_k}{\|g_k\|}$. 
This inequality can be rearranged as
\begin{align}\label{eq:convexity_bound}
\left\langle w_{k}, \Log_{x_k}(x^*)\right\rangle_{x_k} 
&\leq -\frac{f(x_k) - f(x^*)}{\|g_{k}\|} .
\end{align}
Next, from geodesically ${L}$-smoothness, for any tangent vector $u \in \mathrm{T}_{x_k}\mathcal{M}$, we have
\begin{align*}
f(\mathrm{Exp}_{x_k}(u)) \leq f(x_k) + \langle g_k, u \rangle_{x_k} + \frac{L}{2}\|u\|^2.
\end{align*}
Setting $u = -g_k/L$ and noting that $f(x^*) \leq f(\mathrm{Exp}_{x_k}(u))$, we get
$
f(x^*) \leq f(x_k) - \frac{1}{2L}\|g_k\|^2.
$
and thus 
$
\|g_{k}\| \leq \sqrt{2{L}\left(f(x_k) - f(x^*)\right)},
$
which, combining with \eqref{eq:convexity_bound} and $f(x_k) - f(x^*) \geq \epsilon$, yields
\begin{align*}
\left\langle w_{k}, \Log_{x_k}(x^*) \right\rangle_{x_k} 
\leq -\frac{f(x_k) - f(x^*)}{\sqrt{2{L}(f(x_k) - f(x^*))}} = -\sqrt{\frac{f(x_k) - f(x^*)}{2{L}}} \leq -\sqrt{\frac{ \epsilon}{2{L}}}.
\end{align*}
Finally, dividing both sides by $\|\Log_{x_k}(x^*)\|$ yields the desired result. 
\end{proof}

\begin{lemma}\label{lem:LG_smooth}
Let $f: \mathcal{X} \subseteq \mathcal{M} \to \mathbb{R}$ be a geodesically ${L}$-smooth function, and let $x^*$ be its minimizer over $\mathcal{X}$. For any $x \in \mathcal{X}$, we have 
$$\|\rgrad f(x)\|^2 \leq 2L(f(x) - f(x^*)).$$
\end{lemma}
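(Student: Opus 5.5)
The plan is to reproduce the standard descent-lemma argument in the Riemannian setting, exactly as in the proof of Lemma~\ref{lem:inner_product_nt_logx} above. Fix $x\in\mathcal{X}$ and set $g:=\rgrad f(x)$. If $g=0$ the inequality is trivial, so assume $g\neq 0$.

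First, apply the upper quadratic bound from Definition~\ref{as:lipgrad} with $y=\mathrm{Exp}_x(u)$ for a tangent vector $u\in\mathrm{T}_x\mathcal{M}$. This uses $\Log_x(\mathrm{Exp}_x(u))=u$ and $d(x,\mathrm{Exp}_x(u))\le\|u\|$. It gives
\[
f(\mathrm{Exp}_x(u))\le f(x)+\langle g,u\rangle_x+\tfrac{L}{2}\|u\|_x^2 .
\]
Second, choose $u=-g/L$, which minimizes the right-hand side, so that
\[
f(\mathrm{Exp}_x(-g/L))\le f(x)-\tfrac{1}{2L}\|g\|^2 .
\]
Third, use $f(x^*)\le f(\mathrm{Exp}_x(-g/L))$ and rearrange to obtain $\|g\|^2\le 2L(f(x)-f(x^*))$.

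The only delicate point is the third step. It requires $x^*$ to be a global minimizer of $f$ over the region where $\mathrm{Exp}_x(-g/L)$ lands. In the unconstrained case $\mathcal{X}=\mathcal{M}$ this holds automatically. In the constrained case, I would follow the paper's usage in Lemma~\ref{lem:inner_product_nt_logx} and interpret $x^*$ as a minimizer of $f$ on the domain where the smoothness inequality is applied. Equivalently, this needs $\mathrm{Exp}_x(-g/L)$ to stay in the domain of $f$, for instance when $x^*$ is an unconstrained minimizer, as in Theorem~\ref{thm:strong_convexity_complexity} where $x^*$ lies in the interior.

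There is also a minor technicality: the quadratic bound is stated in terms of $d^2(x,y)$, while we need it in terms of $\|u\|^2$. This is handled by $d(x,\mathrm{Exp}_x(u))\le\|u\|$, or by equality within the injectivity radius. After that, the remaining manipulations are routine algebra.
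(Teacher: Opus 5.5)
Your proposal is correct and follows the paper's proof essentially verbatim. The paper also plugs $u=-\rgrad f(x)/L$ into the smoothness upper bound, invokes $f(x^*)\le f(\mathrm{Exp}_x(u))$, and rearranges. Your caveat about the constrained case is well founded: the paper justifies the middle step only by calling $x^*$ ``the global minimizer,'' and the statement as written, with $x^*$ minimizing only over $\mathcal{X}$, can fail. For example, take $f(x)=x$ on $\mathcal{X}=[0,1]\subset\mathbb{R}$ at $x=0$.
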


\begin{proof}
By the definition of geodesic ${L}$-smoothness, for any tangent vector $u \in \mathrm{T}_{x}\mathcal{M}$, we have
\begin{align*}
    f(\mathrm{Exp}_{x}(u)) \leq f(x) + \langle \rgrad f(x), u \rangle_{x} + \frac{L}{2}\|u\|^2.
\end{align*}
Consider the update direction $u = -\frac{1}{L}\rgrad f(x)$. Plugging this into the inequality yields
\begin{align*}
    f(\mathrm{Exp}_{x}(u)) 
    &\leq f(x) - \frac{1}{L}\|\rgrad f(x)\|^2 + \frac{L}{2}\left\|-\frac{1}{L}\rgrad f(x)\right\|^2 \\
    &= f(x) - \frac{1}{2L}\|\rgrad f(x)\|^2.
\end{align*}
Since $x^*$ is the global minimizer, we have $f(x^*) \leq f(\mathrm{Exp}_{x}(u))$. Combining this with the upper bound derived above implies
$$f(x^*) \leq f(x) - \frac{1}{2L}\|\rgrad f(x)\|^2.$$
Rearranging the terms completes the proof.
\end{proof}

\section{Proof of Lemmas \ref{lem:grad_with_bias}, \ref{lem:estimating_normalized_gradient}, and Proposition \ref{prop:estimate_biased_NG}}
\subsection{Proof of Lemma\ref{lem:grad_with_bias}}\label{pf:grad_with_bias}
\begin{proof}
From Definition~\ref{as:lipgrad}, we have
\[
\begin{array}{rlcll}
\langle\pert u , \rgrad f(x)\rangle - \frac{L}{2}\pert^2 & \leq & f(\mathrm{Exp}_x(\pert u)) - f(x) & \leq & \langle\pert u , \rgrad f(x)\rangle + \frac{L}{2}\pert^2, \\ 
-\langle\pert u , \rgrad f(x)\rangle - \frac{L}{2}\pert^2 & \leq & f(\mathrm{Exp}_x(-\pert u))) - f(x) & \leq & -\langle\pert u , \rgrad f(x)\rangle + \frac{L}{2}\pert^2, 
\end{array}
\]
which further implies 
\begin{align*}
&2\langle\pert u , \rgrad f(x)\rangle - L\pert^2 \leq f(\mathrm{Exp}_x(\pert u))-f(\mathrm{Exp}_x(-\pert u))) \leq 2\langle\pert u , \rgrad f(x)\rangle +L\pert^2.
\end{align*}

Note that this indicates if $\abs{\langle u , \rgrad f(x)\rangle} >L\pert/2$, then
\begin{align}\label{lem:grad_with_bias-proof-sign-eq}
\sign (f(\mathrm{Exp}_x(\pert u)) - f(\mathrm{Exp}_x(-\pert u)))= \sign (\langle u , \rgrad f(x)\rangle).
\end{align}

We now establish an upper bound on $\mathbb{P}(|\langle u , \rgrad f(x)\rangle|\leq{L} \pert/2 )$. Denote $a=\frac{\rgrad f(x)}{\norm{\rgrad f(x)}}$, $b=\frac{{L}\pert}{2\norm{\rgrad f(x)}}$, and $Z = \langle u , a\rangle$. Therefore, $\mathbb{P}(|\langle u , \rgrad f(x)\rangle|\leq{L} \pert/2 ) = \mathbb{P}(|Z| \leq b)$. By the rotational symmetry of the sphere, we can assume ${a} = e_1$, which implies $Z = u_1$, the first coordinate of $u$. 

Similar to the proof of Lemma~\ref{pf:estimating_normalized_gradient}, the marginal density of $ Z \in [-1,1] $ is,
\begin{align*}
    f_Z(z) = A_d (1 - z^2)^{(d - 3)/2} \quad \text{for } z \in [-1,1], \mbox{ where } A_d = \frac{\Gamma\left(\frac{d}{2}\right)}{\sqrt{\pi}  \Gamma\left(\frac{d-1}{2}\right)}.
\end{align*}
Since $ (1 - z^2)^{(d - 3)/2} \leq 1 $ for all $ z \in [0,1] $, we have $\int_0^b f_Z(z) dz \leq A_d  b$, which further implies,
\begin{align*}
\mathbb{P}(|Z| \leq b) = \mathbb{P}(|\langle u , a\rangle| \leq b)=\int_{-b}^b f_Z(z) dz \leq 2 A_d  b \leq 2\sqrt{\frac{d}{2\pi}} b = \sqrt{\frac{d}{2\pi}}\frac{{L}\pert}{\norm{\rgrad f(x)}}, 
\end{align*}
where the second inequality is from the Gautschi’s inequality. Therefore, from \eqref{lem:grad_with_bias-proof-sign-eq} we have that  
\begin{align*}
     & \mathbb{P}(\sign (f(\mathrm{Exp}_x(\pert u)) - f(\mathrm{Exp}_x(-\pert u)))= \sign (\langle u , \rgrad f(x)\rangle)) \\
     \geq &  \mathbb{P}(|\langle u, \mathrm{grad} f(x) \rangle| > L \nu/2)
    \geq  1-\sqrt{\frac{d}{2\pi}}\frac{{L}\pert}{\norm{\rgrad f(x)}}.
\end{align*}
This completes the proof. 
\end{proof}
\subsection{Proof of Lemma \ref{lem:estimating_normalized_gradient} }\label{pf:estimating_normalized_gradient}
\begin{proof}
Note that, 
\begin{align*}
    \mathbb{E}[\operatorname{sign}(\langle v,u\rangle_x)u]=\mathbb{E}\left[\operatorname{sign}\left(\left\langle \frac{v}{\norm{v}_x},u\right\rangle_x\right)u\right].
\end{align*}
So, we can assume $\norm{v}_x=1$ without loss of generality. 
Let $P:\mathrm{T}_x\mathcal{M}\rightarrow \mathrm{T}_x\mathcal{M}$ denote the reflection operator along $v$ on $\mathrm{T}_x\mathcal{M}$, i.e., $P(\xi)= 2\langle v,\xi\rangle v-\xi$. For an arbitrary unit tangent vector $u\in\mathrm{T}_x\mathcal{M}$, denote $u':=P(u)$. We have that 
\begin{align*}
\operatorname{sign}( \langle v,u'\rangle) = \operatorname{sign} \left( 2 \|v\|^2 \langle v,u \rangle - \langle v,u \rangle \right) = \operatorname{sign}(\langle v,u\rangle).
\end{align*}
Since $u'\sim \mathrm{Unif}(\mathcal{S}_{\mathrm{T}_x\mathcal{M}}(1))$, we then have
\begin{align*}
\mathbb{E}[\operatorname{sign}(\langle v,u\rangle) u] 
&= \frac{1}{2} \mathbb{E}[\operatorname{sign}(\langle v,u\rangle) u] + \frac{1}{2} \mathbb{E}[\operatorname{sign}(\langle v,u'\rangle) u'] \\
&= \frac{1}{2} \mathbb{E}[\operatorname{sign}(\langle v,u\rangle) u] + \frac{1}{2} \mathbb{E}[\operatorname{sign}(\langle v,u\rangle)(2 (\langle v,u\rangle) v-u)] \\
&= \mathbb{E}[(\langle v,u\rangle) \operatorname{sign}(\langle v,u\rangle)] v\\
&=\mathbb{E}[|\langle v,u\rangle|]v\\
&=\pert v, 
\end{align*}
where $\pert = \mathbb{E}[|\langle v,u\rangle|]$.
The remaining thing is to derive lower and upper bounds for $\pert$.

We first derive an upper bound for $\nu$. There exists an orthogonal matrix $R\in O(d)$ with $Rv=e_d$ where $e_d$ is a vector with all zeros but the last element is 1, and set $w:=Ru$. By symmetry of uniform distribution, $u'\coloneqq R^{-1}u$, and $w$ follows $\mathrm{Unif}(\mathcal{S}_{\mathrm{T}_x\mathcal{M}}(1))$ as well. Thus we have
$\mathbb{E}[|\langle v,u\rangle|]=\mathbb{E}[|\langle Rv,R^{-1}u\rangle|]=\mathbb{E}[|\langle e_1,u'\rangle|]=\mathbb{E}[|u'_1|]$.
We observe that 
\begin{align*}
\mathbb{E}[u_1^2] = \frac{1}{d} \mathbb{E}\left[\sum_{i=1}^d u_i^2\right] = \frac{1}{d},
\end{align*}
and thus get the following upper bound for $\pert$:
\begin{equation}\label{eq:nu-upper-bound}
\nu=\mathbb{E}[|\langle v,u\rangle|]=\mathbb{E}[|u'_1|]=\mathbb{E}[|u_1|] \leq \sqrt{\mathbb{E}[u_1^2]} = \frac{1}{\sqrt{d}}.
\end{equation}

Next we derive a lower bound for $\nu$. Using $\langle v,u\rangle_x = \langle Rv,Ru\rangle_x = \langle e_d, w\rangle_x$, we have
\begin{align}\label{eq:rotation}
\nu v = \mathbb{E}[\operatorname{sign}(\langle v,u\rangle_x)u]
=\mathbb{E}[\operatorname{sign}(\langle Rv,Ru\rangle_x)R^{\top}(Ru)]
=R^{\top}\mathbb{E}[\operatorname{sign}(\langle e_d,w\rangle_x)w].
\end{align}
Observe that the term $\operatorname{sign}(\langle e_d, w\rangle_x)$ depends only on the $d$-th coordinate $w_d$. Due to the rotational symmetry of the uniform distribution of $w$, for any $k \neq d$, the expectation of the $k$-th component $\mathbb{E}[\operatorname{sign}(w_d)w_k]$ vanishes. 
So $\mathbb{E}[\operatorname{sign}(\langle e_d,w\rangle_x)w]=\mathbb{E}[\operatorname{sign}(w_d)w]=C_d e_d$ for some scalar $C_d>0$.
Therefore, from \eqref{eq:rotation} we know that $\nu v = C_d R^\top e_d = C_d v$, which implies 
\begin{align*}
\nu=C_d=\langle \mathbb{E}[\operatorname{sign}(w_d)w],e_d\rangle_x=\mathbb{E}[\operatorname{sign}(w_d)w_d]=\mathbb{E}[|w_d|].
\end{align*}
Now, the marginal density of $ w_d \in [-1,1] $ is \citep{vershynin2009high}:
\begin{align*}
    f(w_d) = A_d (1 - w_d^2)^{(d - 3)/2}  \mbox{ where } A_d = \frac{\Gamma\left(\frac{d}{2}\right)}{\sqrt{\pi}  \Gamma\left(\frac{d-1}{2}\right)},
\end{align*}
which yields 
\begin{align}\label{nu-lower-bound}
    \nu=\mathbb{E}[|w_d|]=A_d\int_{-1}^1|t|(1 - t^2)^{(d - 3)/2}dt=2A_d\int_{0}^1t(1 - t^2)^{(d - 3)/2}dt
    \geq \frac{1}{\sqrt{2\pi d}},
\end{align}
where the last inequality follows the Gautschi’s inequality.

Note that $\hat{C}$ in \eqref{lem:estimating_normalized_gradient-eq} satisfies $\hat{C} = \nu \sqrt{d}$, and therefore, by combining \eqref{eq:nu-upper-bound} and \eqref{nu-lower-bound}, we obtain $\hat{C} \in [1/\sqrt{2\pi},1]$, and this completes the proof. 
\end{proof}

\subsection{Proof of Proposition \ref{prop:estimate_biased_NG}}
\begin{proof}\label{pf:estimate_biased_NG}
For any vector $v \in \mathcal{S}_{\mathrm{T}_x\mathcal{M}}(1)$, denote 
\[A:= \operatorname{sign}(f(\mathrm{Exp}_x( \pert u)) - f(\mathrm{Exp}_x( -\pert u)))  \langle u, v\rangle, \quad B:= \operatorname{sign}(\langle\rgrad f(x), u\rangle_x)  \langle u, v\rangle. \]
From Lemma \ref{lem:grad_with_bias} we know that $\mathbb{P}(A=B)\geq 1-\gamma_x$, which together with $|A-B|\leq 2|\langle u,v\rangle|\leq 2$, yields $|\mathbb{E}_u A-\mathbb{E}_u B|\leq 2\gamma_x$. From Lemma \ref{lem:estimating_normalized_gradient}, we know 
\begin{align*}
   \mathbb{E}_u B = \mathbb{E}_u\big[\operatorname{sign}(\langle \rgrad f(x),u\rangle_x)u\big]
= \frac{\hat{C}}{\sqrt{d}}\frac{\rgrad f(x)}{\|\rgrad f(x)\|},
\end{align*}
which completes the proof.
\end{proof}

\section{Proof of Theorem \ref{thm:beta_smoothness_complexityapp}}

\subsection{Proof of Theorem \ref{thm:beta_smoothness_complexityapp} Part (i)}\label{pf:beta_smoothness_complexity} 

\begin{remark}[Expectation Notation]
Unless specified otherwise by a subscript, the operator $\mathbb{E}[\cdot]$ denotes the total expectation taken over all random variables $\mathcal{U}_k$ generated up to the current iteration. We use the subscript notation (e.g., $\mathbb{E}_{u_k}[\cdot \mid x_k]$) for conditional expectations with respect to a specific random source $u_k$ given the history $x_k$.
\end{remark}

\begin{proof}
For the ease of notation, we denote $h_k=h_\pert(x_k)$ when there is no confusion. Choosing $v=\frac{\rgrad f(x_k)}{\|\rgrad f(x_k)\|}$ in Proposition \ref{prop:estimate_biased_NG}, we get 
\begin{equation}\label{eq:hkgradlower}
\begin{aligned}
\mathbb{E}_{u_k}\bigl[\langle h_k, \rgrad f(x_k)\rangle_{x_k}\mid x_k\bigr]
&\geq
\frac{\hat{C}}{\sqrt{d}}
\|\rgrad f(x_k)\|
-2\gamma_{x_k}\|\rgrad f(x_k)\|\\
&\geq 
\frac{\hat{C}}{\sqrt{d}}
\|\rgrad f(x_k)\|
-2\sqrt{\frac{d}{2\pi}}{L}\pert.
\end{aligned}
\end{equation}
Since $\mathcal{X}=\mathcal{M}$, we have $x_{k+1} = \mathcal{P}_{\mathcal{X}}\left(\mathrm{Exp}_{x_k}(- \eta h_\pert(x_k))\right) = \mathrm{Exp}_{x_k}(- \eta h_\pert(x_k)).$ 
By Assumption~\ref{as:lipgrad}, we get 
\begin{align*}
    f(x_{k+1})\leq f(x_k)-\eta\langle h_k,\rgrad f(x_k)\rangle_{x_k}+\frac{L\eta^2}{2},
\end{align*}
which implies 
\begin{equation}\label{eq:hkgradupper}
\mathbb{E}_{u_k}\bigl[\langle h_k, \rgrad f(x_k))\rangle_{x_k}\mid x_k\bigr]\leq \frac{1}{\eta }\mathbb{E}_{u_k}\bigl[f(x_k)-f(x_{k+1})\mid x_k\bigr] + \frac{\eta{L}}{2}.
\end{equation} 
Combining \eqref{eq:hkgradlower} and \eqref{eq:hkgradupper} yields
\begin{equation}\label{eq:combine_hkgrad_bounds}
    \|\rgrad f(x_k)\| \leq \frac{\sqrt{d}}{\hat{C}}\left(\frac{1}{\eta }\mathbb{E}_{u_k}\bigl[f(x_k)-f(x_{k+1})\,\big|\,x_k\bigr] + \frac{\eta{L}}{2}+2\sqrt{\frac{d}{2\pi}}{L}\pert\right).
\end{equation}
Summing over $k=0,1, \ldots, T-1$ and taking the total expectation on both sides, we apply the law of total expectation to obtain
\begin{equation}\label{eq:beta_smooth_inequality}
\begin{aligned}
    \mathbb{E}\left[\sum_{k=0}^{T-1}\|\rgrad f(x_k)\|\right] 
    &\leq \sum_{k=0}^{T-1}\frac{\sqrt{d}}{\hat{C}}\left(\frac{1}{\eta }\mathbb{E}\left[\mathbb{E}_{u_k}\bigl[f(x_k)-f(x_{k+1})\,\big|\,x_k\bigr]\right] + \frac{\eta{L}}{2}+2\sqrt{\frac{d}{2\pi}}{L}\pert\right)\\
    &= \frac{\sqrt{d}}{\eta \hat{C} }\sum_{k=0}^{T-1}\mathbb{E}\bigl[f(x_k)-f(x_{k+1})\bigr] +\frac{T\sqrt{d}}{\hat{C}}\frac{\eta{L}}{2}+\frac{Td}{\hat{C}}2\sqrt{\frac{1}{2\pi}}{L}\pert\\
    &= \frac{\sqrt{d}}{\eta \hat{C} }\mathbb{E}\bigl[f(x_0)-f(x_{T})\bigr] + \frac{T\sqrt{d}}{\hat{C}}\frac{\eta{L}}{2}+\frac{Td}{\hat{C}}2\sqrt{\frac{1}{2\pi}}{L}\pert\\
    &\leq \frac{\sqrt{d}}{\eta \hat{C} }(f(x_0)-f^*) + \frac{T\eta{L}\sqrt{d}}{2\hat{C}}+\frac{2T{L}\pert d}{\hat{C}\sqrt{2\pi}}.
\end{aligned}
\end{equation}
Let $R$ be a random variable uniformly distributed over $\{0, \dots, T-1\}$, i.e., $\mathbb{P}(R=k) = 1/T$. The expected gradient norm at $x_R$ is given by
\[
\mathbb{E}[\|\rgrad f(x_R)\|] = \frac{1}{T} \sum_{k=0}^{T-1} \mathbb{E}[\|\rgrad f(x_k)\|].
\]
Dividing \eqref{eq:beta_smooth_inequality} by $T$ and using $\eta = \frac{1}{\sqrt{T}}$, we have
\begin{align*}
    \mathbb{E}\left[\|\rgrad f(x_R)\| \right]
    &\leq \frac{1}{T}\frac{\sqrt{d}}{\eta \hat{C} }(f(x_0)-f^*) + \frac{\sqrt{d}}{\hat{C}}\frac{\eta{L}}{2}+\frac{d}{\hat{C}}2\sqrt{\frac{1}{2\pi}}{L}\pert\\
    &{\leq}  \frac{1}{T}\left(\frac{\sqrt{d}}{\eta \hat{C} }\frac{{L} D}{2}\right) + \frac{\sqrt{d}}{\hat{C}}\frac{\eta{L}}{2}+\frac{d}{\hat{C}}2\sqrt{\frac{1}{2\pi}}{L}\pert\\
    &= \frac{1}{\sqrt{T}}\frac{\sqrt{d}}{\hat{C}}\frac{{L}D}{2} + \frac{1}{\sqrt{T}}\frac{\sqrt{d}}{\hat{C}}\frac{{L}}{2}+\frac{d}{\hat{C}}2\sqrt{\frac{1}{2\pi}}{L}\pert\\
    &= \frac{1}{2\sqrt{T}}\left(\frac{{L}\sqrt{d}}{\hat{C}} (D+1)\right)+\frac{2dL}{\hat{C}\sqrt{2\pi}}\pert.
\end{align*}
Choosing $T$ and $\pert$ as specified in Theorem \ref{thm:beta_smoothness_complexityapp} Part (i) yields the desired result.
\end{proof}

\subsection{Proof of Theorem \ref{thm:beta_smoothness_complexityapp} Part (ii)}\label{pf:beta_smoothness_complexity_cosine}

\begin{proof}
The derivation is analogous to the previous proof, with the only difference being the choice of step size. By simply substituting the cosine annealing step size $\eta_k$ for $\eta$ in \eqref{eq:hkgradlower}, \eqref{eq:hkgradupper}, and \eqref{eq:combine_hkgrad_bounds}, and summing over $k=0, \dots, T-1$, the argument proceeds identically. Following similar argument as \eqref{eq:beta_smooth_inequality}, we have
\begin{align*}
\frac{\hat{C}}{\sqrt{d}}
\sum_{k=0}^{T-1} \eta_k 
\mathbb{E}\left[ \left\| {\rgrad} f(x_k) \right\| \right]
\le
 f(x_0) - f^* + \frac{{L}}{2} \sum_{k=0}^{T-1} \eta_k^{2}
+ 2\sqrt{\frac{d}{2\pi}}{L}\pert \sum_{k=0}^{T-1} \eta_k.
\end{align*}
We define a random variable $R$ taking values in $\{0, \dots, T-1\}$ with probability $\mathbb{P}(R=k) = \frac{\eta_k}{\sum_{j=0}^{T-1} \eta_j}$. Then, the left-hand side can be rewritten as
\[
\frac{\hat{C}}{\sqrt{d}} \left(\sum_{k=0}^{T-1} \eta_k\right) \mathbb{E}[\|\rgrad f(x_R)\|].
\]
Dividing both sides by $\frac{\hat{C}}{\sqrt{d}}\sum_{k=0}^{T-1} \eta_k$ and using Lemma~\ref{lem:cosine_annealing_step sizes}, we obtain
\begin{align*}
\mathbb{E}\left[ \|\rgrad f(x_R)\| \right]
&\leq
\frac{\sqrt{d}}{\hat{C}}\Bigg[
    \frac{{L} D}{\frac{\eta_0}{2}(T+1)}
    + \frac{{L}}{2} \cdot \frac{\frac{\eta_0^2}{8}(3T+4)}{\frac{\eta_0}{2}(T+1)}
    + 2\sqrt{\frac{d}{2\pi}}{L}\pert
\Bigg]\\
&= 
\frac{\sqrt{d}}{\hat{C}}\Bigg[
    \frac{2{L} D}{\eta_{0}(T+1)}
    + \frac{{L} \eta_{0}}{8}\cdot\frac{3T+4}{T+1}
    + 2\sqrt{\frac{d}{2\pi}}{L}\pert
\Bigg]\\
&\leq 
\frac{\sqrt{d}}{\hat{C}}\Bigg[
    \frac{2{L} D}{\eta_{0}(T+1)}
    + \frac{4{L} \eta_{0}}{8}
    + 2\sqrt{\frac{d}{2\pi}}{L}\pert
\Bigg]\\
&\leq 
\frac{\sqrt{d}}{\hat{C}}\Bigg[
    \frac{2{L} D}{\sqrt{T}}
    + \frac{{L}}{2\sqrt{T}}
    + 2\sqrt{\frac{d}{2\pi}}{L}\pert
\Bigg].
\end{align*}
Choosing $T$ and $\pert$ as in \eqref{thm:beta_smoothness_complexityapp-cosine-parameter} yields the desired result.
\end{proof}

\section{Proof of Theorem \ref{thm:beta_smoothness_convex_complexityapp}}

\subsection{Proof of Theorem \ref{thm:beta_smoothness_convex_complexityapp} Part (i)}\label{pf:beta_smoothness_convex_complexity}
\begin{proof}
We prove by contradiction. Note that the algorithm returns $\hat{x}_T$, and $f(\hat{x}_T) = \min_{0\le k\le T} f(x_k)$. Suppose that $f(\hat{x}_T) \geq f(x^*) + \epsilon$. This implies $f(x_k) \geq f(x^*) + \epsilon$ for all $k \in \{0, \dots, T-1\}$. 
We have $\bar\zeta\geq\zeta(\kappa,d(x_k,x^*))$ for all $x_k\in\mathcal{X}$, as $\zeta$ is increasing in the second entry and $\mathfrak{D}$ is the diameter of $\mathcal{X}$. 
By Lemma~\ref{lem:xt_gt_upper_bound}, we get $d(x_{k},x^*)\leq \sqrt{D}+\eta k \leq \sqrt{D}+\eta T$ for $k < T$. By Lemma~\ref{lem:gt_lower_bound}, we get $\|\rgrad f(x_k)\|\geq \frac{\epsilon}{d(x_k,x^*)}$. Thus, $\nu$ defined in \eqref{thm:beta_smoothness_convex_complexityapp-constant-step-param} satisfies
\begin{align}\label{nu-upper-bound}
\pert = \frac{1}{4\sqrt{2}d} \frac{(\epsilon/L)^{3/2}}{(\sqrt{D} + \eta T)^2} \leq \frac{1}{4\sqrt{2}d} \frac{(\epsilon/L)^{3/2}}{d(x_k, x^*)^2} \leq \frac{1}{4\sqrt{2}Ld} \frac{\|\rgrad f(x_k)\|}{d(x_k, x^*)} \sqrt{\frac{\epsilon}{L}}, \forall k\in[T].
\end{align}
From Lemma~\ref{lem:riemannian_cosine_law} and Proposition~\ref{prop:estimate_biased_NG}, we have the recursive bound:
\begin{align}
\mathbb{E}_{u_k}\left[d^2(x_{k+1}, x^*)|x_k\right] 
&\leq d^2(x_k ,x^*) -\frac{1}{\sqrt{\pi}}\frac{\sqrt{\epsilon}}{\sqrt{d{L}}}\eta + {\left(\sqrt{\frac{8d}{\pi}}\frac{{L}\|\Log_{x_k}(x^*)\|}{\norm{\rgrad f(x_k)}}\pert\right)}\eta + \bar\zeta\eta^2\nonumber\\
&\leq d^2(x_k ,x^*) -\frac{1}{\sqrt{\pi}}\frac{\sqrt{\epsilon}}{\sqrt{d{L}}}\eta +\frac{1}{2\sqrt{\pi}}\frac{\sqrt{\epsilon}}{\sqrt{d{L}}}\eta + \bar\zeta\eta^2\nonumber\\
&=  d^2(x_k ,x^*) - \frac{\epsilon}{16\pi d{L}\bar{\zeta}},\label{eq:beta_smooth_g_convex}
\end{align}
where the equality is due to $\eta$ defined in \eqref{thm:beta_smoothness_convex_complexityapp-constant-step-param}. 

Taking the full expectation on both sides of \eqref{eq:beta_smooth_g_convex} and iterating the inequality recursively for $k=0, \ldots, T-1$, we derive 
\begin{equation*}\label{eq:dist_ub}
\begin{aligned}
0\leq \mathbb{E}\left[ d^2(x_{T}, x^*) \right]
\leq d^2(x_0 ,x^*) - \frac{\epsilon}{16\pi d{L}\bar{\zeta}}T < 0,
\end{aligned}
\end{equation*}
where the last inequality is due to the choice of $T$ in \eqref{thm:beta_smoothness_convex_complexityapp-constant-step-param}. This is a contradiction. 
Thus, there must exist an iteration $k$ such that $f(x_k) - f(x^*) < \epsilon$, which implies $f(\hat{x}_T) - f(x^*) < \epsilon$.
\end{proof}

\subsection{Proof of Theorem \ref{thm:beta_smoothness_convex_complexityapp} Part (ii)}\label{pf:beta_smoothness_convex_complexity_cosine}
\begin{proof}
We follow the same contradiction argument as in Part (i). Suppose that $f(x_k) \geq f(x^*) + \epsilon$ for all $k \in \{0, \dots, T-1\}$.
Using the cosine annealing step size $\eta_k$, the accumulated step length is $\sum_{j=0}^{T-1}\eta_j = \frac{\eta_0}{2}(T+1)$. For the choice of $\pert$ in \eqref{thm:beta_smoothness_convex_complexityapp-cosine-step-param}, we use the bound involving $T$ as a sufficient approximation for large $T$, ensuring \eqref{nu-upper-bound} holds.

Replacing $\eta$ with $\eta_k$ in the derivation of \eqref{eq:beta_smooth_g_convex}, we obtain:
\begin{align*}
\mathbb{E}_{u_k}\left[d^2(x_{k+1}, x^*)|x_k\right] 
&\leq d^2(x_k ,x^*) -\frac{1}{\sqrt{\pi}}\frac{\sqrt{\epsilon}}{\sqrt{d{L}}}\eta_k + \frac{1}{2\sqrt{\pi}}\frac{\sqrt{\epsilon}}{\sqrt{d{L}}}\eta_k + \bar\zeta\eta_k^2 \\
&= d^2(x_k ,x^*) - \frac{\sqrt{\epsilon}}{2\sqrt{\pi d{L}}}\eta_k + \bar\zeta\eta_k^2.
\end{align*}
Taking the full expectation and summing the inequality over $k=0, \dots, T-1$, we get
\begin{align*}
\mathbb{E}\left[ d^2(x_{T}, x^*) \right]
&\leq d^2(x_0, x^*) - \frac{\sqrt{\epsilon}}{2\sqrt{\pi d{L}}} \sum_{k=0}^{T-1} \eta_k + \bar\zeta \sum_{k=0}^{T-1} \eta_k^2.
\end{align*}
By Lemma~\ref{lem:cosine_annealing_step sizes}, we have $\sum_{k=0}^{T-1} \eta_k = \frac{\eta_0}{2}(T+1)$ and $\sum_{k=0}^{T-1} \eta_k^2 = \frac{\eta_0^2}{8}(3T+4)$.
Using the bound $\sum_{k=0}^{T-1} \eta_k^2 \leq \eta_0 \sum_{k=0}^{T-1} \eta_k$ (since $\frac{3T+4}{T+1} \le 4$), we have:
\begin{align*}
\mathbb{E}\left[ d^2(x_{T}, x^*) \right]
&\leq D - \left( \frac{\sqrt{\epsilon}}{2\sqrt{\pi d{L}}} - \bar\zeta \eta_0 \right) \sum_{k=0}^{T-1} \eta_k \\
&= D - \frac{\sqrt{\epsilon}}{4\sqrt{\pi d{L}}} \sum_{k=0}^{T-1} \eta_k \\
&= D - \frac{\sqrt{\epsilon}}{4\sqrt{\pi d{L}}} \frac{\eta_0}{2}(T+1).
\end{align*}
Substituting $\eta_0 = \frac{\sqrt{\epsilon}}{4\sqrt{\pi}\bar{\zeta}\sqrt{d{L}}}$:
\begin{align*}
\mathbb{E}\left[ d^2(x_{T}, x^*) \right]
&< D - \frac{\sqrt{\epsilon}}{8\sqrt{\pi d{L}}} \left( \frac{\sqrt{\epsilon}}{4\sqrt{\pi}\bar{\zeta}\sqrt{d{L}}} \right) T \\
&= D - \frac{\epsilon}{32\pi d{L}\bar{\zeta}}T.
\end{align*}
Given $T \geq 1 + \frac{32\pi d{L}\bar{\zeta}}{\epsilon}D$, we have $\mathbb{E}\left[ d^2(x_{T}, x^*) \right] < 0$, which is a contradiction.
\end{proof}

\section{Proof of Theorem \ref{thm:strong_convexity_complexity}}\label{pf:strong_convexity_complexity}

\begin{proof}
Note that in each iteration of Algorithm \ref{alg:alg2}, we run Algorithm \ref{alg:alg1} for $t = \lceil{1+64\pi d L\bar{\zeta}/\alpha}\rceil$ iterations. 
From Theorem~\ref{thm:beta_smoothness_convex_complexityapp} (i), with the choices of $\eta_k$ and $\nu_k$ defined in Theorem~\ref{thm:strong_convexity_complexity}, we have that for any phase $k \in \{0, \dots, K-1\}$, starting from $x^{(k)}$ with initial distance bound $D_{k} \geq\mathbb{E}[d^2(x^{(k)}, x^*)]$:
\begin{align*}
\mathbb{E}[f({x}^{(k+1)})] - f(x^*) \leq \epsilon_k = \frac{\alpha D_{k}}{4},
\end{align*}
with oracle complexity $ 2t $. Since $f$ is geodesically $\alpha$-strongly convex, we have
\begin{align*}
\mathbb{E}\left[\frac{\alpha}{2} d^2({x}^{(k+1)}, x^*)\right] \leq \mathbb{E}[f(x^{(k+1)})] - f(x^*) \leq \frac{\alpha D_{k}}{4},
\end{align*}
which implies 
\begin{align*}
\mathbb{E}[d^2({x}^{(k+1)}, x^*)] \leq \frac{D_{k}}{2} = D_{k+1}.
\end{align*}
This justifies the update rule in the algorithm. 
After $K$ phases, the expected distance squared of the final output $x^{(K)}$ is bounded by:
\begin{align*}
    \mathbb{E}[d^2(x^{(K)}, x^*)] \leq \frac{D}{2^K}.
\end{align*}

Note that Assumption \ref{as:lipgrad} and the fact that $\mathcal{X}$ is a bounded set imply that $f$ is geodesically Lipschitz continuous with some Lipschitz constant $l$. This further implies
\begin{align*}
    \mathbb{E}[f(x^{(K)}) - f(x^*)] 
    \leq l \cdot \mathbb{E}[d(x^{(K)} , x^*)] \leq l \sqrt{\mathbb{E}[d^2(x^{(K)} , x^*)]} \leq l \sqrt{\frac{D}{2^K}} \leq \epsilon,
\end{align*}
where the second inequality is Jensen's inequality, and the last inequality is due to the definition of $K$.
\end{proof}

\begin{remark}
All results developed in Section \ref{sec:section3} use the search direction $h_{\pert}$ defined in (\ref{eq:graddirestimator}), which is based on a single pair of samples. It is worth noting that all results in Section \ref{sec:section3} also hold for the batch averaged estimator $\bar{h}$ defined in (\ref{eq:batchgrad}). This is because our analysis relies solely on the expectation of $h_{\pert}$, which is identical to the expectation of $\bar{h}$. For more details, please refer to (\ref{eq:same_expectation}).
\end{remark}

\section{Proof of Theorem \ref{thm:RFW_batch}}\label{pf:RFW_batch}
\begin{proof}
We denote $\mathfrak{M}_f:={L}\mathfrak{D}^2$, $\tilde{C}=\sqrt{d}/\hat{C}$, normalized Riemannian gradient $w_k:= \frac{\operatorname{grad}f(x_k)}{\norm{\operatorname{grad}f(x_k)}}$, the function value gap by $\Delta_{k}:=f(x_{k}) - f(x^*)$ for $k=0,1,\ldots,T$. Let $\mathcal{B}_k := \{u^{(k)}_{1}, \dots, u^{(k)}_{M_k}\}$ denote the batch of i.i.d. random directions sampled at iteration $k$.

By geodesically $L$-smoothness, we have
\begin{align*}
\begin{aligned}
& \Delta_{k+1} = f(x_{k+1}) - f(x^*) \\
\leq & f(x_k) - f(x^*) 
   + s_k \norm{\operatorname{grad}f(x_k)}\langle w_k,  \Log_{x_k}(z_k) \rangle 
   + \tfrac{1}{2}s_k^2 \mathfrak{M}_f \\
   = & \Delta_{k}
   + s_k \norm{\operatorname{grad}f(x_k)}\langle \tilde{C}\bar h_k,  \Log_{x_k}(z_k) \rangle +s_k \norm{\operatorname{grad}f(x_k)}\langle w_k-\tilde{C}\bar h_k,  \Log_{x_k}(z_k) \rangle 
   + \tfrac{1}{2}s_k^2 \mathfrak{M}_f \\
   \leq & \Delta_{k}
   + s_k \norm{\operatorname{grad}f(x_k)}{\langle\tilde{C} \bar h_k,  \Log_{x_k}(x^*)\rangle} +s_k \norm{\operatorname{grad}f(x_k)}\langle w_k-\tilde{C}\bar h_k,  \Log_{x_k}(z_k) \rangle +
   \tfrac{1}{2}s_k^2 \mathfrak{M}_f \\
   = & \Delta_{k} 
   + s_k \norm{\operatorname{grad}f(x_k)}\langle w_k,  \Log_{x_k}(x^*) \rangle+ \tfrac{1}{2}s_k^2 \mathfrak{M}_f  +s_k \norm{\operatorname{grad}f(x_k)}\langle w_k-\tilde{C}\bar h_k,  \Log_{x_k}(z_k)-\Log_{x_k}(x^*)\rangle,
\end{aligned}
\end{align*}
where the second inequality is due to the definition of $z_k$ in Algorithm \ref{alg:RFW_batch}.
Taking expectation on both sides, we get
\begin{align}
&\mathbb{E}_{\mathcal{B}_k}\left[\Delta_{k+1}\mid x_k\right] \nonumber\\
\leq & \Delta_{k}
   + s_k \langle \mathrm{grad}f(x_k),  \Log_{x_k}(x^*) \rangle+ \tfrac{1}{2}s_k^2 \mathfrak{M}_f +s_k\norm{\operatorname{grad}f(x_k)}\mathbb{E}_{\mathcal{B}_k}[\langle w_k-\tilde{C}\bar h_k,\Log_{x_k}(z_k)-\Log_{x_k}(x^*)\rangle \mid x_k]\nonumber \\
   \leq & \Delta_{k}
   - s_k\Delta_k + \tfrac{1}{2}s_k^2 \mathfrak{M}_f +2\mathfrak{D}s_k \norm{\operatorname{grad}f(x_k)} \cdot\mathbb{E}_{\mathcal{B}_k}[\|w_k-\tilde{C}\bar h_k\|\mid x_k],\label{eq:batch_FW_eq}
\end{align}
where the last inequality used the geodesic convexity property \eqref{geo-convex-inequality}. 
Next we derive an upper bound for $\mathbb{E}_{\mathcal{B}_k}[\|{w_k-\tilde{C}\bar h_k}\|\mid x_k]$. We first show that the expectation of the batch-average estimator $\bar h_k$ is the same as the expectation of the single sample estimator $h_k:=h_{\pert_k}(x_k)$ defined in \eqref{eq:graddirestimator}. Denoting $\bar{m}_k:=\mathbb{E}_{\mathcal{B}_k}[\bar{h}_k\mid x_k ]$ and $m_k:=\mathbb{E}_{u_k}[h_k\mid x_k ]$, we have
\begin{equation}\label{eq:same_expectation}
\bar{m}_k=\mathbb{E}_{\mathcal{B}_k}[\bar{h}_k\mid x_k ]= \frac{1}{M_k} \sum_{j=1}^{M_k} \mathbb{E}_{u^{(k)}_j}[o^{(k)}_j u^{(k)}_j \mid x_k]= \mathbb{E}_{u_k}[o_k u_k\mid x_k ]= \mathbb{E}_{u_k}[h_k \mid x_k] = m_k.
\end{equation}
Next we bound the variance. By independence of $\{u_j\}_j$ and $\mathbb{E}_{u^{(k)}_j}[o^{(k)}_ju^{(k)}_j|x_k]-m_k=0$, we have 
\begin{align*}
    &\mathbb{E}_{\mathcal{B}_k}{[\|\bar{h}_k-\bar{m}_k\|_2^2\mid x_k]}=\frac{1}{M_k^2}\sum_{j=1}^{M_k}\mathbb{E}_{u^{(k)}_j}{[\|o^{(k)}_ju^{(k)}_j-m_k\|_2^2\mid x_k]}\\
    =&\frac{1}{M_k^2}\sum_{j=1}^{M_k}(\mathbb{E}_{u^{(k)}_j}{[\|o^{(k)}_ju^{(k)}_j\|_2^2\mid x_k]}-\|m_k\|_2^2)= \frac{1}{M_k}(1-\norm{m_k}^2).
\end{align*}
By Jensen’s inequality, we get
\begin{align}\label{thm8.4-bound-variance}
\mathbb{E}_{\mathcal{B}_k}\left[\|\bar h_k-\bar m_k\|\mid x_k\right]
\le \sqrt{\mathbb{E}_{\mathcal{B}_k}\left[\|\bar h_k-\bar m_k\|^2 \mid x_k\right]}
= \sqrt{{(1-\|m_k\|^2)}/{M_k}}
\le {1}/{\sqrt{M_k}}.
\end{align}
As a consequence of the above inequalities, we get that
\begin{equation}
\label{eq:batch_mean_plus_var}
\begin{aligned}
\mathbb{E}_{\mathcal{B}_k}[\| w_k-\tilde{C}\bar h_k \|\mid x_k]
&=\mathbb{E}_{\mathcal{B}_k}[\|(w_k-\tilde{C}\bar m_k)
     +\tilde{C}(\bar m_k-\bar h_k)\|\mid x_k]\\
&\le \| w_k-\tilde{C}\bar m_k\|
   +\tilde{C}\mathbb{E}_{\mathcal{B}_k}[\|\bar h_k-\bar m_k\|\mid x_k] \\
&= \tilde{C}\|w_k/\tilde{C}-\bar m_k\|
   +\tilde{C}\mathbb{E}_{\mathcal{B}_k}[\|\bar h_k-\bar m_k\|\mid x_k]\\
&\le 2\tilde{C}\gamma_{x_k}+\tilde{C}/\sqrt{M_k},
\end{aligned}
\end{equation}
where the last inequality is from \eqref{thm8.4-bound-variance} and Proposition \ref{prop:estimate_biased_NG} by setting $v = \frac{\bar{m}_k-w_k/\tilde{C}}{\|\bar{m}_k-w_k/\tilde{C}\|}$.

Combining \eqref{eq:batch_FW_eq} and \eqref{eq:batch_mean_plus_var}, we get
\begin{align*}
&\mathbb{E}_{\mathcal{B}_k}\left[\Delta_{k+1}\bigl|x_k\right]\\
 \leq & \Delta_{k} 
   - s_k \Delta_k + \tfrac{1}{2}s_k^2 \mathfrak{M}_f +2\mathfrak{D}s_k \norm{\operatorname{grad}f(x_k)} (2\tilde{C} {\gamma_{x_k}}+\tilde{C}/\sqrt{M_k})\\
 = & \Delta_{k} 
   - s_k \Delta_k+ \tfrac{1}{2}s_k^2 \mathfrak{M}_f +2\mathfrak{D}s_k \norm{\operatorname{grad}f(x_k)} \left(\tilde{C}\sqrt{\frac{2d}{\pi}}\frac{{L}\pert_k}{\|\rgrad f(x_k)\|}+\frac{\tilde{C}}{\sqrt{M_k}}\right)\\
 \leq & \Delta_{k} 
   - s_k \Delta_k+ \tfrac{1}{2}s_k^2 \mathfrak{M}_f +2\mathfrak{D}s_k\left(\tilde{C}\sqrt{\frac{2d}{\pi}}{L}\pert_k+ \frac{\sqrt{2{L}\Delta_k}\tilde{C}}{\sqrt{M_k}}\right)\\
 \leq & \Delta_{k} 
   - s_k \Delta_k+ \tfrac{1}{2}s_k^2 \mathfrak{M}_f +2\mathfrak{D}s_k\tilde{C}\sqrt{\frac{2d}{\pi}}{L}\pert_k+ \left(\frac{\Delta_k}{4} + \frac{8{L}\mathfrak{D}^2 d}{\hat{C}^2M_k}\right) s_k \\
   = &\left(1-\frac{3s_k}{4}\right)\Delta_k+ \frac{s_k^2}{2} (\mathfrak{M}_f+3),
\end{align*}
where the second inequality is from Lemma \ref{lem:LG_smooth}, the third inequality used Young's inequality, and the last equality is obtained by substituting the choices of $s_k$ in Algorithm \ref{alg:RFW_batch} and $\nu_k$ and $M_k$ in Theorem \ref{thm:RFW_batch}. 
Denote $a_k := 1-\frac{3}{4}s_k = 1-\frac{3/2}{k+3}$. Taking the total expectation on both sides of the above inequality and applying it recursively, we obtain 
\begin{equation}\label{eq:RFW_inequality}
\mathbb{E}\left[\Delta_{k}\right] \leq 
{\left(\prod_{t=0}^{k-1}a_t\right) \Delta_0}
+ {\sum_{j=0}^{k-1} \frac{2(\mathfrak{M}_f+3)}{(j+3)^2}\prod_{t=j+1}^{k-1} a_t}.
\end{equation}
We now develop upper bounds for the two terms on the right hand side of \eqref{eq:RFW_inequality}. 
Note that 
\[
a_k = 1-\frac{3}{2}\cdot\frac{1}{k+3}\leq\left(1-\frac{1}{k+3}\right)^{3/2}=\left(\frac{k+2}{k+3}\right)^{3/2}.
\]
Therefore, 
\begin{equation*}
\prod_{t=0}^{k-1}a_t\leq\left(\prod_{t=0}^{k-1}\frac{t+2}{t+3}\right)^{3/2}=\left(\frac{2}{k+2}\right)^{3/2}\leq \frac{2}{k+2}, \mbox{ and } \prod_{t=j+1}^{k-1} a_t \leq  \left( \prod_{t=j+1}^{k-1}\frac{t+2}{t+3} \right)^{3/2}=  \left( \frac{j+3}{k+2}\right)^{3/2},
\end{equation*}
which further implies 
\begin{equation*}
\sum_{j=0}^{k-1} \frac{2(\mathfrak{M}_f+3)}{(j+3)^2}\prod_{t=j+1}^{k-1} a_t\leq 
\sum_{j=0}^{k-1} 
   \frac{2(\mathfrak{M}_f+3)}{(j+3)^2} 
   \left(\frac{j+3}{k+2}\right)^{\tfrac{3}{2}}= \frac{2(\mathfrak{M}_f+3)}{(k+2)^{3/2}} 
   \sum_{j=0}^{k-1} \frac{1}{\sqrt{j+3}} \leq \frac{4(\mathfrak{M}_f+3)}{k+2},
\end{equation*}
where the last inequality is from that $\sum_{m=3}^{k+2} \frac{1}{\sqrt{m}}
\leq \int_{2}^{k+2} x^{-1/2} dx \leq 2\sqrt{k+2}$.

Substituting these upper bounds to \eqref{eq:RFW_inequality}, we obtain
\begin{equation*}
\begin{aligned}
\mathbb{E}[\Delta_{k}]
\leq
\frac{2\Delta_{0} + 4\mathfrak{M}_f+12}{k+2}.  
\end{aligned}
\end{equation*}
So it takes $T=\lceil \frac{2\Delta_{0} + 4\mathfrak{M}_f+12}{\epsilon}\rceil$ to achieve
$\mathbb{E}[\Delta_{T}]\leq\epsilon$.
\end{proof}

\section{Sensitivity of Frank-Wolfe Search Direction}\label{sec:fw_sensitivity_example}
A key difference between projected gradient methods and Frank--Wolfe type methods is how they respond to noise in the gradient direction estimate. This effect is visible even in Euclidean case. In projected normalized gradient descent, the update takes the following form with noisy gradient direction $\tilde g_t$:
\(
     x_{t+1} = \mathcal{P}_{\mathcal{X}}(x_t - \eta \tilde g_t),
\)
Due to non-expansiveness (Assumption~\ref{as:proj}) of the projection operator, the distance between $x_{t+1}$, and the hypothetical update that one would have gotten with the true normalized gradient $g_t$, is upper bounded by $\eta\|\tilde g_t - g_t\|)$.

In contrast, Frank--Wolfe computes a search point via a linear minimization oracle
\[
     s(\tilde g_t) \in \arg\min_{s\in\mathcal{X}} \langle \tilde g_t, s-x\rangle,
\]
where $\mathcal{X}$ is a compact convex set. 
For such problems, the solution $s(\tilde g_t)$ lies on the boundary of $\mathcal{X}$. As a result, even an arbitrarily small perturbation of the direction $\tilde g_t$ can cause the oracle to select a completely different boundary point. In other words, even a small difference between $g_t$ and $\tilde g_t$, can lead to completely different search directions $s(g_t)$ and $s(\tilde g_t)$ and $\|s(g_t)-s(\tilde g_t)\|$ can be in the order of diameter $\mathcal{O}(\mathfrak{D})$ (see Figure~\ref{fig:fw_noise_diameter}). This means the variance of $s(\tilde g_t)$ is of constant order. Therefore, to guarantee descent and convergence for Frank--Wolfe with noisy or comparison-based oracles, we must explicitly control the variance of the direction estimator, whereas projected gradient methods are much more stable under the same level of noise. 

\begin{figure}[htbp]
    \centering
    \begin{subfigure}[b]{0.48\textwidth}
        \centering
        \includegraphics[width=\linewidth]{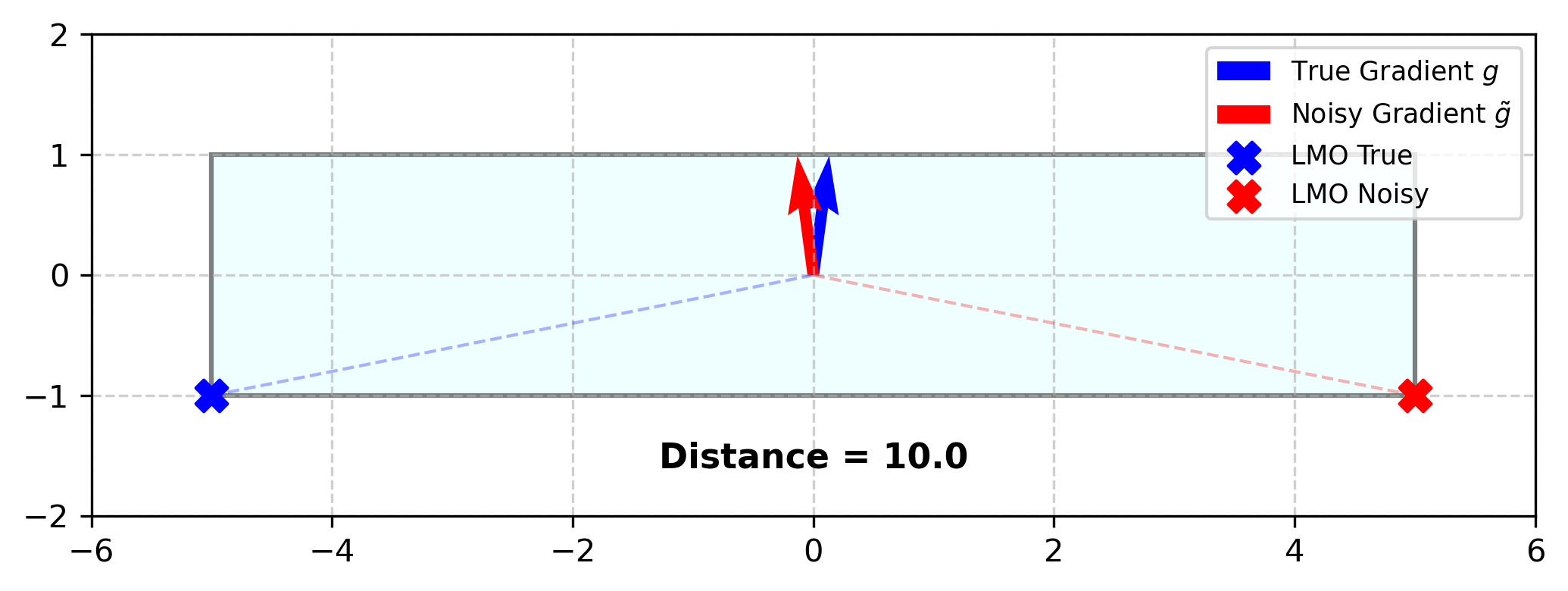} 
        \caption{Large diameter: Significant deviation between LMO solutions.}
        \label{fig:fw_large}
    \end{subfigure}
    \hfill
    \begin{subfigure}[b]{0.48\textwidth}
        \centering
        \includegraphics[width=\linewidth]{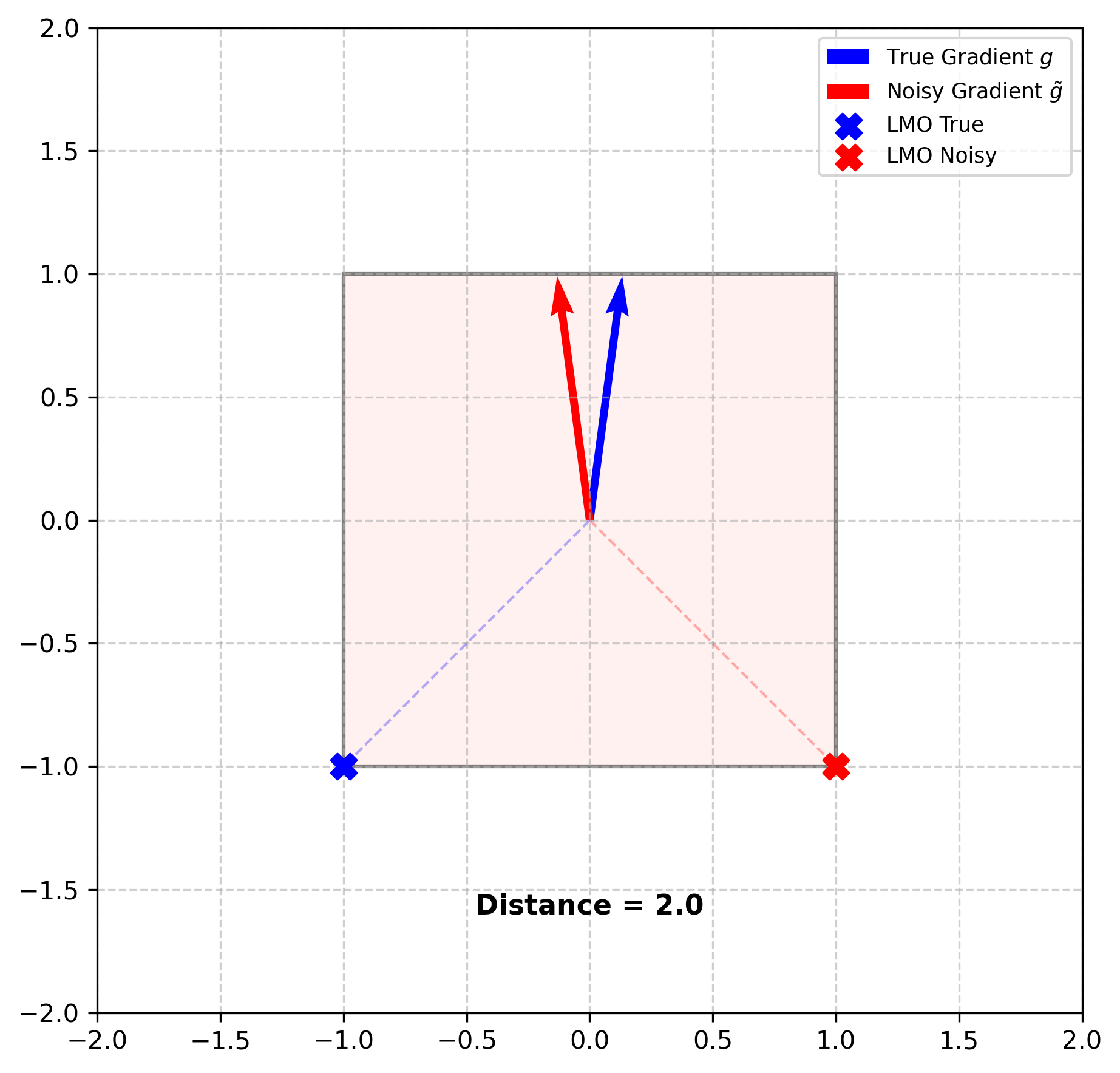
        }
        \caption{Small diameter: Reduced deviation with constant noise.}
        \label{fig:fw_small}
    \end{subfigure}
    
    \caption{Impact of the constraint set diameter on the sensitivity of the Frank-Wolfe Linear Minimization Oracle (LMO) to gradient noise. 
    (a) A larger diameter leads to significant deviation between the true and noisy LMO solutions. 
    (b) A smaller diameter mitigates this deviation, even when the noise magnitude remains constant. 
    Overall, this demonstrates how gradient noise error is amplified by the diameter of the constraint set.}
    \label{fig:fw_noise_diameter}
\end{figure}
\clearpage

\section{Additional Numerical Experiments}\label{sec:additiona}
\subsection{Additional Synthetic problem results}
We provide the CPU time results for the Rayleigh quotient problem in Figure~\ref{fig:rayleigh_cpu} and the Karcher Mean problem in Figure~\ref{fig:Karcher_mean_unconstrained_cpu}. In these low-dimensional synthetic settings, it can be observed that RDNGD converges more slowly than ZO-RGD. This occurs because RDNGD relies solely on a comparison oracle and lacks access to gradient magnitude information. Consequently, with an unit length search direction, RDNGD requires conservative step sizes to avoid oscillation around the minimum, whereas ZO-RGD benefits from gradient magnitude estimation for more efficient updates. This limitation is inherent to optimization problems utilizing only comparison oracles. However, for the high-dimensional real-world applications shown in Figure~\ref{fig:cifar_attack_example} and Figures~\ref{fig:cifar_ex1}--\ref{fig:cifar_ex3}, we observe a clear advantage in CPU time for RDNGD, demonstrating the efficiency of our method.

\begin{figure}[htbp]
    \centering
    \begin{subfigure}[b]{0.49\linewidth}
        \centering
        \includegraphics[width=\linewidth]{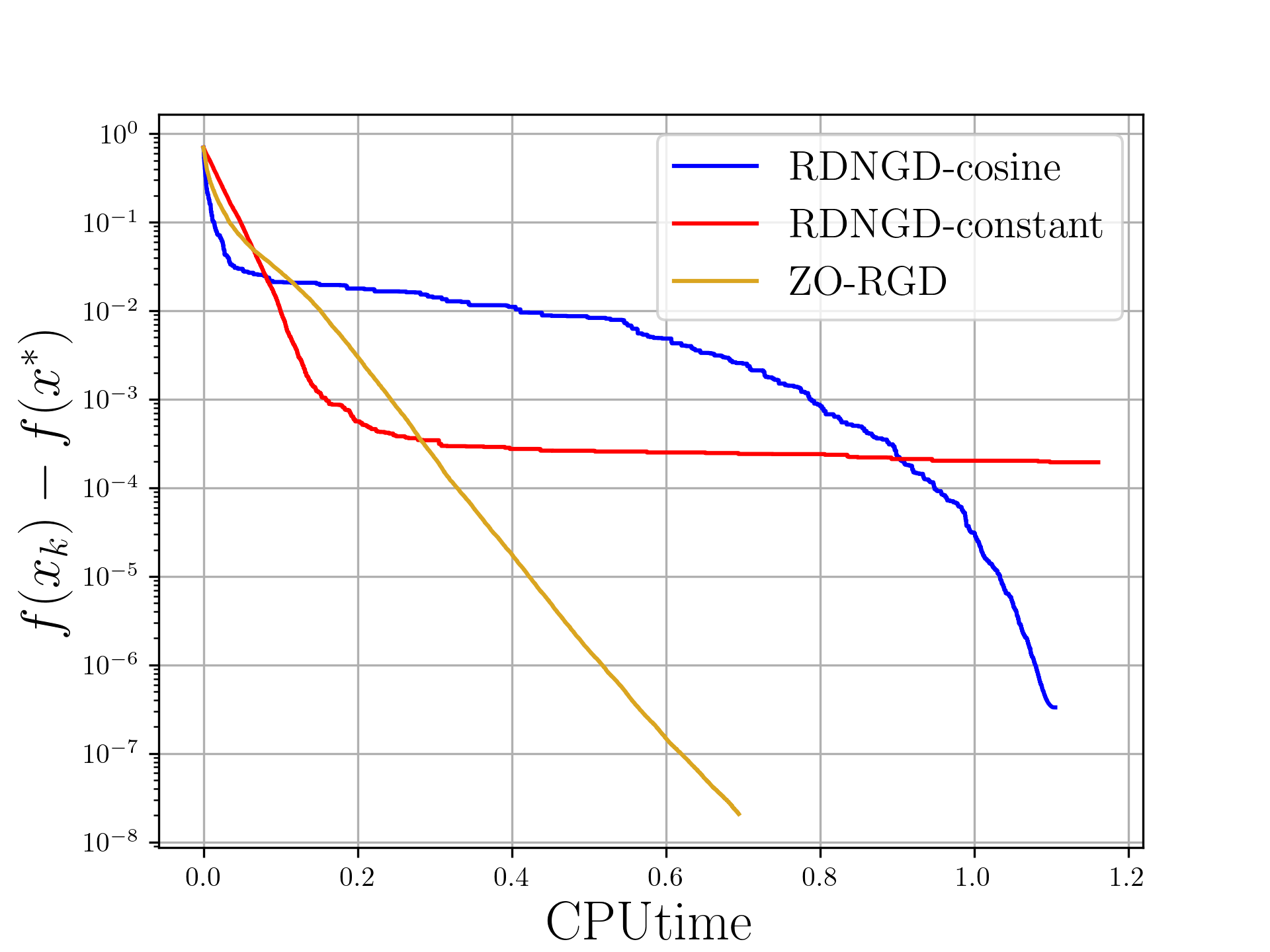}
        \caption{d=100}
        \label{fig:rayleigh_d100}
    \end{subfigure}
    \hfill
    \begin{subfigure}[b]{0.49\linewidth}
        \centering
        \includegraphics[width=\linewidth]{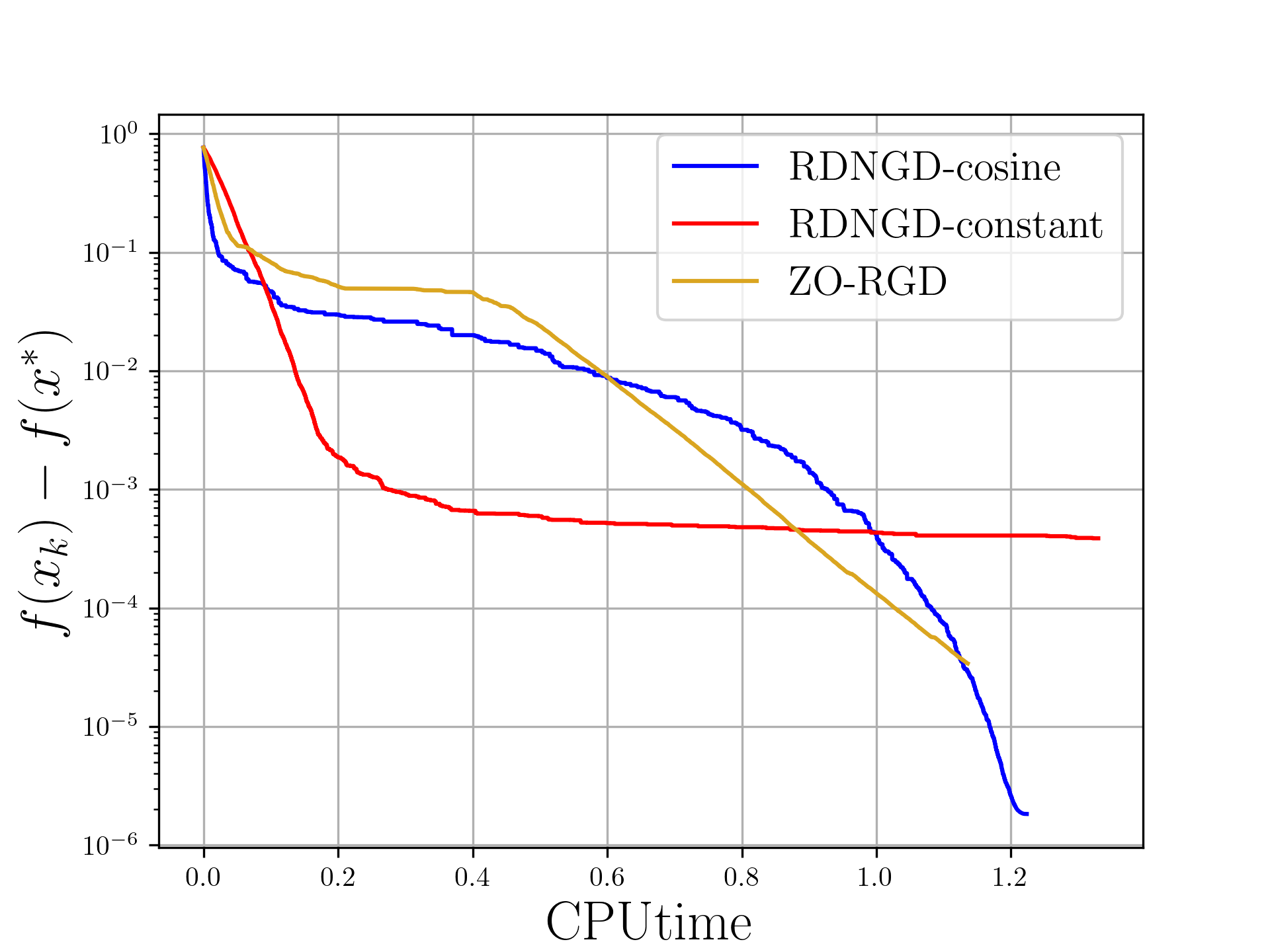}
        \caption{d=150}
        \label{fig:rayleigh_d150}
    \end{subfigure}
    
    \caption{CPUtime results for Rayleigh quotient maximization.}
    \label{fig:rayleigh_cpu}
\end{figure}

\begin{figure}[htbp]
    \centering
    \begin{subfigure}[b]{0.49\linewidth}
        \centering
        \includegraphics[width=\linewidth]{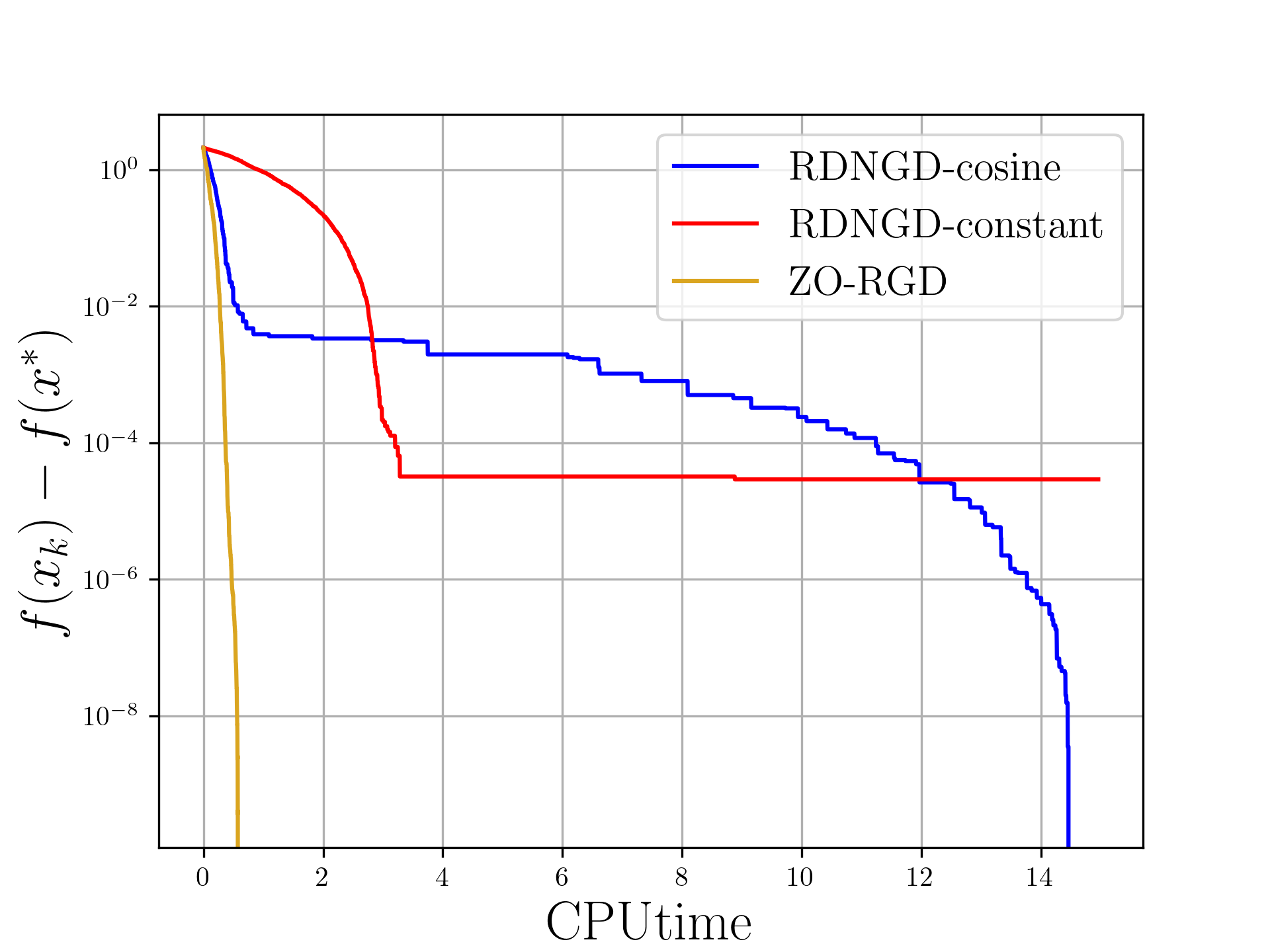}
        \caption{n=5}
        \label{fig:karcher_n5}
    \end{subfigure}
    \hfill
    \begin{subfigure}[b]{0.49\linewidth}
        \centering
        \includegraphics[width=\linewidth]{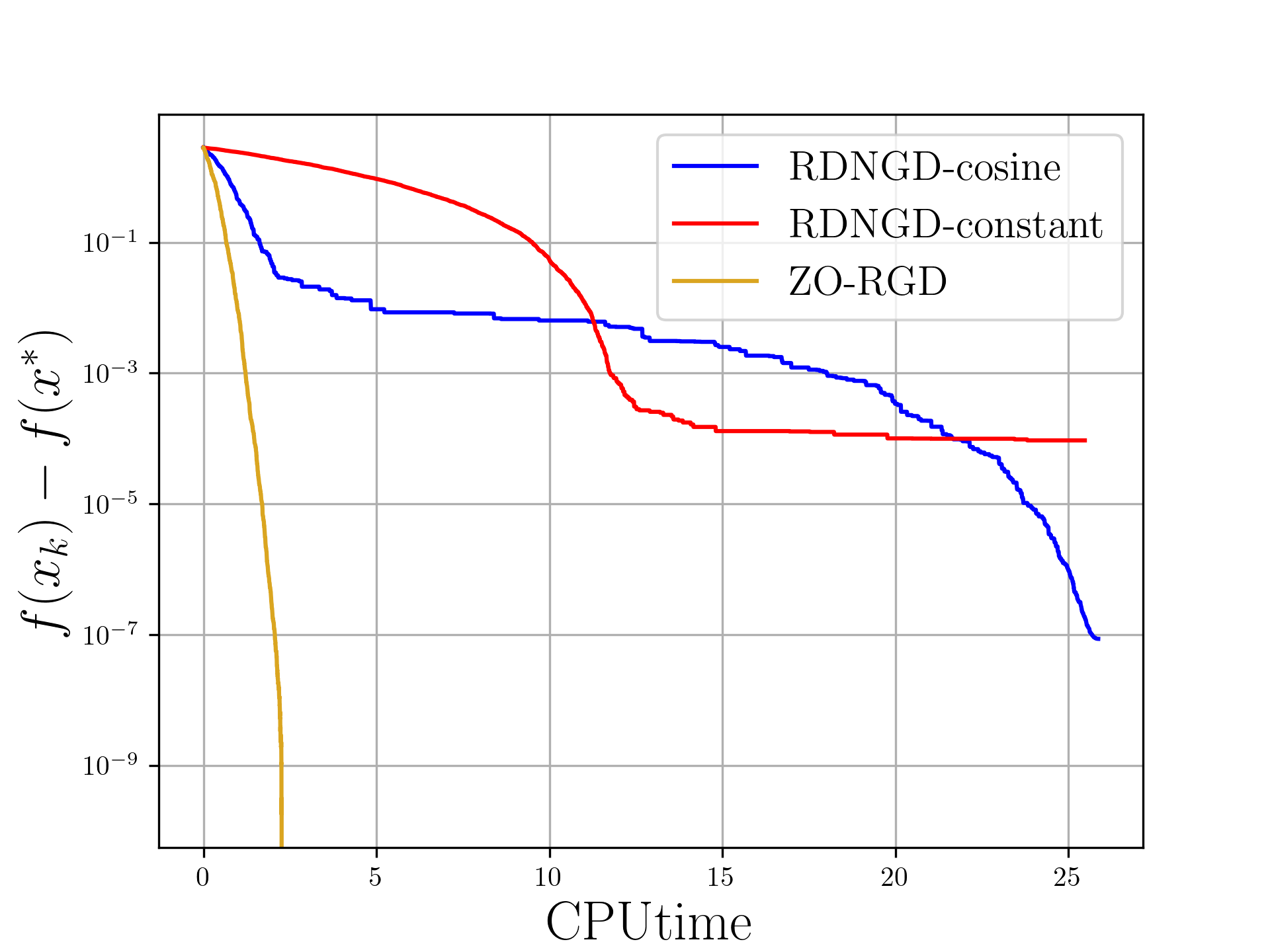}
        \caption{n=10}
        \label{fig:karcher_n10}
    \end{subfigure}
    
    \caption{CPUtime result for Karcher mean problem.}
    \label{fig:Karcher_mean_unconstrained_cpu}
\end{figure}

\subsection{Additional Real Application Examples}
In this section, we provide additional numerical results to supplement the main paper. We present more examples of the DNN attack applications in Figures~\ref{fig:cifar_ex1}--\ref{fig:cifar_ex3}, and additional results for the Horizon leveling task in Figures~\ref{fig:horizon_ex1}--\ref{fig:horizon_ex3}.

It is important to note that the experimental settings, model architectures, and hyperparameters used in these additional experiments are identical to those described in the main paper.

\begin{figure*}[htbp]
    \centering
    \setlength{\tabcolsep}{1pt}
    \renewcommand{\arraystretch}{0.5}

    \begin{subfigure}[b]{\linewidth}
        \centering
        \begin{tabular}{ccccc}
            \includegraphics[width=0.15\linewidth]{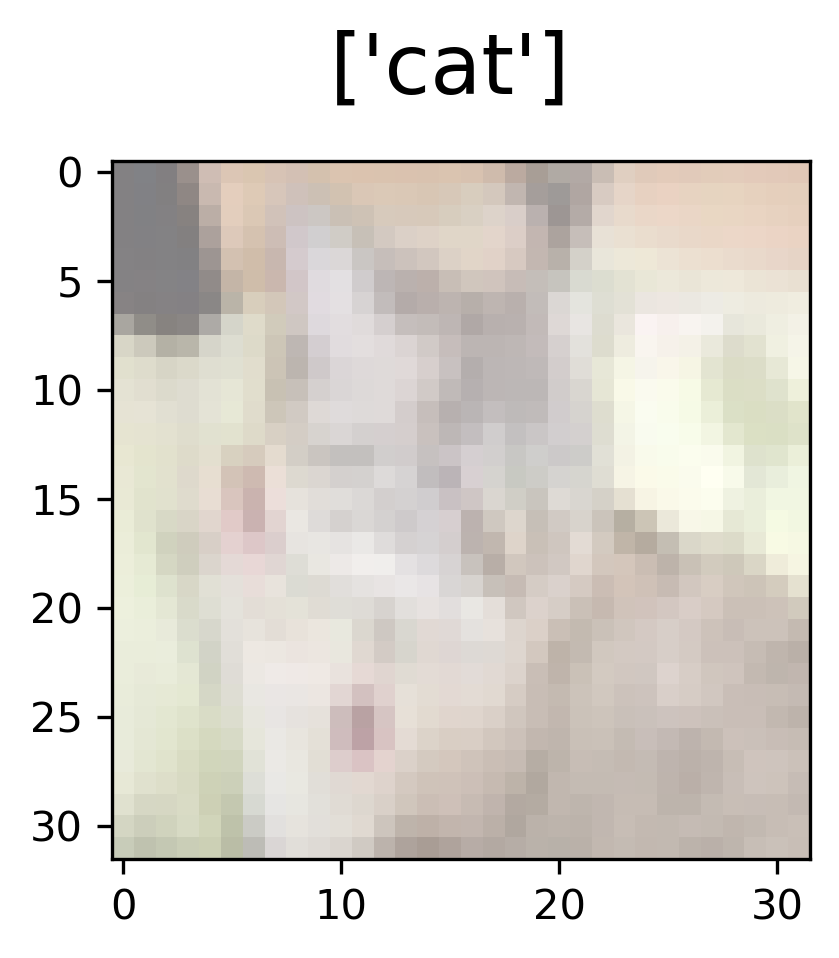} &
            \includegraphics[width=0.15\linewidth]{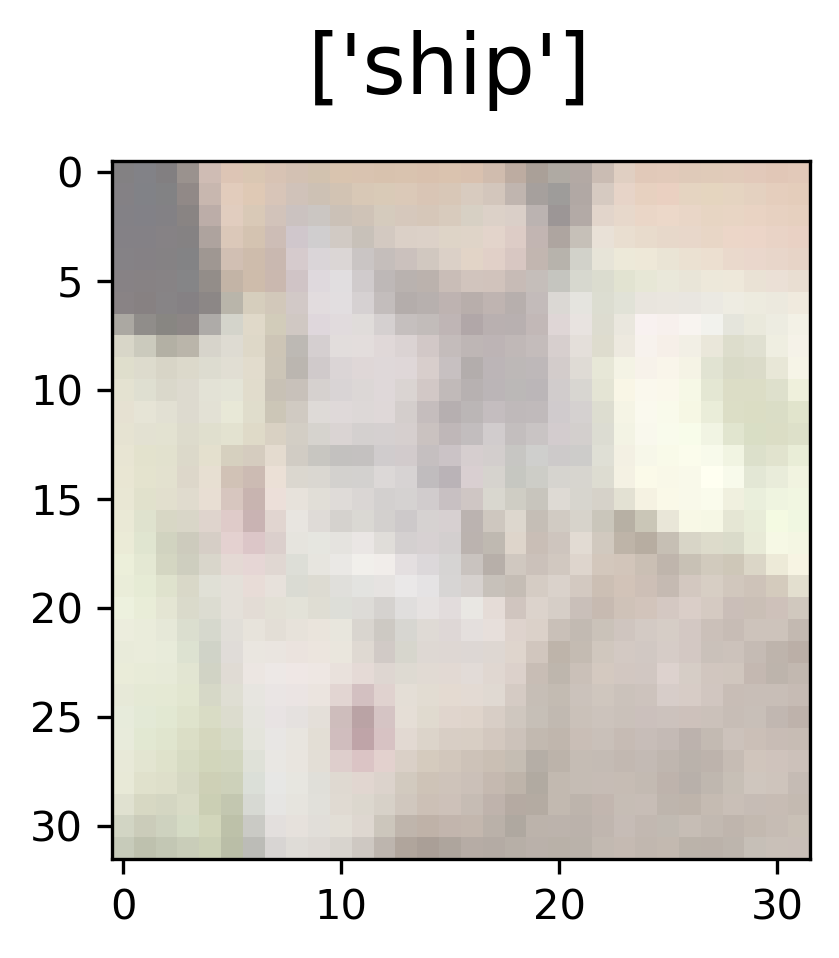} &
            \includegraphics[width=0.15\linewidth]{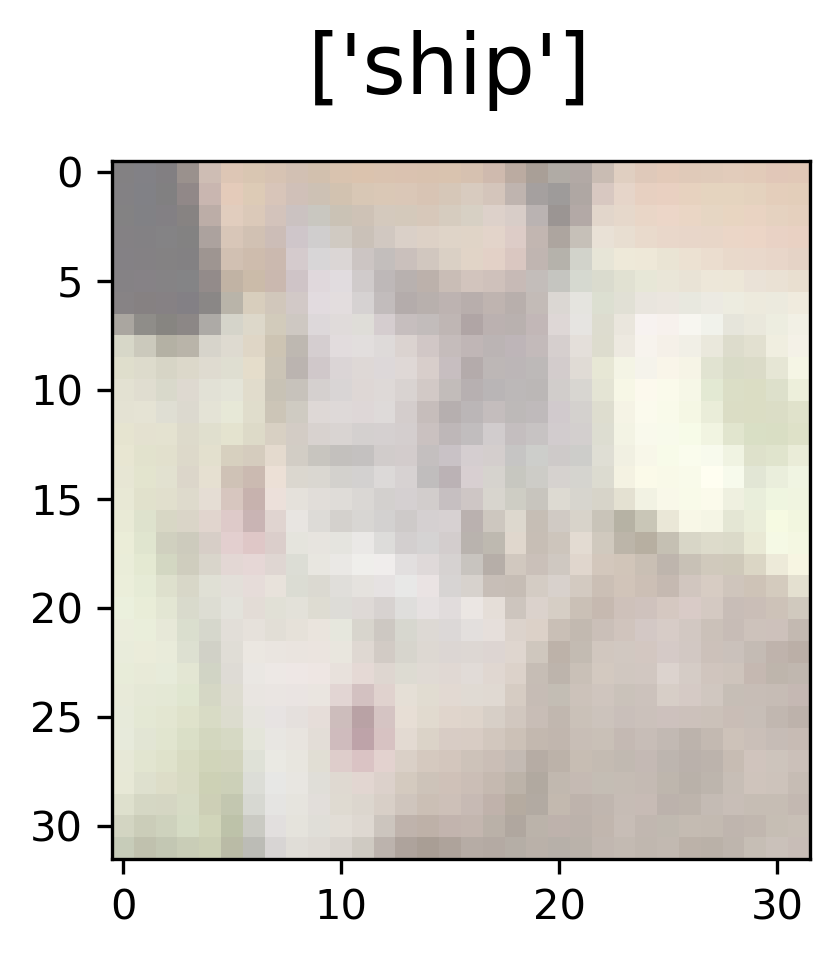} &
            \includegraphics[width=0.15\linewidth]{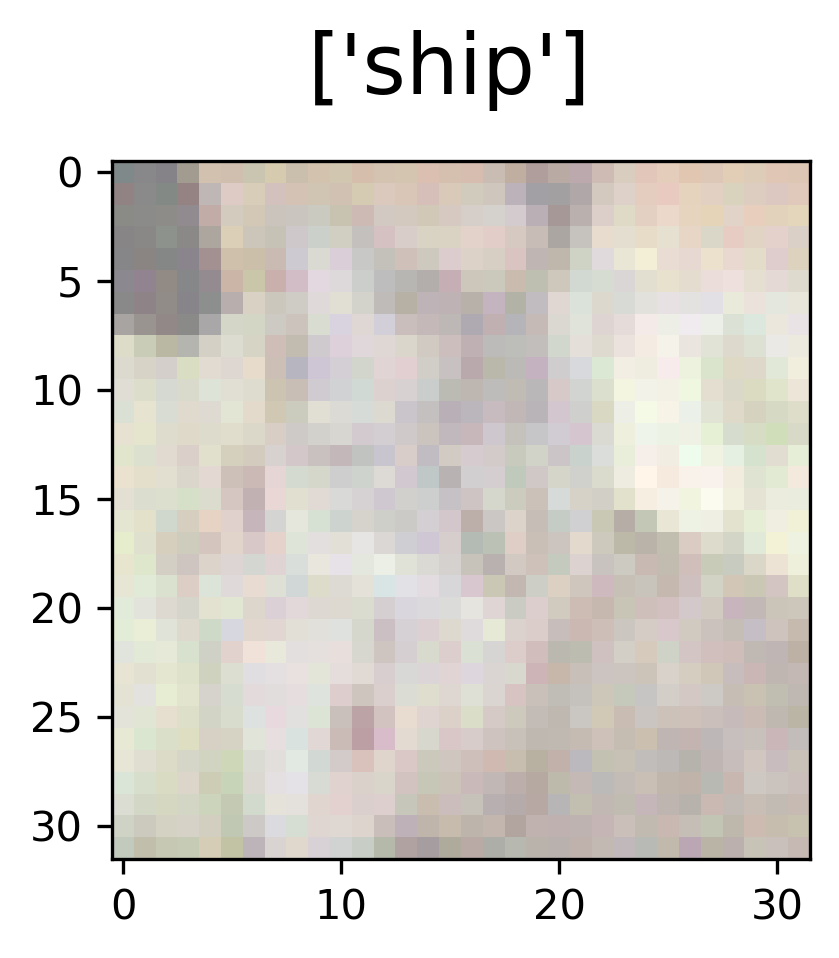} &
            \includegraphics[width=0.15\linewidth]{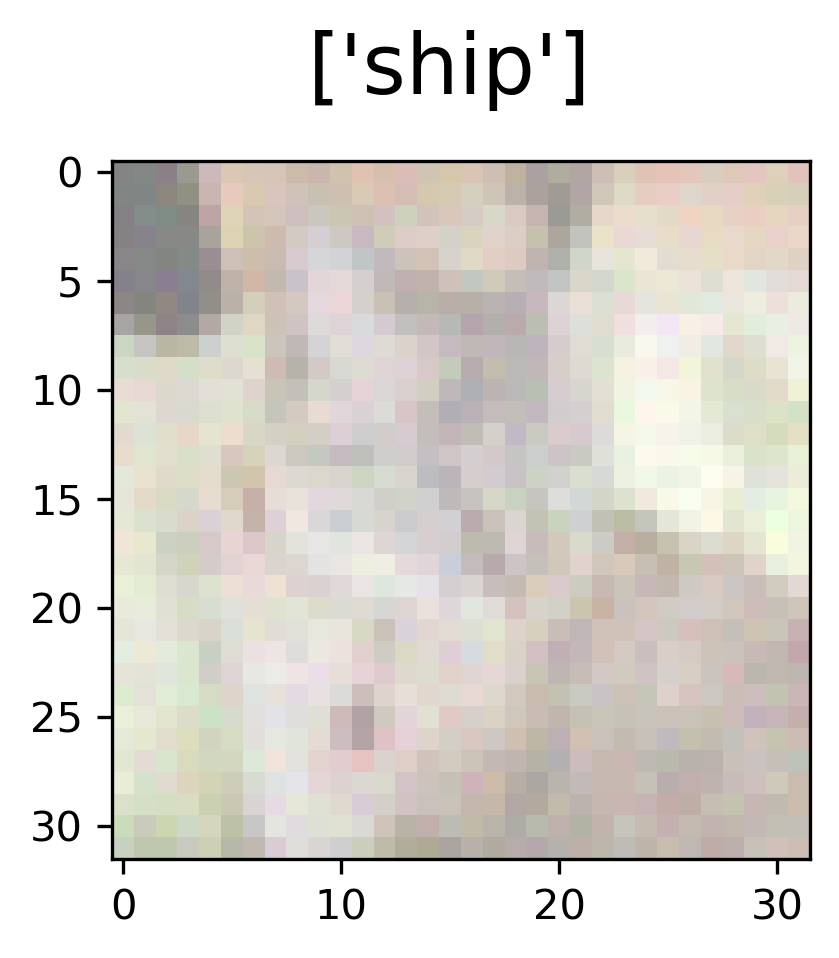} \\
            \tiny Original & \tiny PGD & \tiny RGD & \tiny ZO-RGD & \tiny RDNGD \\
        \end{tabular}
        \caption{Generated adversarial images}
        \label{fig:cifar_1_visual}
    \end{subfigure}
    
    \vspace{10pt} 

    \centering 
    
    \begin{subfigure}[b]{0.45\linewidth}
        \centering
        \includegraphics[width=\linewidth]{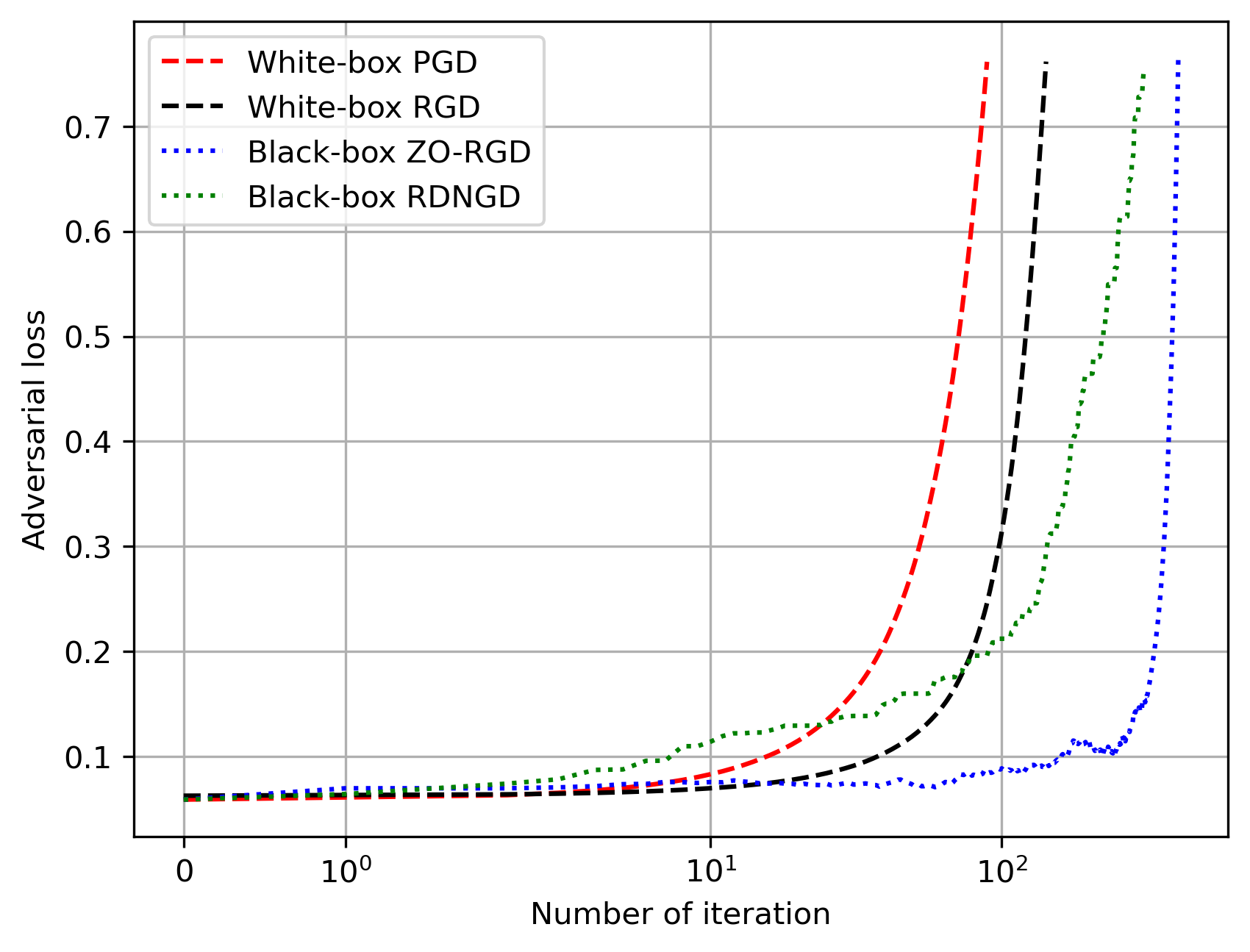}
        \caption{Adversarial Loss vs. Iteration}
        \label{fig:cifar_1_loss_iter}
    \end{subfigure}
    \hspace{0.5cm}
    \begin{subfigure}[b]{0.45\linewidth}
        \centering
        \includegraphics[width=\linewidth]{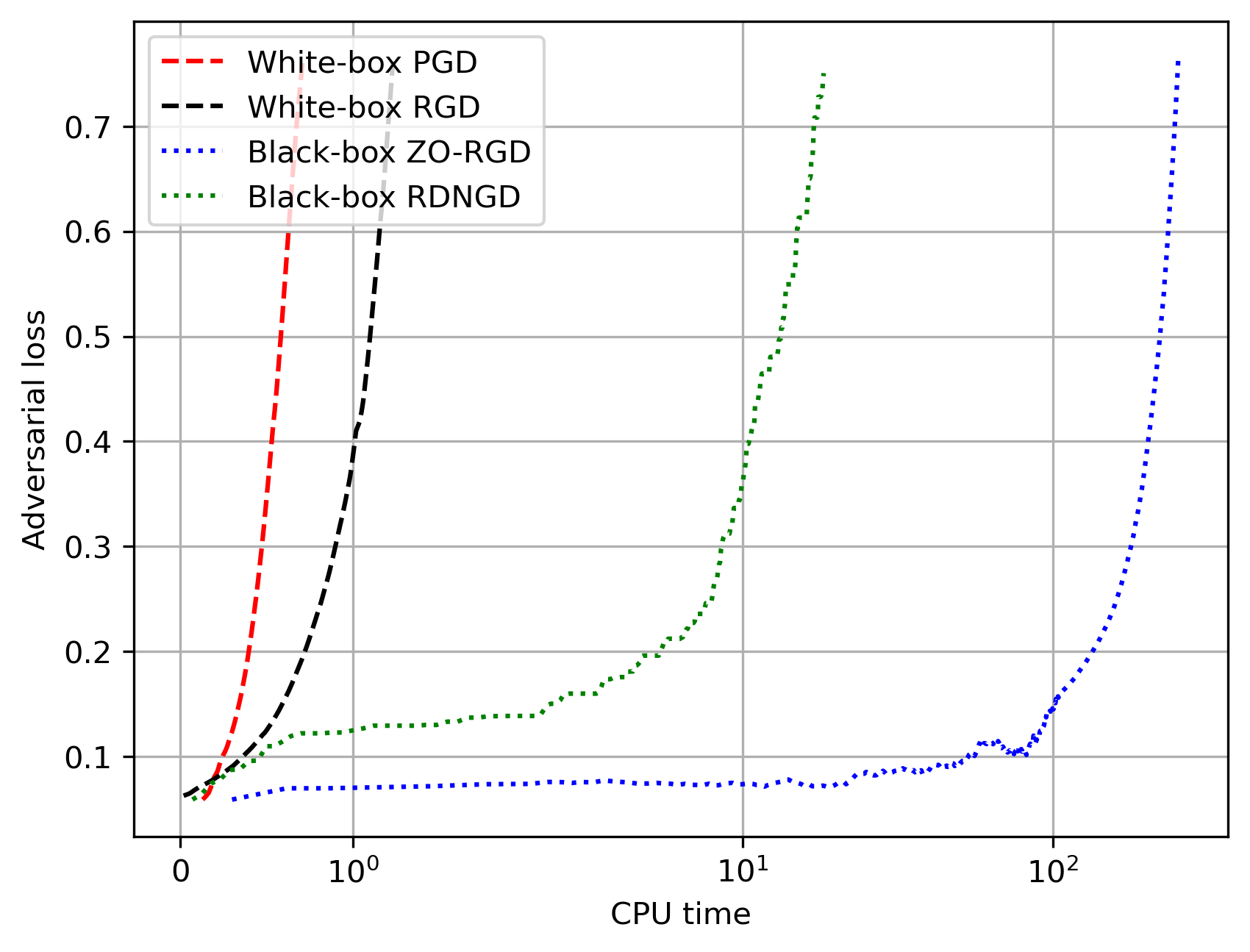}
        \caption{Adversarial Loss vs.  Time}
        \label{fig:cifar_1_loss_time}
    \end{subfigure}

    \caption{Additional attack result on CIFAR-10 (Example 1).}
    \label{fig:cifar_ex1}
\end{figure*}

\begin{figure*}[htbp]
    \centering
    \setlength{\tabcolsep}{1pt}
    \renewcommand{\arraystretch}{0.5}

    \begin{subfigure}[b]{\linewidth}
        \centering
        \begin{tabular}{ccccc}
            \includegraphics[width=0.15\linewidth]{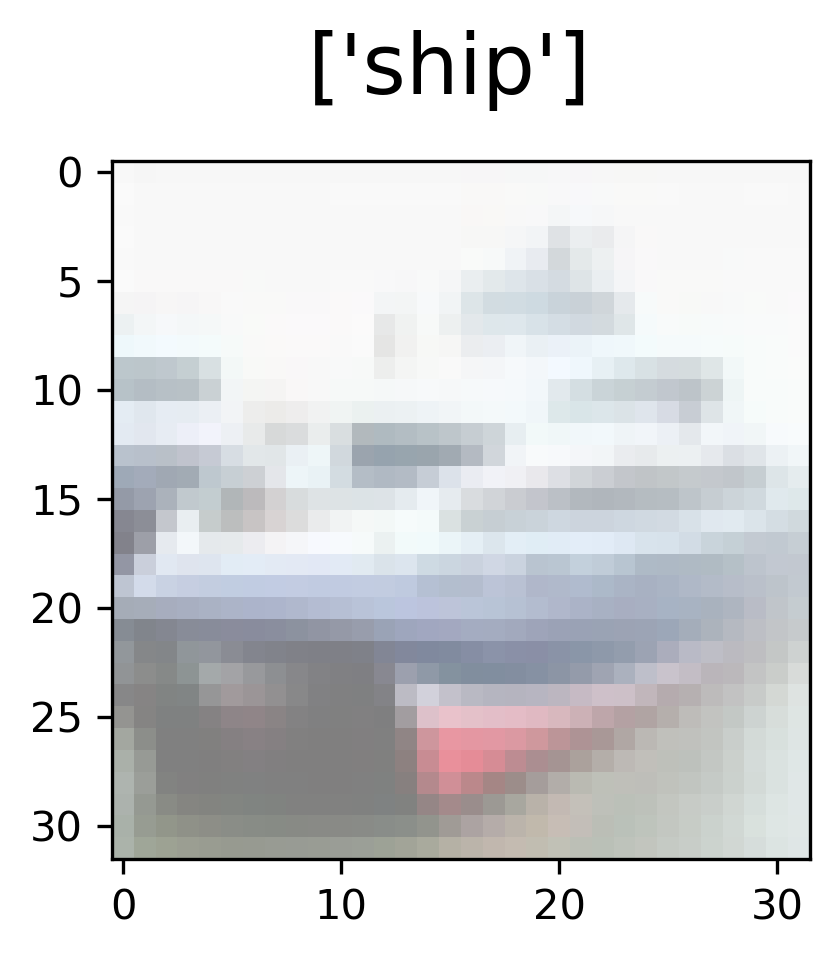} &
            \includegraphics[width=0.15\linewidth]{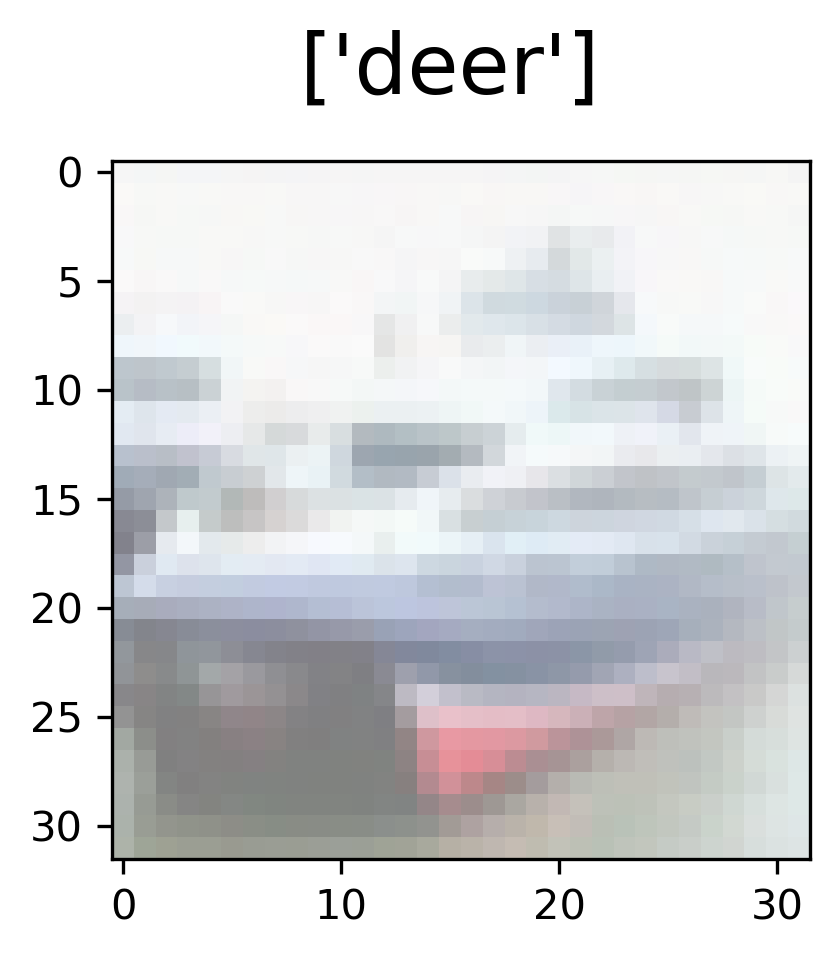} &
            \includegraphics[width=0.15\linewidth]{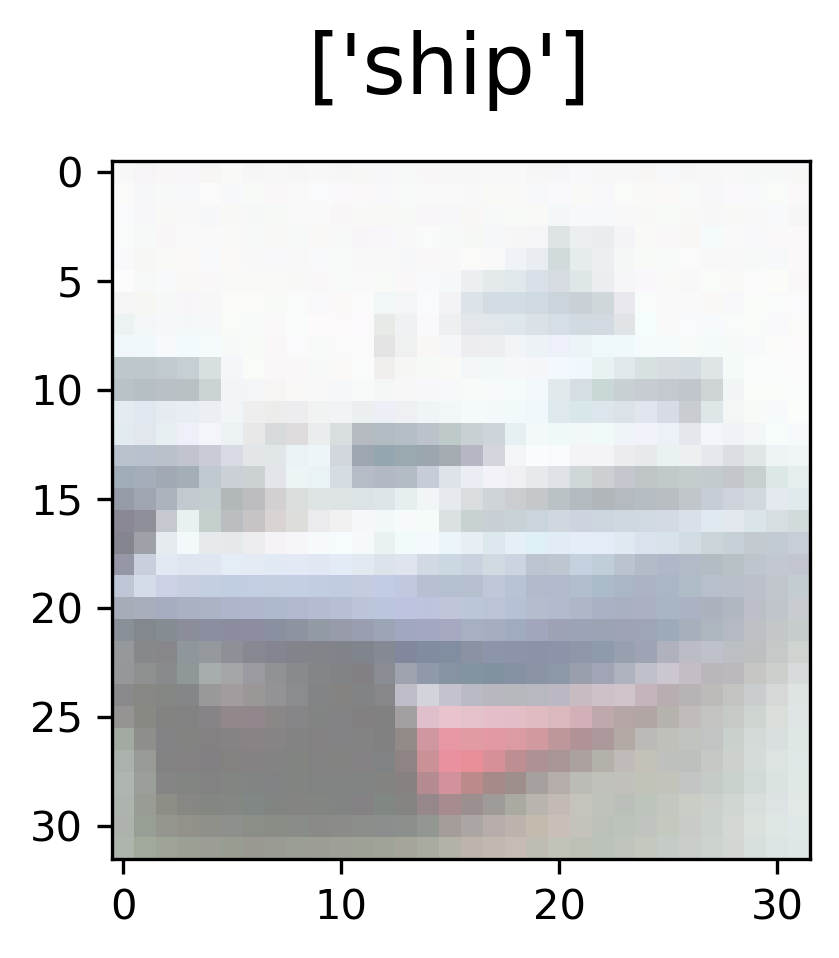} &
            \includegraphics[width=0.15\linewidth]{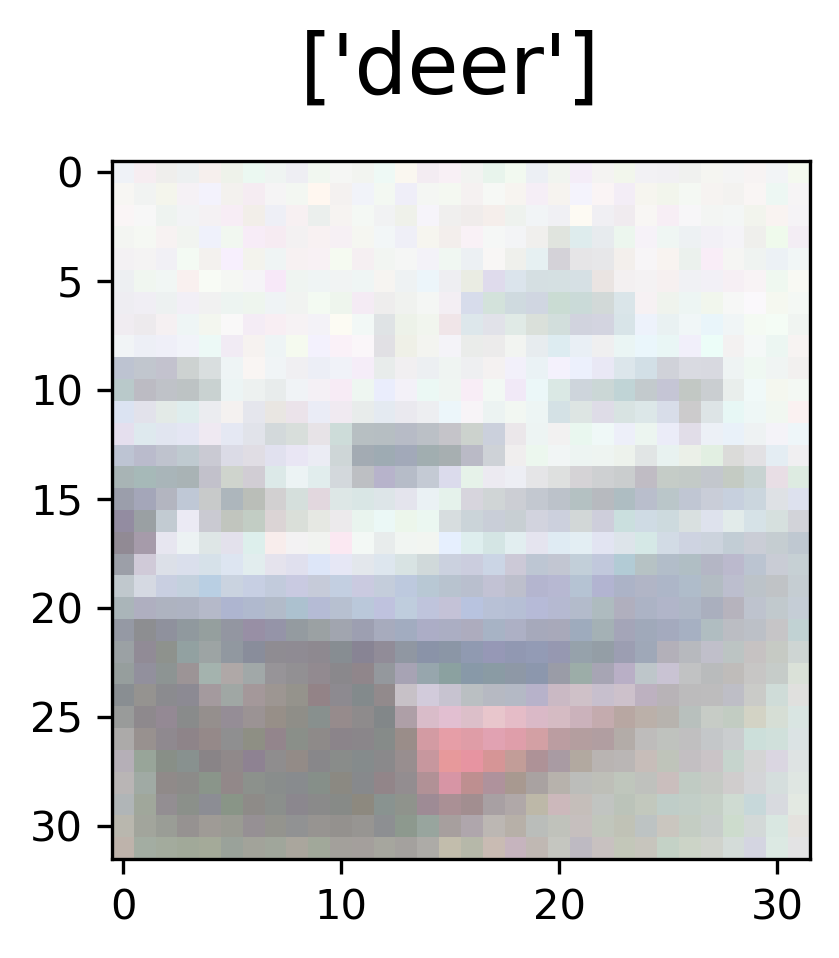} &
            \includegraphics[width=0.15\linewidth]{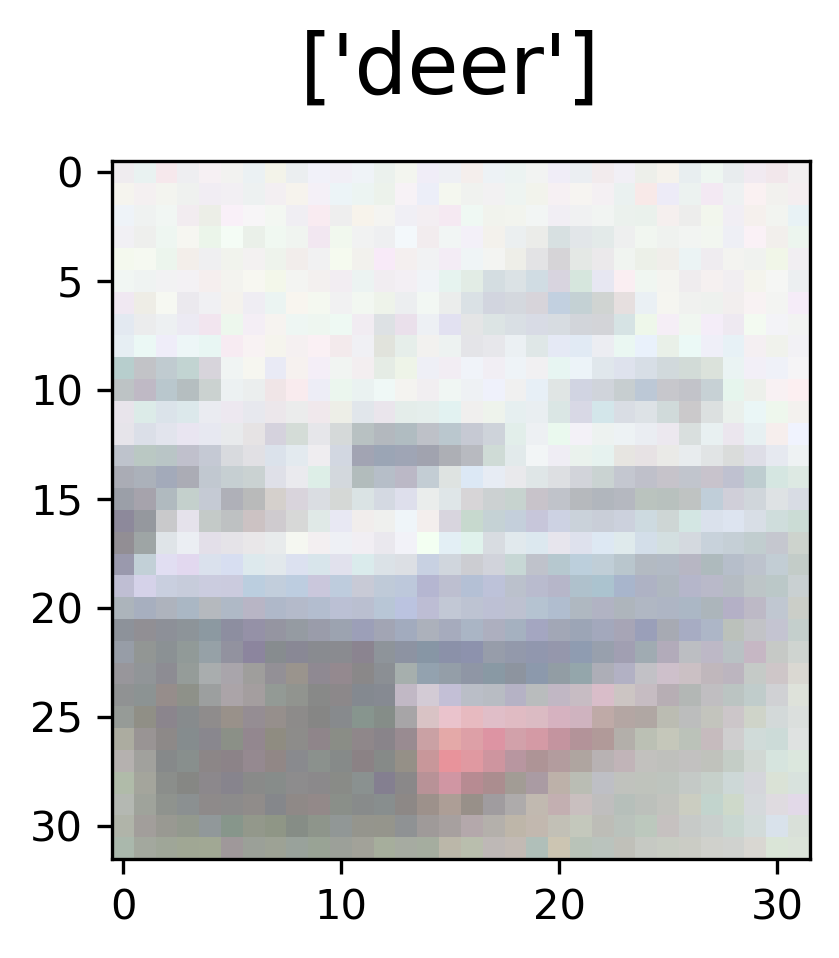} \\
            \tiny Original & \tiny PGD & \tiny RGD & \tiny ZO-RGD & \tiny RDNGD \\
        \end{tabular}
        \caption{Generated adversarial images}
    \end{subfigure}

    \vspace{10pt}

    \centering
    \begin{subfigure}[b]{0.45\linewidth}
        \centering
        \includegraphics[width=\linewidth]{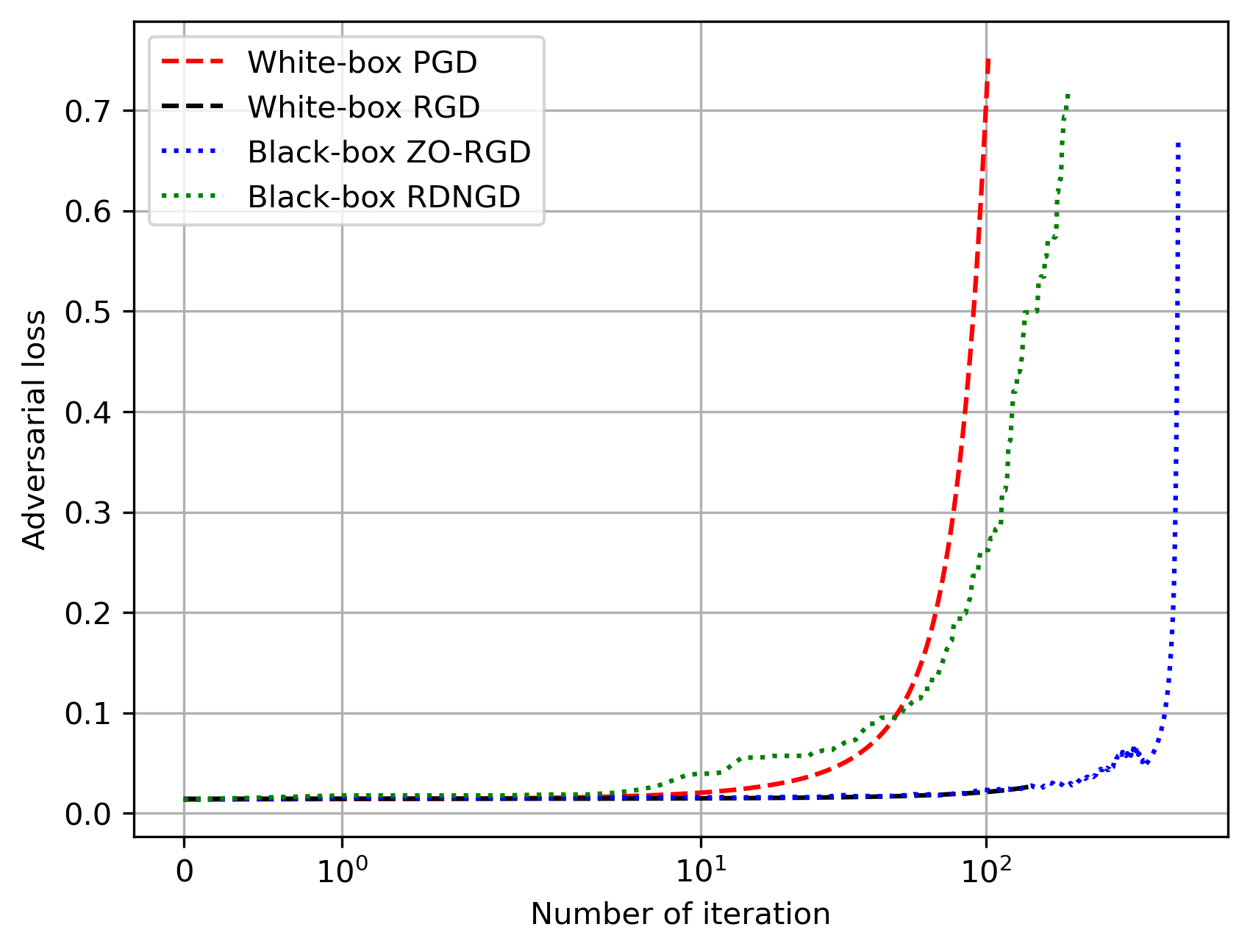}
        \caption{Adversarial Loss vs.  Iteration}
    \end{subfigure}
    \hspace{0.5cm}
    \begin{subfigure}[b]{0.45\linewidth}
        \centering
        \includegraphics[width=\linewidth]{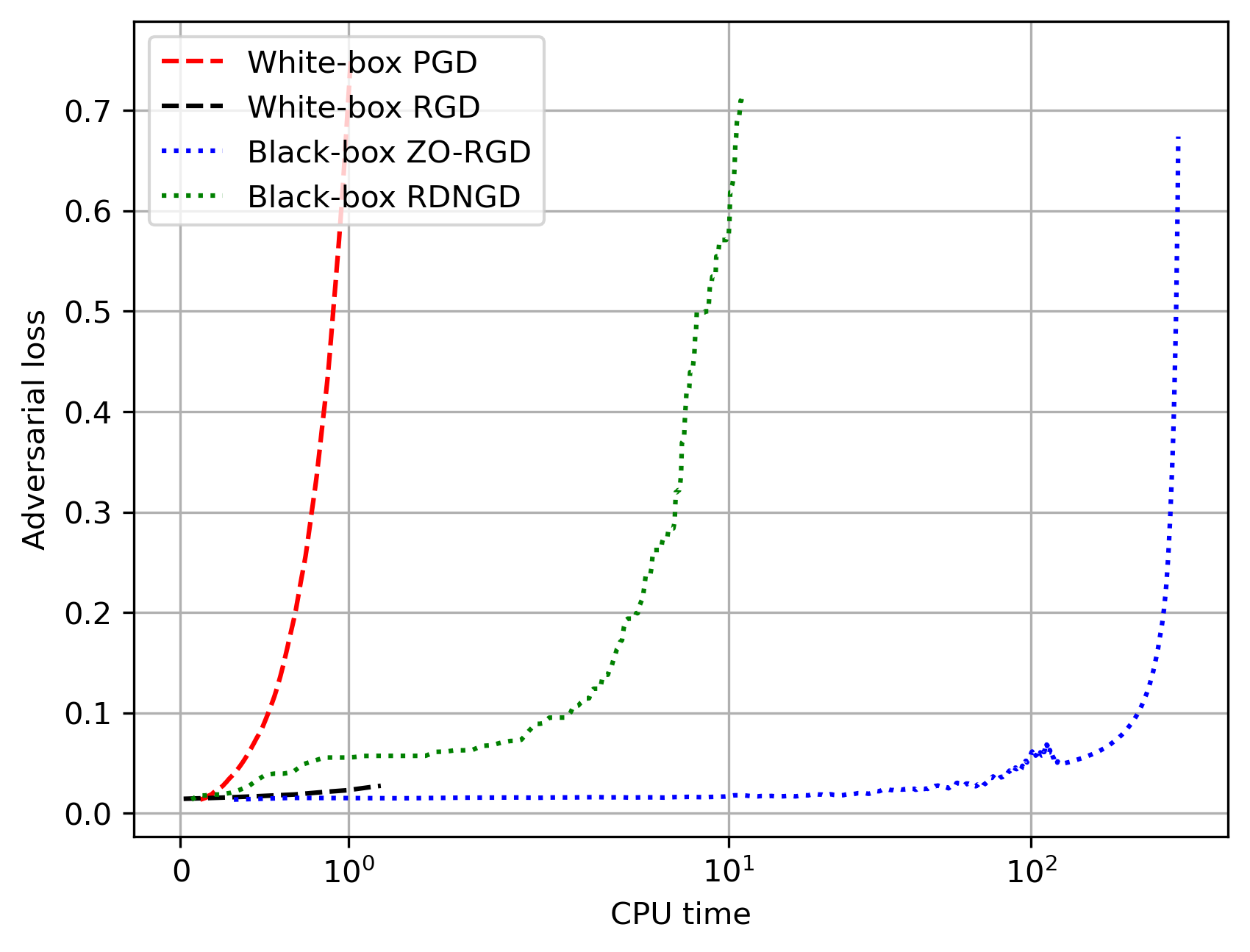}
        \caption{Adversarial Loss vs.  Time}
    \end{subfigure}

    \caption{Additional attack result on CIFAR-10 (Example 2).}
    \label{fig:cifar_ex2}
\end{figure*}

\begin{figure*}[htbp]
    \centering
    \setlength{\tabcolsep}{1pt}
    \renewcommand{\arraystretch}{0.5}

    \begin{subfigure}[b]{\linewidth}
        \centering
        \begin{tabular}{ccccc}
            \includegraphics[width=0.15\linewidth]{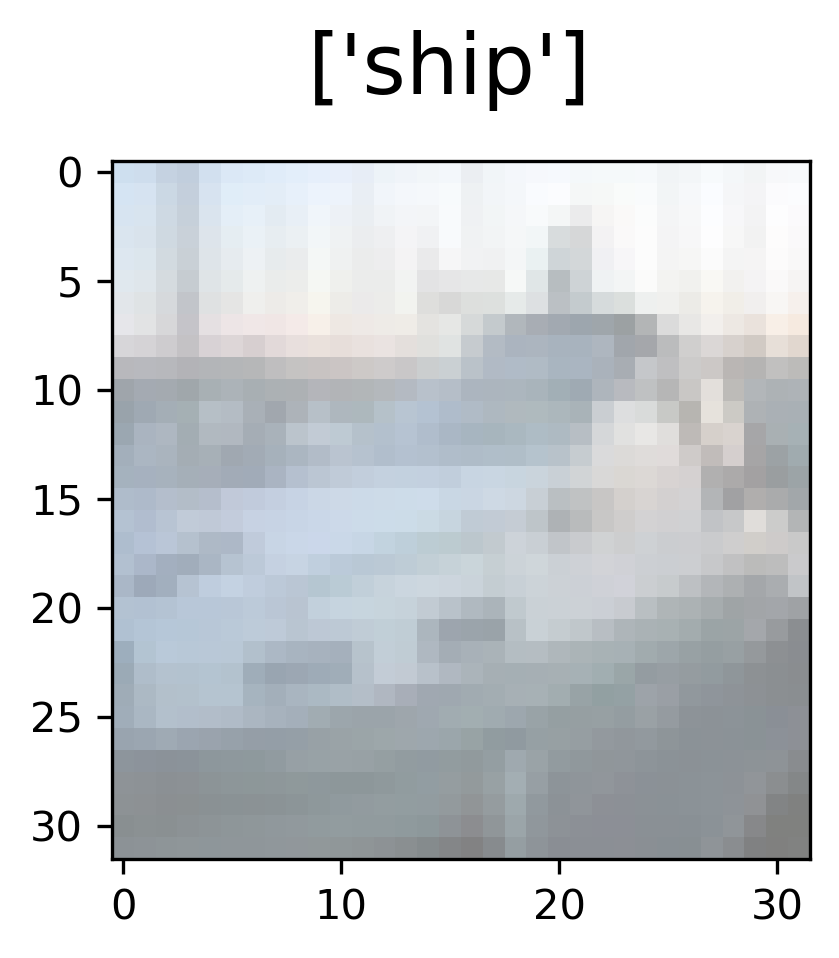} &
            \includegraphics[width=0.15\linewidth]{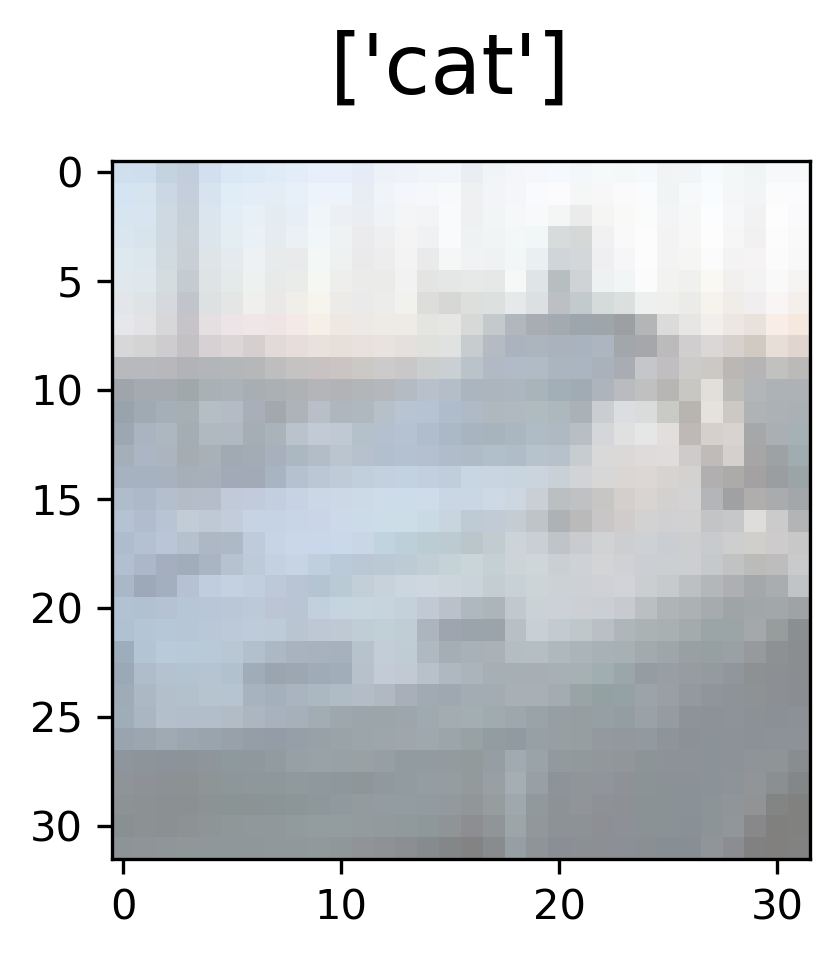} &
            \includegraphics[width=0.15\linewidth]{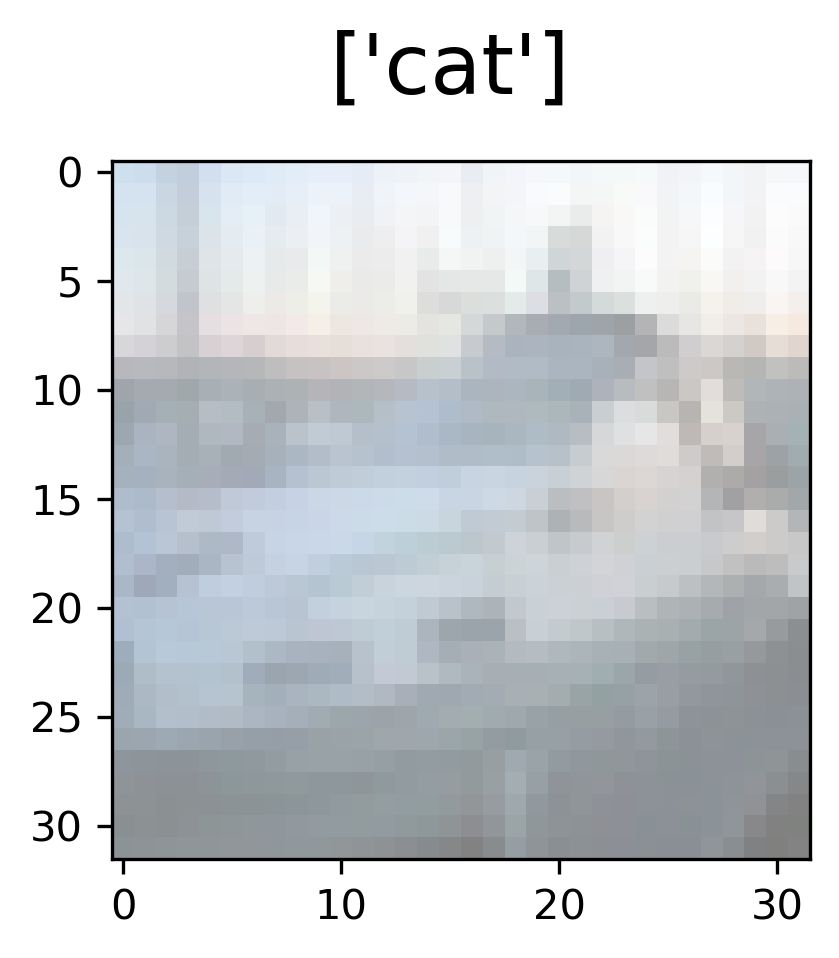} &
            \includegraphics[width=0.15\linewidth]{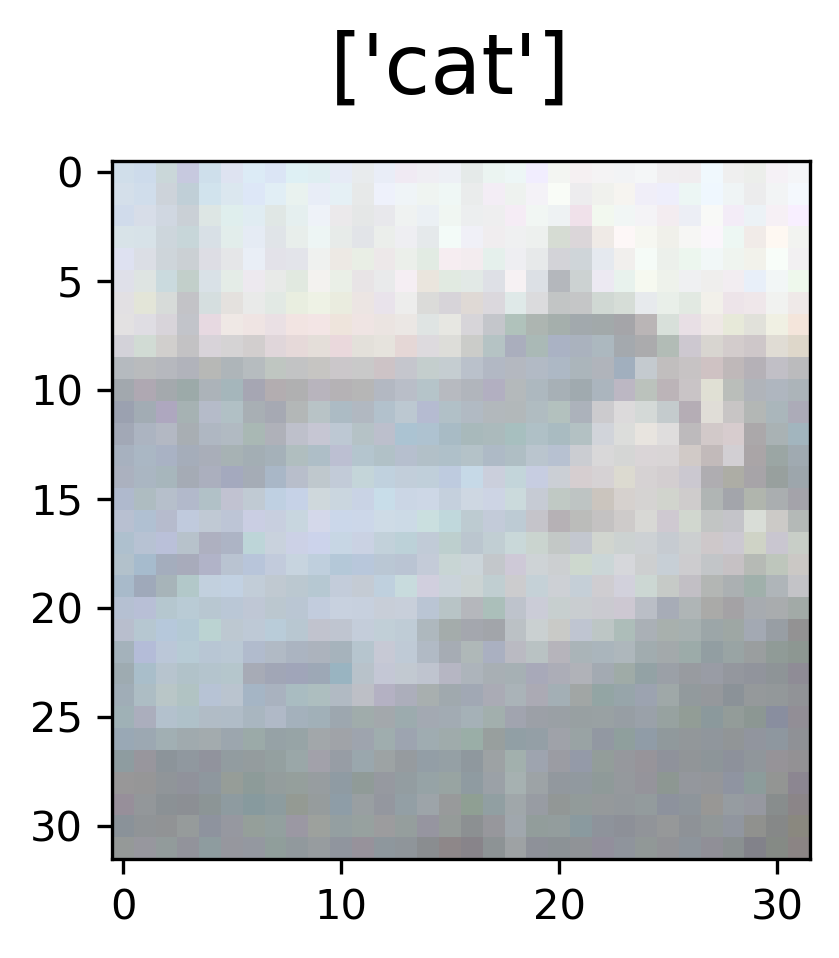} &
            \includegraphics[width=0.15\linewidth]{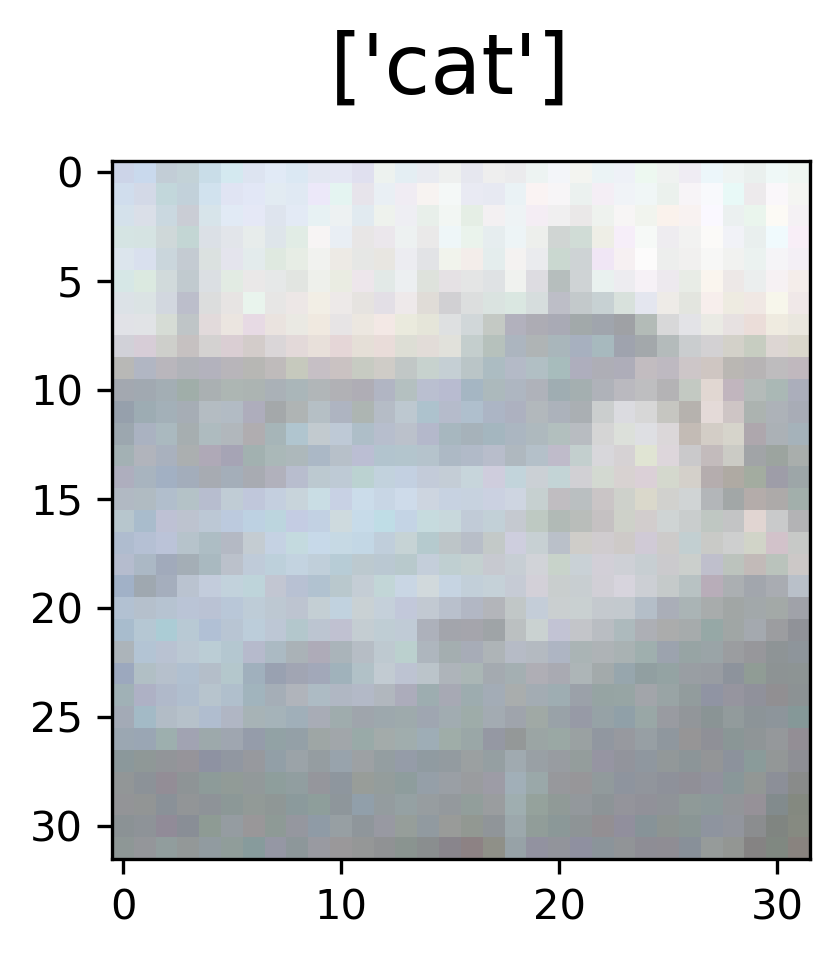} \\
            \tiny Original & \tiny PGD & \tiny RGD & \tiny ZO-RGD & \tiny RDNGD \\
        \end{tabular}
        \caption{Generated adversarial images}
    \end{subfigure}

    \vspace{10pt}

    \centering
    \begin{subfigure}[b]{0.45\linewidth}
        \centering
        \includegraphics[width=\linewidth]{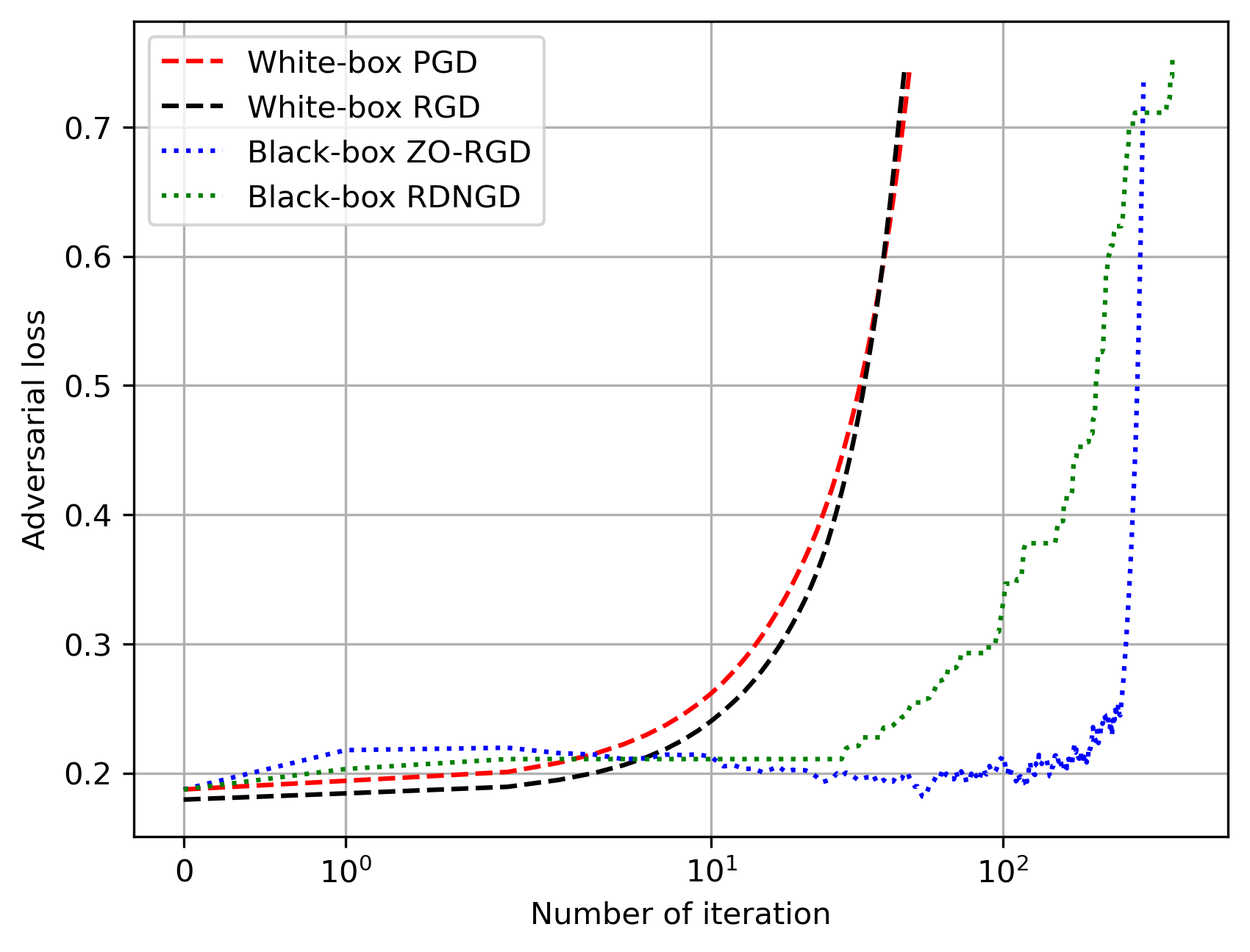}
        \caption{Adversarial Loss vs.  Iteration}
    \end{subfigure}
    \hspace{0.5cm}
    \begin{subfigure}[b]{0.45\linewidth}
        \centering
        \includegraphics[width=\linewidth]{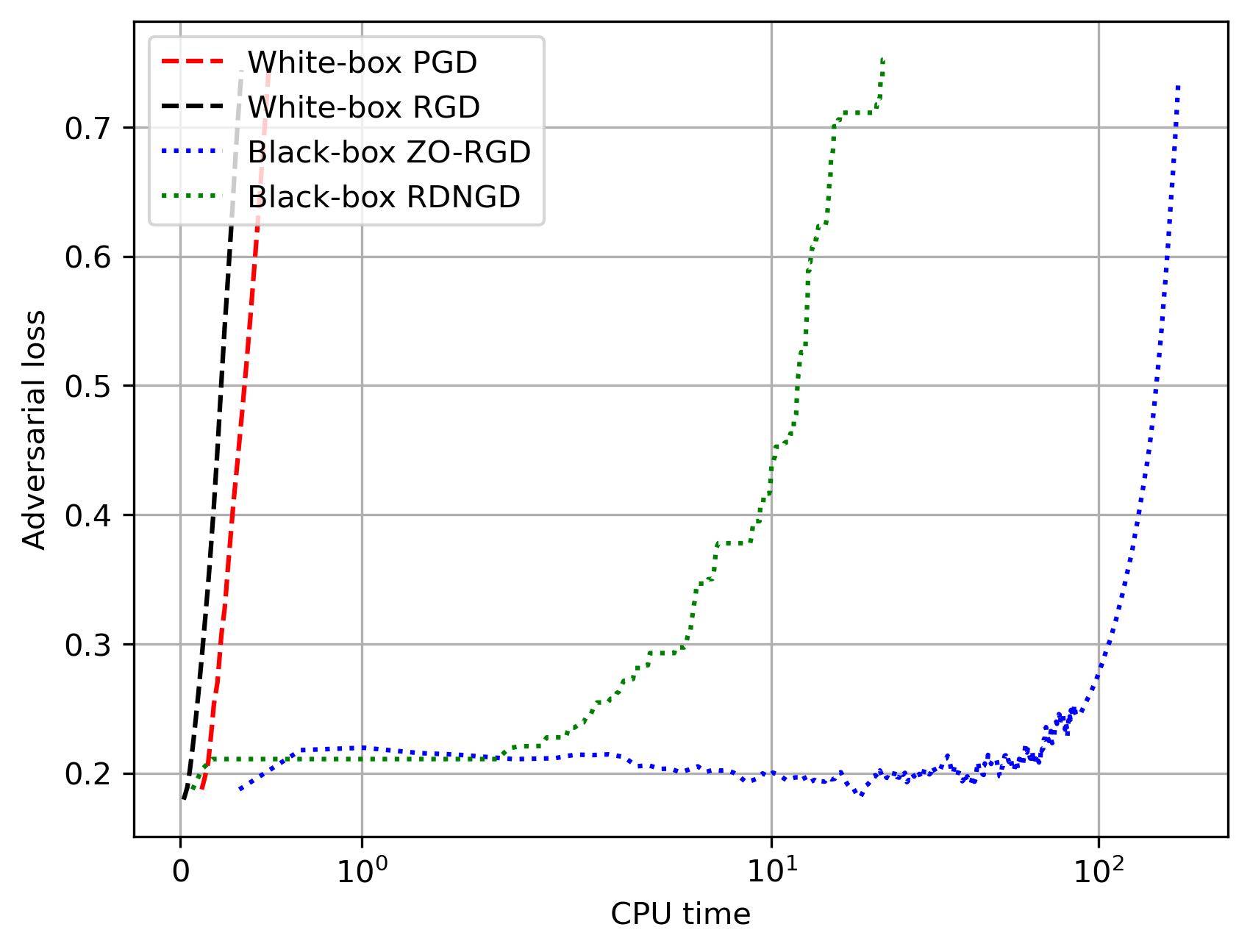}
        \caption{Adversarial Loss vs.  Time}
    \end{subfigure}

    \caption{Additional attack result on CIFAR-10 (Example 3).}
    \label{fig:cifar_ex3}
\end{figure*}



\begin{figure*}[htbp]
    \centering
    \begin{subfigure}[b]{0.32\linewidth}
        \centering
        \includegraphics[width=\linewidth]{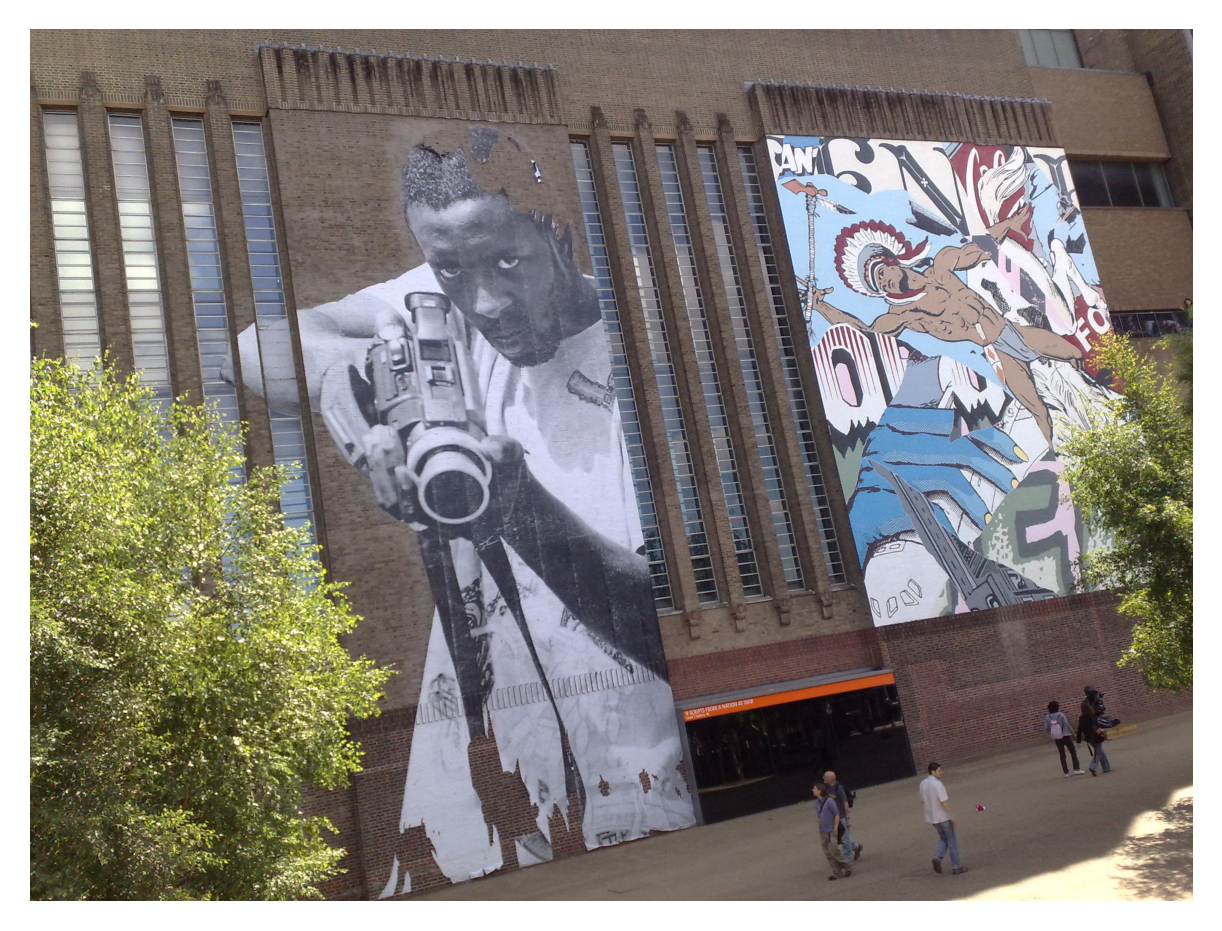}
        \caption{Original Input}
    \end{subfigure}
    \hfill
    \begin{subfigure}[b]{0.32\linewidth}
        \centering
        \includegraphics[width=\linewidth]{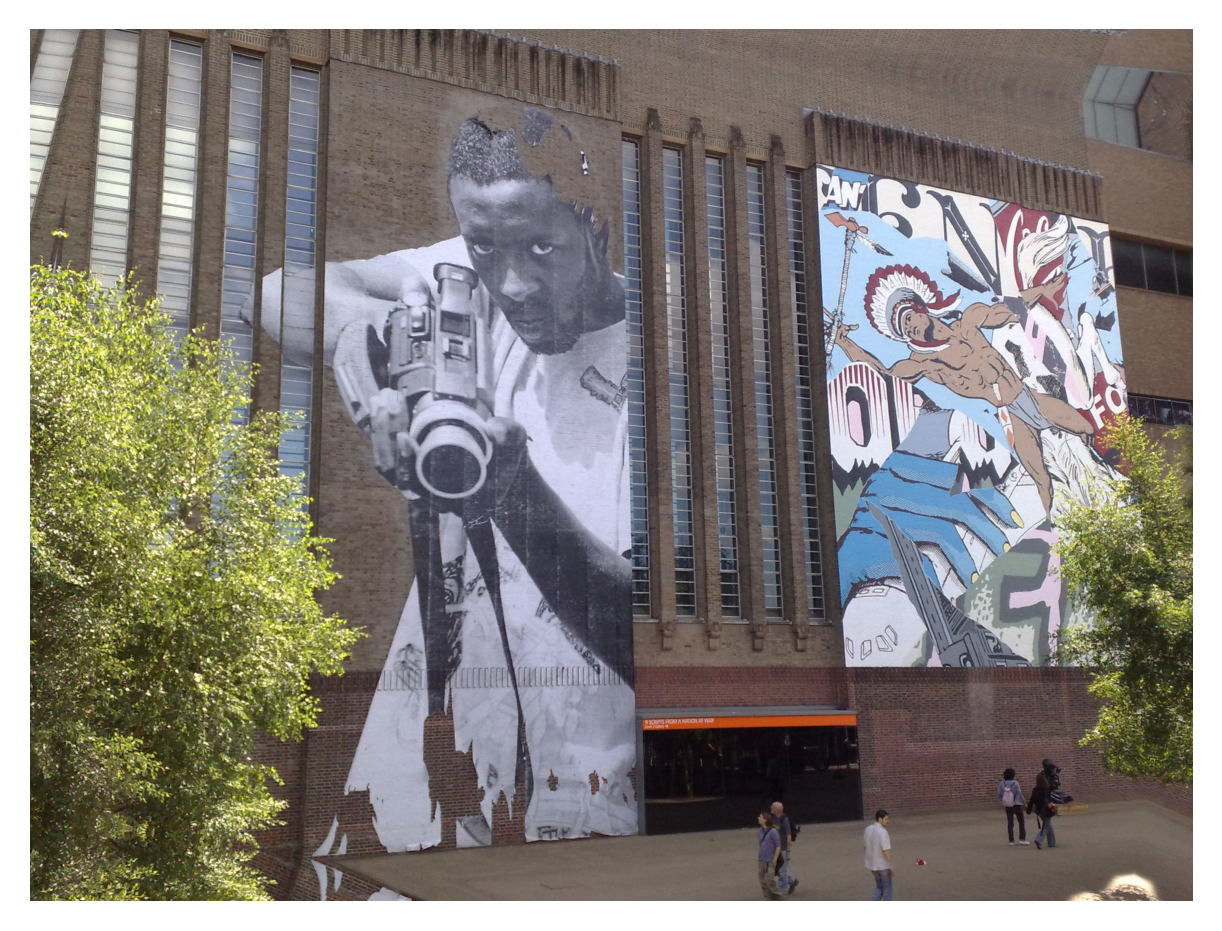}
        \caption{Corrected Result ($-9.17^{\circ}$)}
    \end{subfigure}
    \hfill
    \begin{subfigure}[b]{0.32\linewidth}
        \centering
        \includegraphics[width=\linewidth]{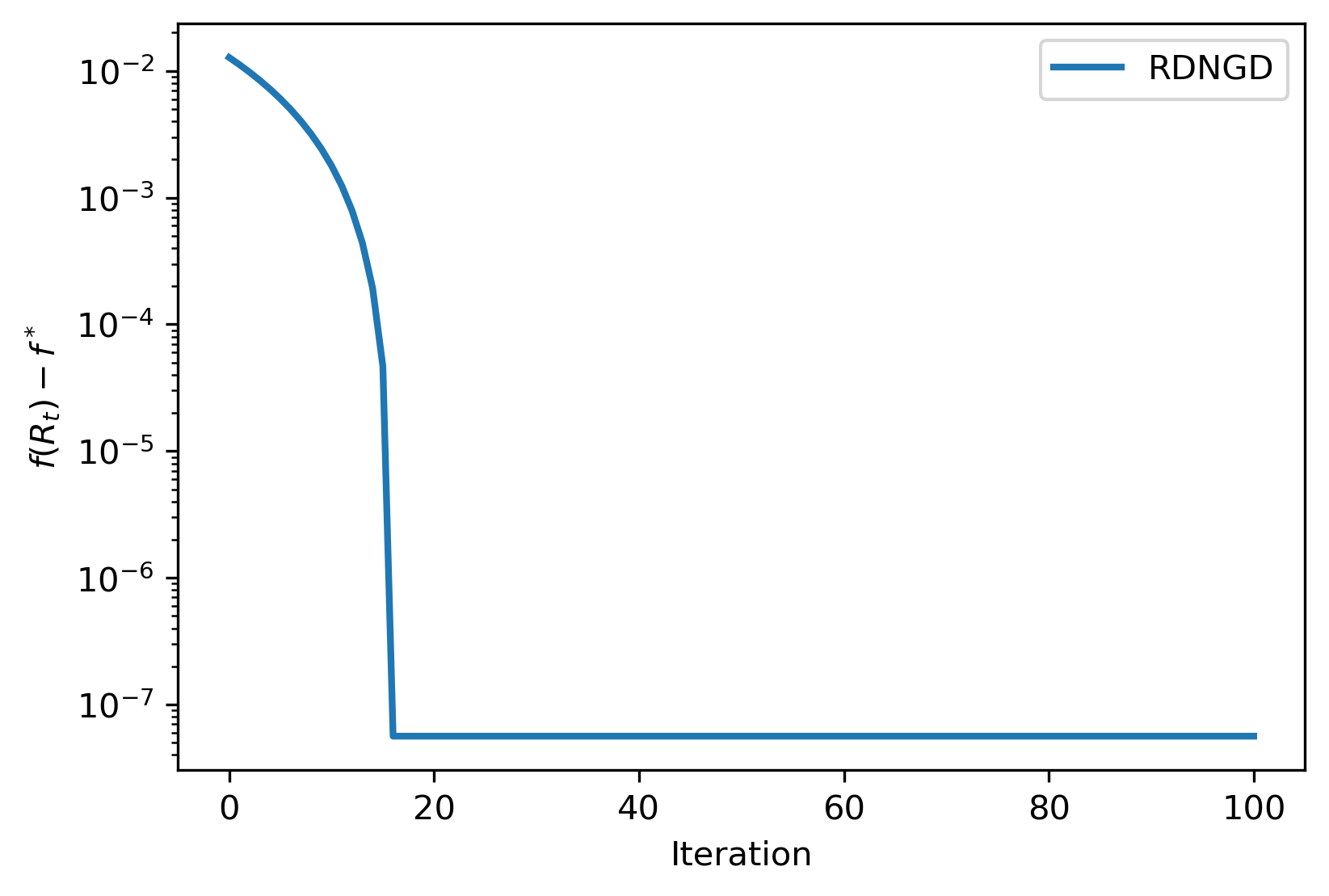}
        \caption{Loss vs Iteration}
    \end{subfigure}

    \caption{Additional horizon leveling result (Example 1).}
    \label{fig:horizon_ex1}
\end{figure*}

\begin{figure*}[htbp]
    \centering
    \begin{subfigure}[b]{0.32\linewidth}
        \centering
        \includegraphics[width=\linewidth]{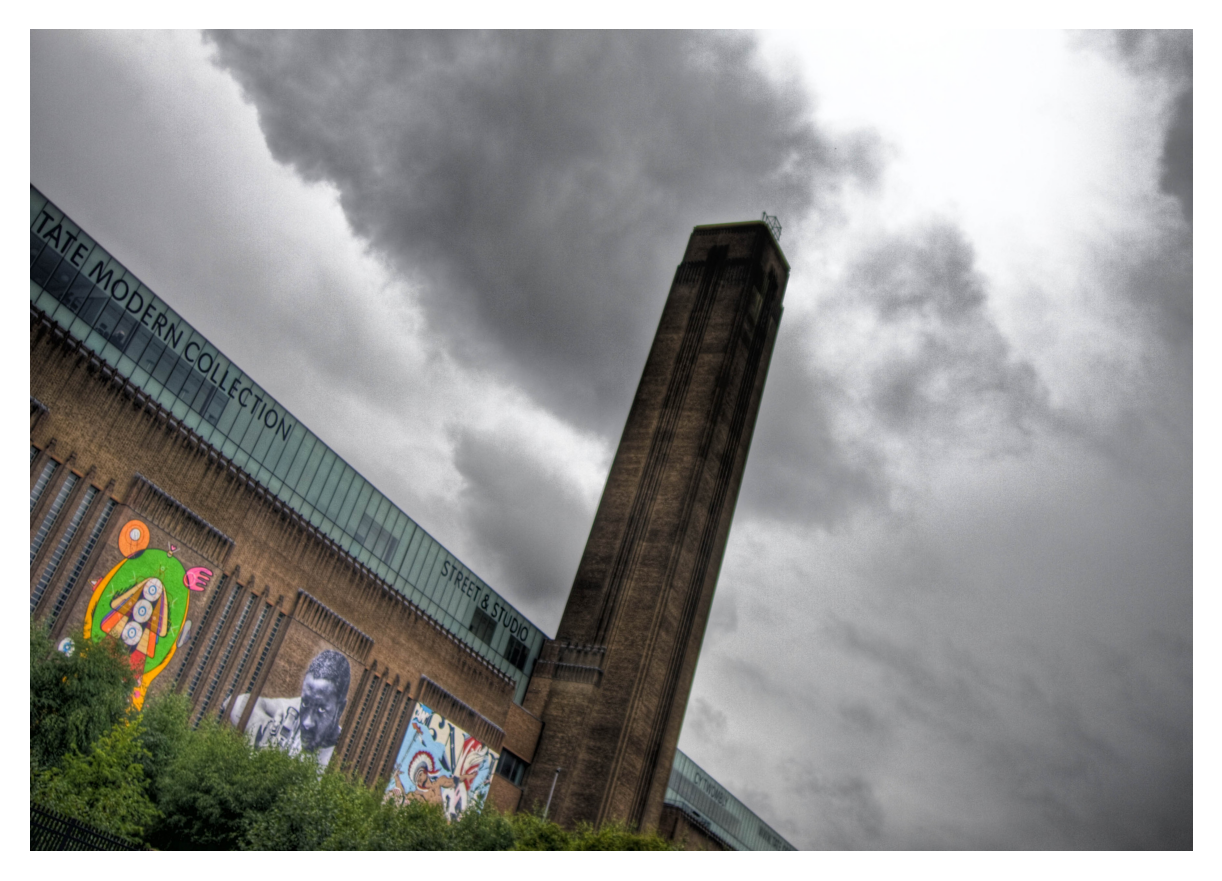}
        \caption{Original Input}
    \end{subfigure}
    \hfill
    \begin{subfigure}[b]{0.32\linewidth}
        \centering
        \includegraphics[width=\linewidth]{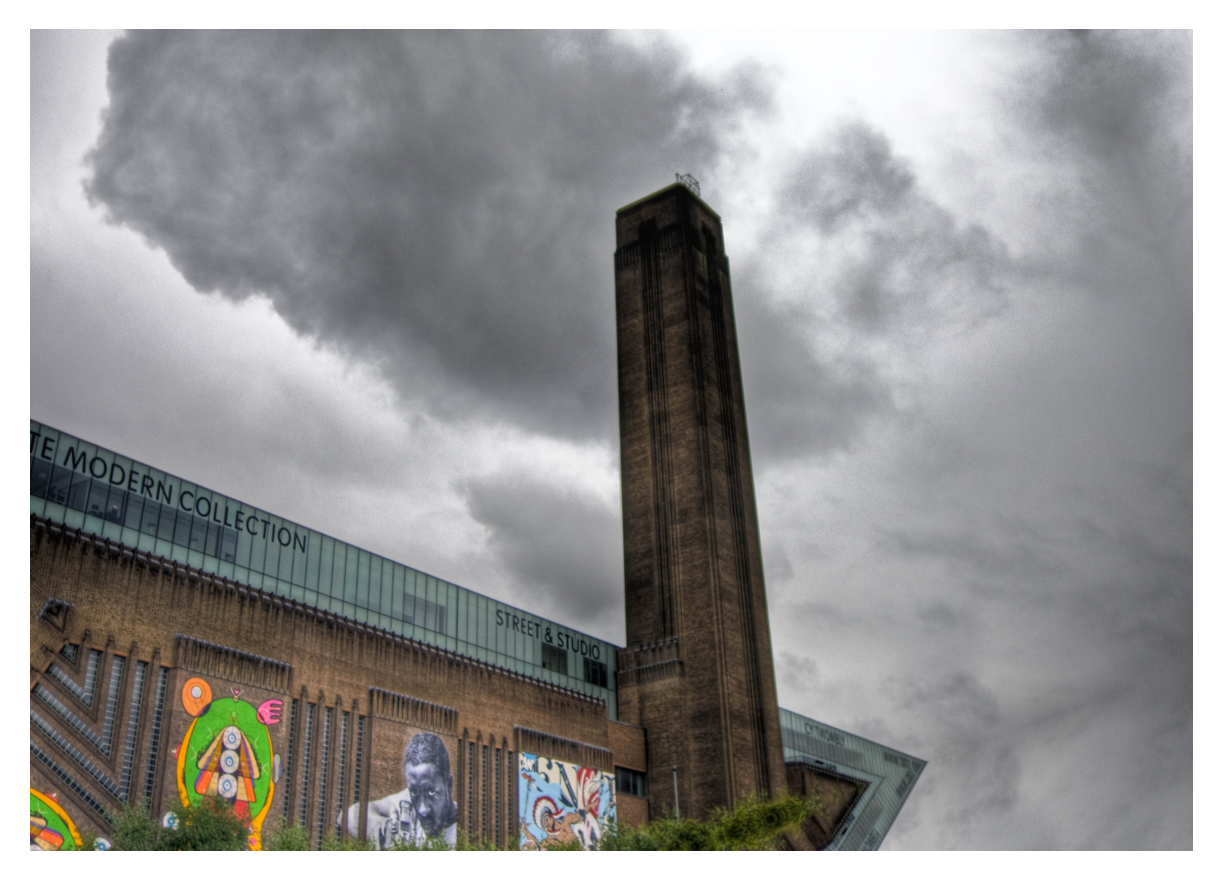}
        \caption{Corrected Result ($20.05^{\circ}$)}
    \end{subfigure}
    \hfill
    \begin{subfigure}[b]{0.32\linewidth}
        \centering
        \includegraphics[width=\linewidth]{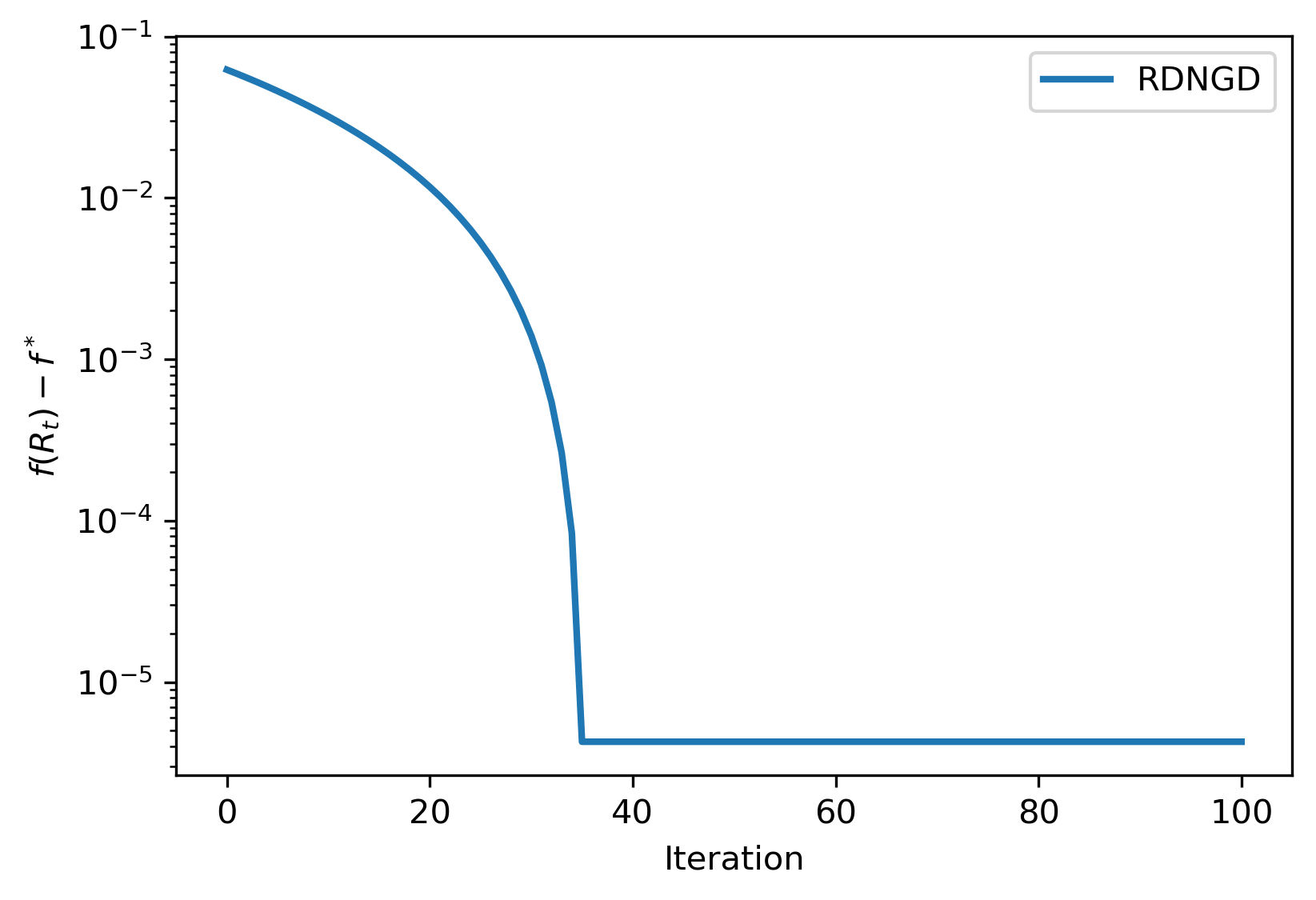}
        \caption{Loss vs Iteration}
    \end{subfigure}

    \caption{Additional horizon leveling result (Example 2).}
    \label{fig:horizon_ex2}
\end{figure*}

\begin{figure*}[t]
    \centering
    \begin{subfigure}[b]{0.32\linewidth}
        \centering
        \includegraphics[width=\linewidth]{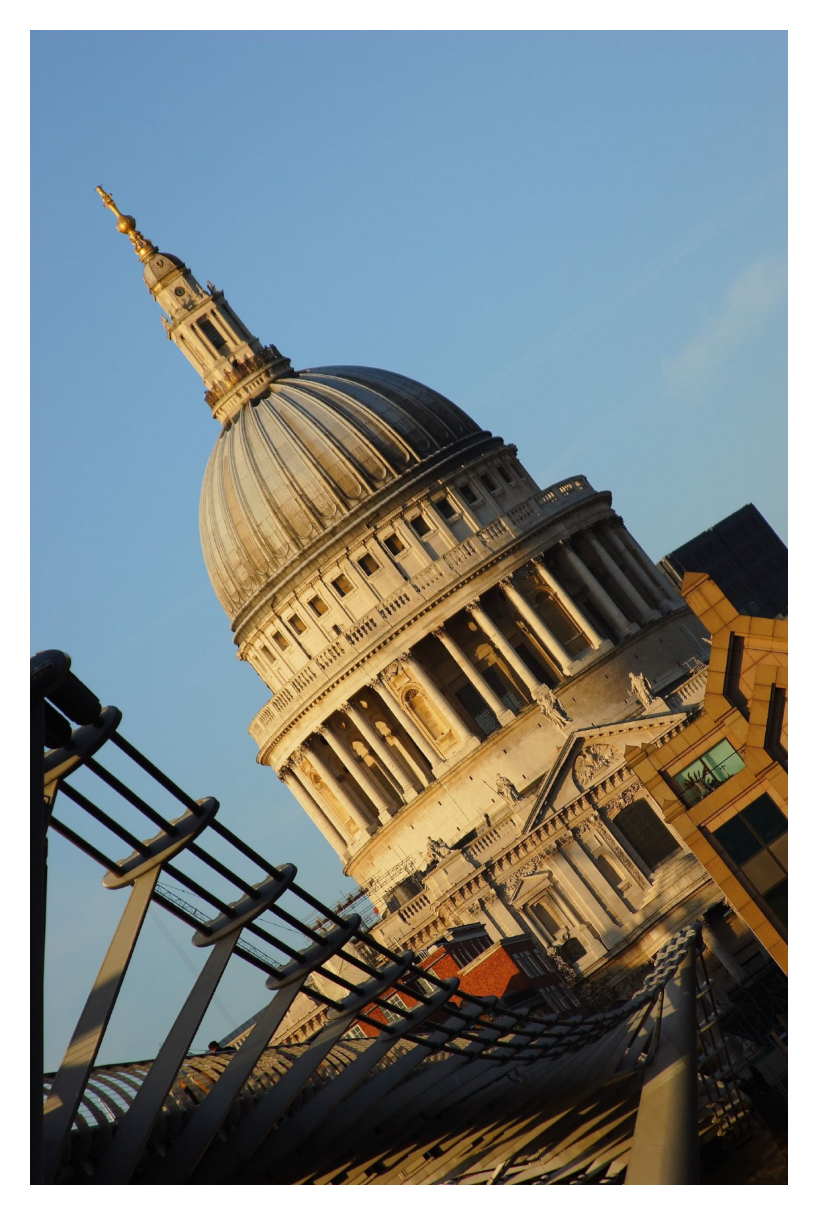}
        \caption{Original Input}
    \end{subfigure}
    \hfill
    \begin{subfigure}[b]{0.32\linewidth}
        \centering
        \includegraphics[width=\linewidth]{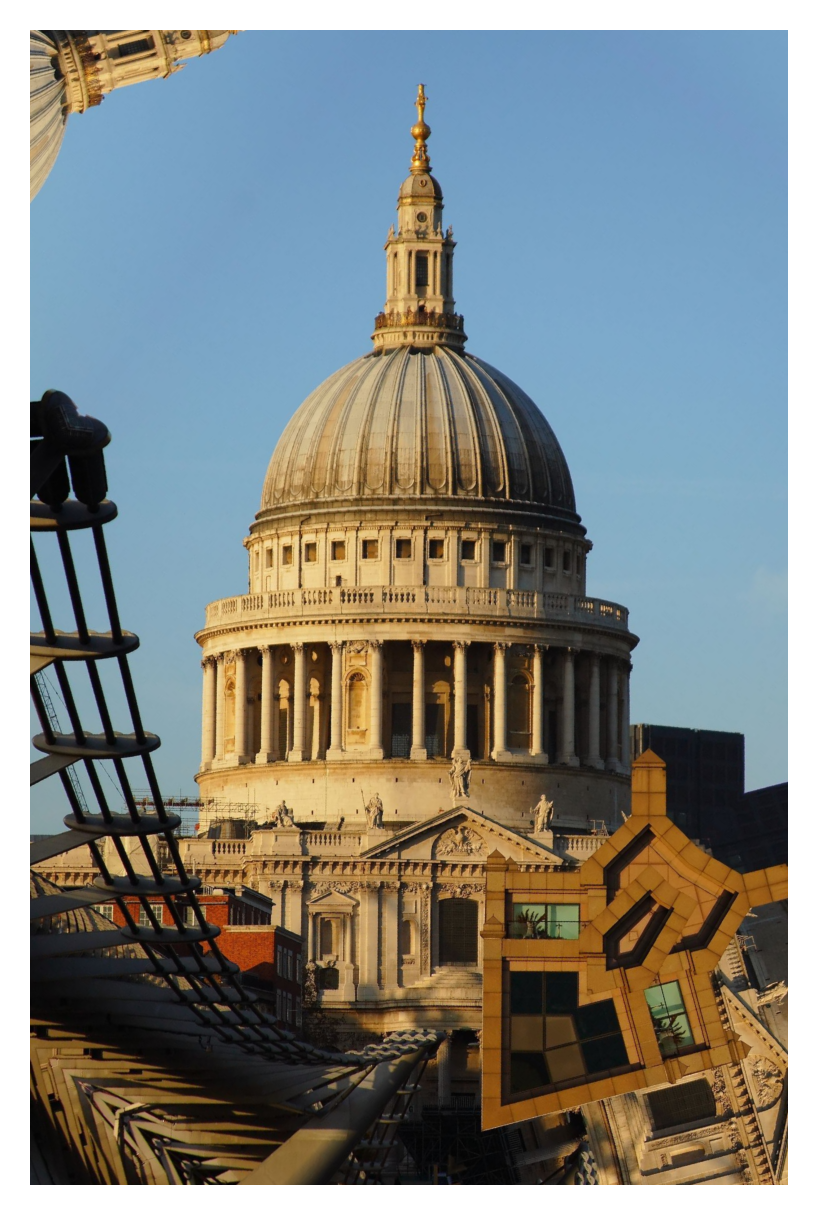}
        \caption{Corrected Result ($-37.82^{\circ}$)}
    \end{subfigure}
    \hfill
    \begin{subfigure}[b]{0.32\linewidth}
        \centering
        \includegraphics[width=\linewidth]{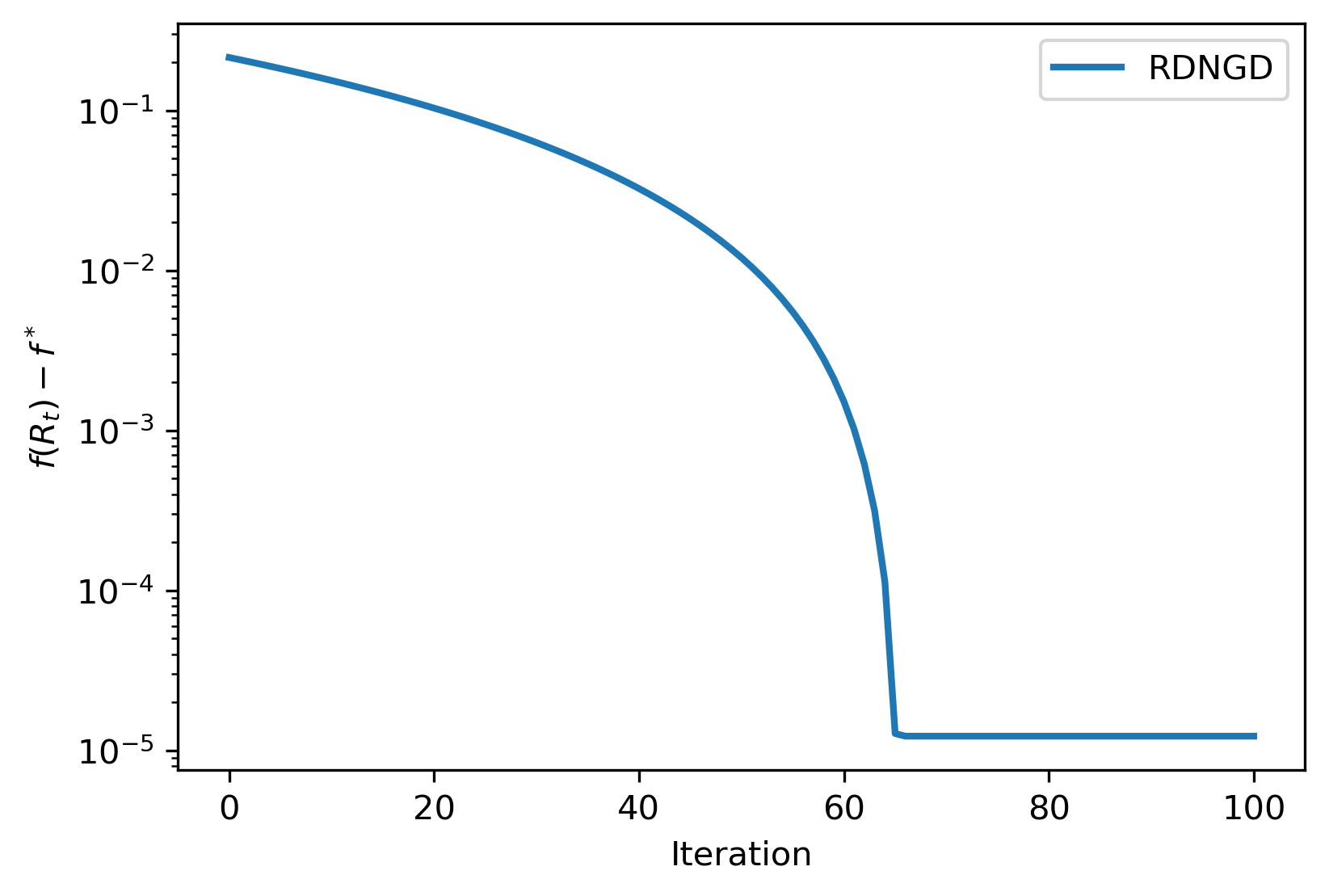}
        \caption{Loss vs Iteration}
    \end{subfigure}

    \caption{Additional horizon leveling result (Example 3).}
    \label{fig:horizon_ex3}
\end{figure*}

\end{document}